\newtheorem{theorem}[subsection]{Theorem} 
\newtheorem{lemma}[subsection]{Lemma}
\newtheorem{corollary}[subsection]{Corollary}
\newtheorem{proposition}[subsection]{Proposition} 
\newtheorem{prop}[subsection]{Proposition}
\newtheorem{example}[subsection]{Example}
\theoremstyle{definition}
\newtheorem{definition}{Definition}[section]
\theoremstyle{remark}
\newtheorem{remark}[subsection]{Remark}
\theoremstyle{hypotheis}
\newtheorem{hypo}{Hypothesis}
\newcommand{\norm}[1]{\lVert#1\rVert }
\definecolor{orange}{rgb}{0.995, 0.75, 0.35}
\definecolor{purple}{rgb}{0.7, 0.2, 0.5}
\definecolor{royalblue}{rgb}{0.2, 0.7, 0.8}
\definecolor{darkgreen}{rgb}{0.2,0.725,0.25}
\newcommand{\be}{\begin{equation}}
\newcommand{\ee}{\end{equation}}
\newcommand{\ba}{\begin{array}}
\newcommand{\ea}{\end{array}}
\newcommand{\bea}{\begin{eqnarray}}
\newcommand{\eea}{\end{eqnarray}}
\newcommand{\beas}{\begin{eqnarray*}}
\newcommand{\eeas}{\end{eqnarray*}}
\def\al{\alpha}
\def\de{\delta}
\def\eps{\epsilon}
\def\ga{\gamma}
\def\lam{\lambda}
\def\om{\omega}
\def\veps{\varepsilon}
\def\vphi{\varphi}
\def\De{\Delta}
\def\Ga{\Gamma}
\def\Om{\Omega}
\def\mx{\mathbf{x}}
\def\mOmega{\mathbf{\Omega}}
\def\mL{\mathbf{L}}
\global\long\def\sgn{\operatorname{sgn}}
\DeclareMathOperator{\dive}{div}%\operatornorm{}
\def\cD{\mathcal{D}}
\def\cE{\mathcal{E}}
\def\cL{\mathcal{L}} 
\def\cM{\mathcal{M}} 
\def\cS{\mathcal{S}}
\def\cW{\mathcal{W}}
\def\cL{\mathcal{L}}
\def\cO{\mathcal{O}}
\def\sH{\mathscr{H}}
\def\sW{\mathscr{W}}
\def\iy{\infty}
\def\inv{^{-1}}
\def\pa{\partial}
\def\sgn{\textrm{sgn}}
\def\To{\Rightarrow}
\def\uga{\underline{\gamma}}
\newcommand{\one}{\mathbf{1}}
\newcommand{\la}{\langle}
\newcommand{\ra}{\rangle}
\newcommand{\nd}{\noindent}
\newcommand{\td}{\tilde}
\newcommand{\wtd}{\widetilde}
\newcommand{\hB}{\hfill$\Box$}
\newcommand{\hr}{\hookrightarrow}
\newcommand{\cob}[1]{{\color{blue}#1}} 
\newcommand{\cor}[1]{{\color{red}#1}}
\newcommand{\crr}{{\color{red}$\dagger$}}
\newcommand{\drr}{{\color{red}$\ddagger$}}
\newcommand{\rk}{{\bf Remark.}\ \ }
\newcommand{\Z}{\mathbb{Z}}
\newcommand{\R}{\mathbb{R}}
\newcommand{\C}{\mathbb{C}}
\newcommand{\N}{\mathbb{N}}
\def\sideremark#1{\ifvmode\leavevmode\fi\vadjust{\vbox to0pt{\vss% the remark
 \hbox to 0pt{\hskip\hsize\hskip1em%                          will appear only
\vbox{\hsize2cm\tiny\raggedright\pretolerance10000%          on the side
 \noindent #1\hfill}\hss}\vbox to8pt{\vfil}\vss}}}%
\newcommand{\edz}[1]{\sideremark{\color{blue}#1\color{black}}}
\title[magnetic NLS equation]{Existence and non-existence of ground state solutions for  magnetic NLS} % with an anisotropic potential}
\author{Oleg Asipchuk} 
 \address[Oleg Asipchuk]{Department of Mathematics and Statistics\\
  Florida International University\\
%11200 SW 8th Street, DM 430
Miami, FL 33199}
\email{aasip001@fiu.edu}
\author{Christopher Leonard} 
 \address[Christopher Leonard]{Department of Mathematics\\
%  %2108 SAS Hall Box 8205
 North Carolina State University\\
% %Campus Box 8205\\
 Raleigh, NC 27695}%North Carolina
\email{cleonar@ncsu.edu}
\author{Shijun Zheng}
 \address[Shijun Zheng]{Department of Mathematical Sciences\\
 Georgia Southern University\\
 Statesboro, GA 30460-8093}
\email{szheng@GeorgiaSouthern.edu}
\begin{document} 
\maketitle

\begin{abstract}   
{We show the  existence and stability of ground state solutions (g.s.s.)
for $L^2$-critical magnetic nonlinear Schr\"odinger equations (mNLS) for a class of unbounded electromagnetic
potentials.  We then give non-existence result by constructing 
a sequence of vortex type functions in the setting of RNLS with 
an anisotropic harmonic potential.  
These generalize the corresponding results in \cite{ANS19stabi} and \cite{Dinh22mag3Drev}.  
The case of an isotropic harmonic potential for rotational NLS has been recently addressed in \cite{BHHZ23t}.  
Numerical results on the ground state profile near the threshold are also included. 
}
 \end{abstract}

\setcounter{tocdepth}{1} % Show sections

\tableofcontents    % good source: https://latex-tutorial.com/tutorials/table-of-contents/

%\addcontentsline{toc}{section}{Unnumbered Section}      Options for the table of lists 
%\section*{Unnumbered Section} 

% \listoffigures 
% \listoftables

%Globally change the depth with \setcounter{tocdepth}{X}; X = {1,2,3,4,5}
%For single sections use \addtocontents{toc}{\setcounter{tocdepth}{X}} instead

\section{Introduction}\label{s:introAV} 

 The  magnetic nonlinear Schr\"odinger equation  (mNLS) in $\R^{1+d}$ reads
\begin{equation}\label{eqNls_AV}
	i\psi_t=-\frac{1}{2}(\nabla-iA)^2 \psi+ V \psi +N(x,\psi)
	\end{equation}
 with initial condition
\begin{equation*}
    \psi(0, x) = \psi_0(x),\quad x\in\R^d\,.
\end{equation*}
Here $(A,V)$ is the real-valued electromagnetic potential such that 
$A=(A_j)_1^d: \R^d\to \R^d$ is {\em sublinear} and $V$: $\R^d\to\R$ is {\em subquadratic}, 
that is,
$A\in C^\iy(\R^d,\R^d)$    and $V\in C^\iy(\R^d)$ satisfy  
$\pa^k A_j\in L^\iy$ for all $|k|\ge 1$  %super-linear if $\nabla V$ is unbdd.
 and $\pa^k V\in L^\iy$ for all $|k|\ge 2$ respectively. 
The two-form  $B=dA=(b_{jk})$, $b_{jk}=\pa_j A_k-\pa_k A_j$ denotes the 
magnetic field induced by the vectorial potential $A$.
The term  $N=N(x,\psi)$ has power-type nonlinearity. 
For the early physical background of the mNLS (\ref{eqNls_AV}),  
see  %\cite{LandauLefschitz53}  
\cite{AHS78mag,COMBES1978mNLS}  
and the references therein.    

We will primarily consider 
the existence problem of ground state solutions (g.s.s.) of the elliptic equation
\begin{equation}\label{eq:AV} 
 -\lam u=-\frac{1}{2} \nabla_A^2 u+ V(x)u +N(x,u)\,,
	\end{equation} 
where $\nabla_A:=\nabla -iA$ denotes the co-variant derivative.
The operator $\cL=H_{A,V}:=-\frac12\nabla_A^2+V$ is essentially selfadjoint in $L^2$ with the core $C_0^\iy(\R^d)$. 
%The usual power nonlinearity $N(u)=\mu(x) |u|^{p-1}u 
This is the so-called Euler-Lagrange equation associated to Eq. (\ref{eqNls_AV}),
which can also be derived from the ansatz $\psi=e^{i\lam t} u(x)$, a standing wave solution of
the mNLS (\ref{eqNls_AV}).

The studies on the existence, stability and dynamics of ground state solutions for mNLS 
 have been motivated
by captivating interactions  and applications in  atomic and molecular physics, geometric optics,  
%quantum vorticity 
 spinor Bose-Einstein condensation (BEC),  quantum plasma and fluids.  %superfluidity, superconductor 
Recent decades have seen numerous theoretical  and computational methods developed in the field
\cite{AftalionAB05gvortex,AntDub2014gs,Aristov99mag-exact,
BaoCai2013num,BaoCai2017spinor,GaZh2013a,LewinNamRou2014mNLS,
SanSer2000magQ,Schmi2019magPauli,Zongo23Q_AB}, 
%\footnote{\href{https://www.nobelprize.org/prizes/physics/2023/press-release/}{light pulse in attoseconds}  } 
esp. around the stationary RNLS (\ref{eU:Om-ga}) and \eqref{RNLS_Omga} of type $(\Om,\ga)$. 
When $d=3$,  $A(x)=\Om\la-x_2,x_1,0\ra$ %$V(x)=0$, 
and $V_\ga(x)=\frac{\Om^2}{2}(x_1^2+ x_2^2)$, namely, $\ga=\Om (1,1,0)$,
the existence of g.s.s. and their stability were initially studied in  
 \cite{CazEs88,EsLion89}  for $p\in [1,5)$, 
 also cf.  \cite{Goncalves91instabiQ}.  
  Recent  studies have been conducted in 
\cite{ANS19stabi,Dinh22rmk,
BHHZ23t,LeoZ22n} for  $(\Om, \ga)$ of  anisotropic type, which have abundant physics motivations,
see e.g., \cite{Sowiski04rotrap,SmithDMS22rot-stats-ferm}.  
 
 Our first main theorem (Theorem \ref{T-Existence}) is to show the existence of g.s.s. for the 
 general magnetic NLS \eqref{eq:AV}   %Theorem \ref{T-Existence}
 in the  focusing $L^2$-critical case  $p = 1 + \frac{4}{d}$.  %when the nonlinearity is of the form $\mu |\psi|^{p-1} \psi   
Throughout the article, we assume $A$ sublinear and $V$ subquadratic so that  
$ |A(x)|\le \al_0 (|V(x)|+1)^{1/2}$ for some $\al_0>0$. 
% probe EM vortex Q in geometric optics, light wave  detect plasmon resonance, photon, magnon 

{We begin with stating the variational problem corresponding to (\ref{eq:AV}). 
The energy $E_{A,V}$ associated to (\ref{eq:AV})  is given by 
\begin{align} 
   E_{A,V}(u)=& \int \bar{u}\cL u\, dx + \int G(x,|u|^2) dx\,, \label{EAV:Q}
\end{align}
where  $G(x,|u|^2)=2\int_0^{|u|} N(x,s)ds$ is real-valued, 
viz., $N(x,u)=G'(x,|u|^2)u$ with 
$G'(x,z):=\pa_z G(x,z)$. %denoting the partial derivative in $z$.
%\crr Alternatively based on the formulation for nonlinearity \eqref{Eg:u-v^p} and \eqref{eg:nonlinear}
%as given in \cite[Eq. (10)]{CazE88}: with our notation it is $-g(x,u)\leftrightarrow 
%&-{G}_{Z}(x,|u|^2)=-2G_{CE}(x,u)=-2\int_0^{|u|}g(x,s)ds\quad a.e. \;x \in \R^N\\
%\To&g(x,u)=G'_{u,Z}(x,|u|^2)u  
Denote  $\Sigma=\sH^1$  the weighted Sobolev space endowed with  the norm 
\begin{equation}
\norm{u}_{\Sigma}=\left(\int |\nabla u|^2 + (1+|A|^2+|V|) |u|^2 \right)^{1/2}\,.
 % =(\int |\nabla u|^2 + (|V(x)| +1) |u|^2 )^{1/2}.
\end{equation}  % \la x\ra:=(1+|x|^2)^{1/2}$,
In view of  
(\ref{eAV:norm}) in Section \ref{s:prelimin}, we have norm equivalence on $\sH^1$
so that $\norm{\cdot}_\Sigma\approx \norm{\cdot}_{\Sigma_{A,V}}$.  

\begin{definition}
    Let
    \begin{equation}\label{eqMinimizationProblem}
        I_c=\inf\{E_{A,V}(u): u\in S_c\}
    \end{equation}
where $S_c=\left\{u\in\Sigma:\int |u|^{2}dx=c^2\right\}$, $c>0$.
    Then $u\in S_c$ is called a {\em ground state solution} (g.s.s) if $u$ is a solution to the following minimization problem
    \begin{equation}
        E_{A,V}(u)=I_c. \label{EAV:Ic}
    \end{equation}
Denote $\cO_c:=\{u\in S_c: E_{A,V} (u)=I_c\}$, that is, $\cO_c$ is the set of all minimizers of the problem 
(\ref{EAV:Ic}) in $S_c\subset \Sigma$. %uniqueness modular symmetries unknown   
\end{definition}  
}      
%https://www-users.cse.umn.edu/~garrett/m/fun/adjointness_crit.pdf}{ess. selfadjoint} means a symmetric $T$ has a unique selfadjoint extension;   other equivalent descriptions can be found [RS]}    

\begin{hypo}\label{hy:G0-AV}  $V$ and $N$ satisfy the following conditions. 
	\begin{enumerate}
	\item[($V_0$):]\label{Ve:x2}  $V(x)\to +\iy$   as\ $|x|\to\iy$. %\label{Ve:+infty}
	%\item[($A_0$)]\label{A-Ve} $|A(x)|\le |V(x)|^{1/2}$
	\item[($N_0$):]\label{it:G0} Let $N: \R^d\times \mathbb{C} \rightarrow \mathbb{C}$ be continuous such that
	   for some optimal constant $C_0>0$ and  $1< p\le 1+4/(d-2)$,
\begin{align}\label{G(xv)^p}
 \vert N(x,v) \vert \leq C_0 ( |v|+|v|^{p}). % this will ensure the continuity of E(u)
\end{align}%\end{enumerate}	
	\end{enumerate}
\end{hypo}%\edz{$a=p+1\iff p=a-1$} 
Then it is easy to see that the energy functional %\begin{equation}
	$E_{A,V}$: $\Sigma \rightarrow \mathbb{R}$	is continuous. 	%\end{equation}
% which will ensure the following local theory to be true 
Examples of nonlinearity of the above type include 
$\mu_1(x) |\psi|^{p-1}\psi+ \mu_2(x) |\psi|^{q-1}\psi$ for $p,q\in [1,\frac{d+2}{d-2}]$ and $\mu_1,\mu_2\in L^\iy$.

Let $Q=Q_0\in H^1(\R^d)$ be the unique positive, radial and decreasing solution of 
\begin{equation}\label{ground}
    -\frac{1}{2}\Delta u+u -|u|^{p-1}u=0\,, \quad 
\end{equation} % 
cf. \cite{Kwong89,Merle96non-mini,wein1983sharp}.  

%e u minimal mass blow-up solutionRaphael-Szeftel 2011 \href{https://www.ams.org/journals/jams/2011-24-02/S0894-0347-2010-00688-1/S0894-0347-2010-00688-1.pdf}{JAMS}

\begin{theorem}\label{T-Existence}
  Let $p=1+4/d$.    Let   $A$ be sublinear and $V$ subquadratic. 
      %there is a positive constant $c_0$ such that 
%\begin{equation}    
  %  V(x)\to +\iy \qquad \ as\ |x|\to\iy. \label{Ve:x2} %\end{equation}
     Let $Q_0$ be the positive radial solution of \eqref{ground}. 
Suppose   Hypothesis \ref{hy:G0-AV} is valid.  If $0<c< {C_0^{-\frac{d}{4}} } \norm{Q_0}_2$, then there exists a ground-state solution of Problem \eqref{EAV:Ic}.  That is, $\cO_c$ is non-empty in $\Sigma$.   
\end{theorem}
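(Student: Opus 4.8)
The plan is to solve the constrained minimization problem \eqref{EAV:Ic} by the direct method of the calculus of variations, using the confining potential to restore the compactness that the $L^2$-critical scaling would otherwise destroy. Fix $c$ with $0<c<C_0^{-d/4}\norm{Q_0}_2$ and take a minimizing sequence $\{u_n\}\subset S_c$, $E_{A,V}(u_n)\to I_c$. The argument rests on two pillars: a coercive lower bound for $E_{A,V}$ on $S_c$, which shows $I_c>-\infty$ and that $\{u_n\}$ is bounded in $\Sigma$; and the compactness of the embedding $\Sigma\hookrightarrow L^q$ for $2\le q<2^{*}$, forced by Hypothesis $(V_0)$.

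First I would establish the lower bound. Writing $\int\bar u\cL u=\tfrac12\norm{\nabla_A u}_2^2+\int V|u|^2$, Hypothesis $(N_0)$ integrates to the pointwise bound $|G(x,|u|^2)|\le C_0|u|^2+\frac{2C_0}{p+1}|u|^{p+1}$, whence
\begin{equation*}
E_{A,V}(u)\ge \tfrac12\norm{\nabla_A u}_2^2+\int V|u|^2-C_0\norm{u}_2^2-\frac{2C_0}{p+1}\norm{u}_{p+1}^{p+1}.
\end{equation*}
The key reduction is the diamagnetic inequality $\bigl|\nabla|u|\bigr|\le|\nabla_A u|$ a.e., giving $\norm{\nabla|u|}_2\le\norm{\nabla_A u}_2$. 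Applying the sharp Gagliardo–Nirenberg inequality in the critical case $p=1+4/d$, namely $\norm{u}_{p+1}^{p+1}\le \frac{p+1}{4\norm{Q_0}_2^{4/d}}\norm{\nabla|u|}_2^2\norm{u}_2^{4/d}$ (with equality for rescalings of $Q_0$, cf.\ \cite{wein1983sharp,Kwong89}), together with $\norm{u}_2=c$ on $S_c$, yields
\begin{equation*}
E_{A,V}(u)\ge\Bigl(\tfrac12-\tfrac{C_0\,c^{4/d}}{2\norm{Q_0}_2^{4/d}}\Bigr)\norm{\nabla_A u}_2^2+\int V|u|^2-C_0c^2.
\end{equation*}
The hypothesis $c<C_0^{-d/4}\norm{Q_0}_2$ is precisely what makes the bracketed coefficient strictly positive. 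Since $V$ is continuous with $V\to+\infty$, it is bounded below, so $\int V|u|^2\ge -M_0c^2$ and $I_c>-\infty$. Along the minimizing sequence this forces $\norm{\nabla_A u_n}_2$ and $\int|V|\,|u_n|^2$ to stay bounded; then, using $|A|\le\alpha_0(|V|+1)^{1/2}$ and the norm equivalence $\norm{\cdot}_\Sigma\approx\norm{\cdot}_{\Sigma_{A,V}}$, I get that $\{u_n\}$ is bounded in $\Sigma$.

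Next I would pass to the limit. By reflexivity of $\Sigma$, a subsequence satisfies $u_n\rightharpoonup u$ weakly in $\Sigma$. Since $2<p+1=2+4/d<2^{*}$, Hypothesis $(V_0)$ gives the compact embedding $\Sigma\hookrightarrow L^2\cap L^{p+1}$ (the confinement $V\to+\infty$ prevents escape of mass to spatial infinity), so $u_n\to u$ strongly in $L^2$ and $L^{p+1}$; in particular $\norm{u}_2=c$, i.e.\ $u\in S_c$. Strong $L^2\cap L^{p+1}$ convergence, the growth bound in $(N_0)$, and the continuity of $N$ give $\int G(x,|u_n|^2)\to\int G(x,|u|^2)$ by a generalized dominated convergence argument, while the quadratic form $u\mapsto\tfrac12\norm{\nabla_A u}_2^2+\int V|u|^2$ is weakly lower semicontinuous on $\Sigma$. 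Hence $E_{A,V}(u)\le\liminf_n E_{A,V}(u_n)=I_c$, and since $u\in S_c$ forces $E_{A,V}(u)\ge I_c$, we conclude $E_{A,V}(u)=I_c$, so $\cO_c\ni u$ is nonempty.

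The main obstacle is the coercivity step. In the $L^2$-critical regime the kinetic and nonlinear terms scale identically, so the lower bound is borderline and survives only because the \emph{sharp} Gagliardo–Nirenberg constant is matched exactly against $C_0$ and $c$; this is the origin of the threshold $C_0^{-d/4}\norm{Q_0}_2$, and it is why the diamagnetic inequality—reducing the magnetic energy to the scalar sharp constant carried by $Q_0$—is indispensable. By contrast, the compactness that usually fails at criticality is here supplied for free by the confining potential $(V_0)$, so essentially all the difficulty is concentrated in getting the constant in the energy estimate right.
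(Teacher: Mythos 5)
Your proposal is correct and follows essentially the same route as the paper: the coercive lower bound \eqref{Eu>Sigma} obtained from the diamagnetic inequality and the sharp Gagliardo--Nirenberg inequality \eqref{eqGNA} (your constant $\tfrac{p+1}{4}\norm{Q_0}_2^{-4/d}$ equals the paper's $C_{GN}=\tfrac{d+2}{2d}\norm{Q_0}_2^{-4/d}$ at $p=1+4/d$), followed by the direct method using the compact embedding $\Sigma\hookrightarrow L^r$, $r\in[2,2^*)$, forced by $(V_0)$. No substantive differences.
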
 
{Theorem \ref{T-Existence} is sharp in the sense that  
there may exist no ground state solutions for (\ref{eq:AV}) if $c$ is above or equal to the threshold    
  $C_0^{-\frac{d}{4}} \norm{Q_0}_2$, 
see Remark \ref{re:nonE>c}} for relevant results in \cite{LeNamRou18a*,GuoLuoPeng23non}.  

{Note that the condition ($V_0$)  allows $V(x)$ negative if $|x|\lesssim 1$.  
Furthermore,  $(V_0)$ is much less  restrictive  than the  specific quadratic potential 
$V(x)=V_\ga(x)-\frac{|A|^2}{2}$ considered in \cite{EsLion89,ANS19stabi,BHHZ23t}.  
The growth of $V(x)$ suggests compact embedding property $\Sigma_{A,V}\hr L^r$
for $r\in [2,2^*]$, $2^*=2d/(d-2)$.  
The proof of Theorem \ref{T-Existence} is based on an adaption for the mNLS
of  a quite standard argument given in e.g., \cite{EsLion89,BHHZ23t,ZZhenZ19orbit}. %LZZ23t}. 
The main ingredient is to establish the bound \eqref{Eu>Sigma} for $u$ in $S_c$
\begin{equation}
 E_{A,V}(u)\ge \td{\eps}\, \norm{u}_{\Sigma_{A,V}}^2- \td{C} \norm{u}_2^2
\end{equation}
provided $\norm{u}_2< {C_0^{-\frac{d}{4}} }\norm{Q_0}_2$.
% add a term $C\norm{u}_2^2$ for $u\in S_c$ on both sides of the inequality 
As a corollary, %of the proof of the theorem
we obtain that any minimizing sequence $(u_n)$ of (\ref{eqMinimizationProblem})  
 must be relatively compact in $\Sigma_{A,V}$, see Remark \ref{rem:mini-compact}.
 %L^{2+\frac{4}{d}}  Sigma-norm  estimated from above E(u_n)   compact  
}
%  check whether the condition is more general than  $V_e(x)\ge c_0 |x|^2$ for $x\gg 1$}

\begin{definition} \label{def:orb-AV}
The set $\mathcal{O}_c$ is called {\em orbitally stable} if given any $\vphi\in \cO_c$, the following property holds:
For all $\varepsilon>0$, there is $\delta=\de(\veps)>0$ such that  
\begin{equation}\label{eOS:magV}
\norm{ \psi_0-\vphi }_{\Sigma}<\delta\Rightarrow \sup_t\inf_{\phi\in \cO_c} 
\norm{ \psi(t,\cdot)-\phi }_{\Sigma}<\varepsilon 
\end{equation}
where $\psi_0\mapsto \psi(t,\cdot)$ is the unique solution flow map of \eqref{eqNls_AV}.
\end{definition}  

% assume $G$ satisfies:
\begin{hypo}\label{hy:V0-B0-G1} 
 \begin{enumerate}
	\item[($V_0$):]\label{Om-ga} $V(x)\to +\iy$ as $|x|\to \iy$.
%\item[($A_0$)]\label{A-Ve} $|A(x)|\le |V(x)|^{1/2}$
\item[($B_0$):] 
The 2-form $B=dA=(b_{jk})$, $b_{jk}=\pa_k A_j-\pa_j A_k$ satisfies
\begin{equation*} 
 |\pa^k B(x)|\le C \la x\ra^{-1-\veps} \qquad\  \forall |k|\ge 1,
\end{equation*}
where $k=(k_1,\dots,k_d)\in\N_0^d$, $\N_0=\{0\}\cup \N$. 
\item[($N_1$):]\label{dG0} Let $N=N(x,z): \R^d\times \mathbb{C} \rightarrow \mathbb{C}$ be continuous and differentiable such that 
	   for some optimal constant $C_0>0$ and 	$1\le p\le 1+4/(d-2)$, 
\begin{align}\label{dG:p}
|\pa_z N(x,z)| \leq C_0( 1+|z |^{p-1}). % to ensure the continuity of E(u), and also the l.w.p. 
\end{align}
\end{enumerate} 
\end{hypo}
Observe that Condition  ($N_1$) implies Condition $(N_0)$ with the same constant $C_0$,  
where both conditions $(N_i)=(N_{i,p})$,  $i=0,1$ are $p$-dependent.  
{Conditions ($B_0$) and ($N_1$) %(\ref{dG:p}) 
ensure  the global existence of the solution flow $\psi_0\mapsto \psi(t)$ of (\ref{eqNls_AV})
provided that $\norm{\psi_0}_2<C_0^{-d/4}\norm{Q_0}_2$.
 In addition,  Condition  ($V_0$) leads to the existence of ground state set $\mathcal{O}_c$. 
Consequently we can show that, if $c<C_0^{-d/4}\norm{Q_0}_2$,  
then $\mathcal{O}_c$, the set of g.s.s. of (\ref{eqNls_AV}) are orbitally stable %in weak sense 
by following a standard concentration compactness argument}.  

\begin{theorem}\label{t2:orb-Oc}  Let $p=1+4/d$.  
Let $A,V$ and $N$ verify the conditions in Hypothesis \ref{hy:V0-B0-G1}.  
% V (\ref{Ve:x2    |\Om|<\ga
Let  $0<c<C_0^{-\frac{d}{4}}\norm{Q_0}_2$. 
Then the set $\cO_c$ is orbitally stable for the magnetic NLS (\ref{eqNls_AV}).   
\end{theorem}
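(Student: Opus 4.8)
The plan is to run the Cazenave--Lions concentration-compactness scheme, arguing by contradiction and reducing the claim to the relative compactness of minimizing sequences already recorded in Remark \ref{rem:mini-compact}. First I would isolate the two inputs that make the flow usable: under Hypothesis \ref{hy:V0-B0-G1}, conditions $(B_0)$ and $(N_1)$ guarantee that for data with $\norm{\psi_0}_2<C_0^{-d/4}\norm{Q_0}_2$ the map $\psi_0\mapsto\psi(t)$ is global in $\Sigma$ and conserves both the mass $\norm{\psi(t)}_2$ and the energy $E_{A,V}(\psi(t))$. Moreover, since $\norm{\psi(t)}_2$ stays strictly below the threshold, the coercivity bound \eqref{Eu>Sigma} applies along the entire trajectory and yields a uniform-in-time bound on $\norm{\psi(t)}_{\Sigma_{A,V}}$, hence on $\norm{\psi(t)}_\Sigma$.

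Next I would negate orbital stability. If $\cO_c$ were not stable, there would exist $\veps_0>0$, data $\psi_{0,n}\to\vphi$ in $\Sigma$ with $\vphi\in\cO_c$, and times $t_n$ such that $\inf_{\phi\in\cO_c}\norm{\psi_n(t_n)-\phi}_\Sigma\ge\veps_0$. Writing $v_n=\psi_n(t_n)$ and combining the conservation laws with the continuity of the mass and of $E_{A,V}$ on $\Sigma$, I obtain $\norm{v_n}_2=\norm{\psi_{0,n}}_2\to\norm{\vphi}_2=c$ and $E_{A,V}(v_n)=E_{A,V}(\psi_{0,n})\to E_{A,V}(\vphi)=I_c$. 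For $n$ large one has $\norm{v_n}_2<C_0^{-d/4}\norm{Q_0}_2$, so \eqref{Eu>Sigma} furnishes a uniform bound on $\norm{v_n}_\Sigma$.

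I would then renormalize: set $w_n=(c/\norm{v_n}_2)\,v_n\in S_c$. Because $\norm{v_n}_2\to c$ while $\norm{v_n}_\Sigma$ stays bounded, $\norm{w_n-v_n}_\Sigma\to0$, whence $E_{A,V}(w_n)\to I_c$ by continuity; thus $(w_n)$ is a minimizing sequence for $I_c$ in $S_c$. By Remark \ref{rem:mini-compact} a subsequence $w_{n_k}\to\phi_\ast$ in $\Sigma$, and continuity of mass and energy forces $\phi_\ast\in S_c$ with $E_{A,V}(\phi_\ast)=I_c$, i.e. $\phi_\ast\in\cO_c$. Since $w_{n_k}-v_{n_k}\to0$ in $\Sigma$, also $v_{n_k}\to\phi_\ast$, so that $\inf_{\phi\in\cO_c}\norm{v_{n_k}-\phi}_\Sigma\to0$, contradicting the lower bound $\veps_0$ and completing the proof.

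The genuinely routine part is the final compactness step, which is delegated entirely to Remark \ref{rem:mini-compact} (itself resting on the compact embedding $\Sigma_{A,V}\hr L^r$ driven by $(V_0)$). I expect the main obstacle to lie in the first paragraph: justifying global well-posedness of the magnetic flow in $\Sigma$ and conservation of $E_{A,V}$, for which the decay condition $(B_0)$ on the field $B=dA$ and the growth condition $(N_1)$ on the nonlinearity are precisely what one needs to build a local $\Sigma$-solution and then extend it globally via the threshold bound \eqref{Eu>Sigma}. The renormalization step also requires care, to ensure the scaling factor $c/\norm{v_n}_2\to1$ does not disturb the $\Sigma$-convergence; this is exactly where the uniform $\Sigma$-bound obtained from \eqref{Eu>Sigma} is indispensable.
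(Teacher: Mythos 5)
Your proposal is correct and follows essentially the same route as the paper: a contradiction argument that converts the sequence $\psi_n(t_n,\cdot)$ into a minimizing sequence via the conservation laws and continuity of $E_{A,V}$, then invokes the relative compactness of minimizing sequences from Remark \ref{rem:mini-compact}. Your extra renormalization step $w_n=(c/\norm{v_n}_2)v_n$ is a welcome refinement the paper glosses over, since the compactness statement is formulated for sequences lying exactly on $S_c$.
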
   
If $c\ge \norm{Q_0}_2$, then Eq. (\ref{eqNls_AV}) may admit a finite time blow-up solution that means strong instability.  
 At the threshold $c=\Vert Q_0\Vert_2$, 
 the minimal mass blow-up solution problem was studied in \cite{BHZ19,BHHZ23t}. 
However, at the minimal mass level, the existence 
  of  ground states and  blow-up solutions for (\ref{eqNls_AV}) seem open for general magnetic NLS
 to our best knowledge. 
Note that the problem of orbital stability for a single ground state 
is  more difficult that may deserve further investigation.   

In the second part of this article,  let 
$\Om=(\Om_1,\dots,\Om_{[\frac{d}{2}]})\in \R^{[\frac{d}{2}]}$ and $\ga=(\ga_1,\dots,\ga_d)\in\R^d$. % d =2k+1  odd
We consider the non-existence problem of (\ref{eq:AV}) 
for the rotational nonlinear Schr\"odinger equation  (RNLS) of type $(\Om,\ga)$ 
\begin{align}\label{eU:Om-ga}
&-\frac12\De u+V_\ga(x) u+ N(x,u) +L_A u=-\lam u,\qquad \norm{u}_2=1
\end{align}
where %\mu=\mu(x)$ is bounded
$L_A=i A\cdot\nabla$, $A=M x$,  $M=M_d(\Om)$  a skew-symmetric matrix, 
  \begin{align}\label{eV:ga-sgn} 
  V_{\gamma}(x)=\frac12\sum_j\eps_j\gamma_j^2 x_j^2
\end{align}
$\eps_j:=\sgn(\ga_j)$  and $N$ verifies the condition ($N_0$).  
% let  $\Om_{2j-1}=\Om_{2j}$, $1\le j\le [\frac{d}{2}]$. 
% If $d=2k$ is even, $\Om=(\Om_1,\dots,\Om_{d})$; if $d=2k+1$ is odd, $\Om=(\Om_1,\dots,\Om_{d-1},\Om_d) w \Om_{d}=0$,%  Section \ref{s:prelimin}    if $d=2k$ is even 
The magnetic form of  (\ref{eU:Om-ga}) is given by \eqref{eU:QAV} %\ref{eAV:u(t)}  
namely, 
$-\frac12(\nabla-iA)^2 u+V_e(x) u+N(x,u)=-\lam u $,
  where     in particular, if $\ga_j>0$ for all $j$, then % the rotational case, 
   \begin{align}\label{ecase:VgaOm} 
   V(x)=V_e(x)=\begin{cases}
   \frac12\sum_{j=1}^{[\frac{d}{2}]} (\ga_{2j-1}^2-\Om_j^2)x_{2j-1}^2
   +(\ga_{2j}^2-\Om_j^2)x_{2j}^2   &               \  d \ even\\
   \frac12\sum_{j=1}^{[\frac{d}{2}]} (\ga_{2j-1}^2-\Om_j^2)x_{2j-1}^2
   +(\ga_{2j}^2-\Om_j^2)x_{2j}^2 + \ga_d^2x_d^2 & \ d\ odd %2k+1
   \end{cases} 
  \end{align}
   per the time-dependent equation \eqref{L:AVe-cri} % RNLS_Omga} 
     in Subsection \ref{ss:OmgaRNLS}.
   %see section \ref{s:prelimin}. 
 The following theorem shows that if the condition ($V_0$) is not verified,
then there exists no minimizers for Problem (\ref{EAV:Ic}). 

\begin{theorem}\label{t3:non-E} Let $p\in [1, 1+4/(d-2))$.  
Let $A=M x$,  $M=M_d(\Om)$  an anti-symmetric matrix and let   
$V_\ga(x)=\frac12\sum_{j=1}^d \eps_j\ga_j^2x_j^2$ be given by (\ref{eV:ga-sgn}). %$\ga_j>0 
Then there exists no ground state solutions for (\ref{eU:Om-ga}) under either of the following conditions: % (\ref{EAV:Ic}).
\begin{enumerate}
\item[(a)]  $|\Om_{k_0} | {>} \min(\ga_{2k_0-1},\ga_{2k_0})$ for some $1\le k_0\le [\frac{d}{2}]$;  
\item[(b)]  $\ga_{j_0}<0$ for some $1\le j_0\le d$. 
\end{enumerate}
 \end{theorem}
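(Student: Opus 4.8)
The plan is to prove non-existence by showing that under either hypothesis the constrained infimum collapses, namely $I_c=-\iy$ in \eqref{eqMinimizationProblem}, so that the minimization problem \eqref{EAV:Ic} admits no solution and $\cO_c$ is empty. The driving mechanism is that conditions (a) and (b) are precisely the situations in which $(V_0)$ fails and the effective potential loses coercivity. Completing the square rewrites \eqref{eU:Om-ga} in its magnetic form \eqref{eU:QAV}, with $A=Mx$ and $V_e=V_\ga-\frac12|Mx|^2$, whose energy is $E_{A,V_e}(u)=\int \frac12|\nabla_A u|^2+V_e|u|^2+G(x,|u|^2)\,dx$. Since $|Mx|^2=\sum_j \Om_j^2(x_{2j-1}^2+x_{2j}^2)$, the form $V_e=\frac12\sum_m c_m x_m^2$ is diagonal, and as in \eqref{ecase:VgaOm} its coefficients are $c_m=\eps_m\ga_m^2-\Om_{k(m)}^2$ (with the rotational term absent for the unpaired coordinate when $d$ is odd). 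Under (a), choosing $m\in\{2k_0-1,2k_0\}$ with $\ga_m=\min(\ga_{2k_0-1},\ga_{2k_0})$ gives $c_m=\ga_m^2-\Om_{k_0}^2<0$; under (b), taking $m=j_0$ forces $\eps_{j_0}=-1$ and $c_{j_0}\le-\ga_{j_0}^2<0$. In either case there is a coordinate direction $e_m$ along which $V_e(t\,e_m)\to-\iy$ like $-t^2$.

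Next I would fix a profile $\phi\in C_0^\iy(\R^d)$ with $\norm{\phi}_2=c$ and construct a sequence of vortex-type functions by \emph{magnetic translation} along this bad direction, $u_R(x)=e^{iA(Re_m)\cdot x}\,\phi(x-Re_m)$ with $R\to\iy$. Because $|u_R|=|\phi(\cdot-Re_m)|$, we have $u_R\in S_c$ automatically. The decisive point is the magnetic kinetic term: exploiting the linearity $A(x)=Mx$, a direct computation with $y=x-Re_m$ gives
\[
\nabla_A u_R(x)=e^{iA(Re_m)\cdot x}\big(\nabla\phi(y)-i\,My\,\phi(y)\big),
\]
since $A(x)-A(Re_m)=M(x-Re_m)=My$. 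Hence $\int\frac12|\nabla_A u_R|^2\,dx=\int\frac12|\nabla\phi(y)-iMy\,\phi(y)|^2\,dy$ is \emph{independent of} $R$. The gauge phase $e^{iA(Re_m)\cdot x}$ is exactly what cancels the otherwise divergent contribution of $|A|^2$ near $x=Re_m$; this is the step I expect to be the main obstacle, and it is the reason an ordinary (non-magnetic) translate would fail, its magnetic Dirichlet energy blowing up like $R^2$.

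It then remains to estimate the two remaining terms. For the potential, $\int V_e|u_R|^2\,dx=\int V_e(y+Re_m)|\phi(y)|^2\,dy$, and expanding the diagonal quadratic form isolates the leading term $\frac12 c_m R^2\norm{\phi}_2^2$ with $c_m<0$, so $\int V_e|u_R|^2\to-\iy$ at rate $-R^2$. For the nonlinear term, Condition $(N_0)$ yields $|G(x,|u_R|^2)|\le C(|u_R|^2+|u_R|^{p+1})$ uniformly in $x$, whence $\big|\int G(x,|u_R|^2)\,dx\big|\le C(\norm{\phi}_2^2+\norm{\phi}_{p+1}^{p+1})$ is bounded uniformly in $R$; the range $1\le p<1+4/(d-2)$ guarantees $\phi\in L^{p+1}$, which is immediate for $\phi\in C_0^\iy$. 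Combining the three estimates gives $E_{A,V_e}(u_R)\le \mathrm{const}-\tfrac12|c_m|\,R^2\norm{\phi}_2^2\to-\iy$, so $I_c=-\iy$ and no minimizer of \eqref{EAV:Ic} exists. I would close by noting that (a) and (b) exhaust the ways $(V_0)$ can be violated for the type-$(\Om,\ga)$ potential, making Theorem \ref{t3:non-E} the sharp converse of the coercivity hypothesis underlying Theorem \ref{T-Existence}.
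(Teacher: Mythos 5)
Your argument is correct, and the mechanism you isolate (loss of coercivity of the effective potential $V_e=V_\ga-\tfrac12|Mx|^2$, hence $I_c=-\iy$) is exactly the paper's logical endpoint; but your test functions are genuinely different from the paper's. The paper constructs explicit vortex-type Gaussians: in case (a) it takes $\psi_m=C_m r^{|m|}e^{-\ga r^2/2}e^{im\theta}$ (and an anisotropic variant $C_m x_1^{|m|}e^{-\ga(x)/2}e^{im\theta}$), computes each energy term via Gamma-function identities, and finds $E_{\Om,\ga}(\psi_m)=(|m|+1)\ga_1-m\Om+O(1)\to-\iy$ when $m\,\Om>0$ and $|\Om|>\ga_1$; in case (b) it takes $\psi_m=C_m x_{j_0}^{|m|}e^{-\ga(x)/2}$ concentrating along the repulsive axis, with $E_{\Om,\ga}(\psi_m)=-\tfrac12|\ga_{j_0}|\,|m|+O(1)$. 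Your magnetic translates $u_R=e^{iA(Re_m)\cdot x}\phi(\cdot-Re_m)$ instead exploit the exact covariance $\nabla_A u_R=e^{iA(Re_m)\cdot x}(\nabla_A\phi)(\cdot-Re_m)$ (which I checked: it holds precisely because $A(x)-A(Re_m)=M(x-Re_m)$ for linear $A$), giving a unified one-line treatment of (a) and (b) at the faster rate $-R^2$, and avoiding all the special-function computations. What the paper's construction buys in exchange is physical content: its trial states remain centered (supported on a ring of radius $\sim\sqrt{|m|/\ga}$ rather than escaping to infinity), and the linear-in-$m$ balance $(|m|+1)\ga-m\Om$ exhibits the competition between trapping and rotation that identifies $|\Om|=\ga$ as the critical rotation speed — this is why the abstract advertises ``a sequence of vortex type functions.'' One small correction to your closing sentence: (a) and (b) do \emph{not} exhaust the failures of $(V_0)$ for this class of potentials (e.g.\ $|\Om_{k_0}|=\ga_{2k_0-1}=\ga_{2k_0}$, or $\ga_d=0$ in odd dimension, also violate $(V_0)$ but fall outside (a)--(b)); the paper explicitly leaves the case $\ga_3=0$, $|\Om|<\min(\ga_1,\ga_2)$ open, so the theorem is not a sharp converse of Theorem \ref{T-Existence}.
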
   

In Section \ref{s:non-E:AV} we show the non-existence result in Theorem \ref{t3:non-E}
by constructing some counterexamples in the anisotropic region. 
In fact, we consider the class of quadratic potentials in the form (\ref{eV:ga-sgn}).
%\begin{align}
%\td{V}_{\ga}(x)= \sum_j \eps_j \ga_j^2x_j^2 \label{eV:ga-sgn}\end{align} \eps_j:=\sgn(\ga_j)  \{\pm 1,0\}
  The discussions are furnished with the following cases:  
\begin{enumerate}
\item[(i)] $0<\ga_1=\ga_2<|\Om|$, $d=2$ 
\item[(ii)] $0<\min(\ga_1,\ga_2)<|\Om|$,  $\ga_3\in\R_+$, $d=3$
\item[(iii)] $\ga_1,\ga_2>0$, $\ga_3<0$, $\Om\in \R$.  $d=3$.
\end{enumerate} %here $d=3

The existence and stability of ground states as well as their instability for RNLS (\ref{eU:Om-ga}) 
were treated in the special case in $\R^3$ where 
 $|\Om|=\ga_1=\ga_2$ and $\ga_3=0$  in  
\cite{CazEs88,EsLion89}, \cite{Goncalves91instabiQ} and recently \cite{Dinh22mag3Drev} for $p\in [1,5)$ and $d=3$.   
Theorems \ref{T-Existence} and \ref{t2:orb-Oc} cover the case $|\Om|<\min(\ga_1,\ga_2)$, $\ga_3>0$. 
It would be of interest to study the remaining case $\ga_3=0$ and $|\Om|<\min(\ga_1,\ga_2)$. 

The organization of the remaining parts of this article is as follows.  
In Section \ref{s:prelimin} 
we briefly recall the $\sH^1$-subcritical
local in time existence result for (\ref{eqNls_AV}) as
well as some definitions and inequalities as required. 
In Section \ref{s:pf-lwpAV}  we provide a proof for the local existence theory of mNLS in $\Sigma=\sH^1$. 
In Section \ref{s:E-gss:AV}  we prove the existence and orbital stability of the set of magnetic ground states for (\ref{eq:AV}) in the $L^2$-critical 
regime  $p=1+4/d$.  % then stability
  In Section  \ref{s:non-E:AV} we show the non-existence of g.s.s for Problem (\ref{EAV:Ic}) 
  in some fast rotation cases.   %as stated in Theorem \ref{t3:non-E}. 
Finally, in Section \ref{s:numQAV} we present some numerical results to show the threshold dynamics on
the existence and blowup of the RNLS %(\ref{eQ:OmGa})
 (\ref{RNLS_Omga}) in 2D.  
We plan to continue the investigation on such existence problem for mNLS 
in the  $L^2$ supercritical regime in a sequel to this article.   

\section{Preliminaries}\label{s:prelimin}  

Define the weighted Sobolev space $\sH^{1,r}=\{u\in\cS'(\R^d): \nabla u\in L^r, (1+|A|^2+|V|)^{1/2} u\in L^r \}$ by the norm  
\begin{equation} \label{Sigma:r2}
\norm{u}_{\sH^{s,r}}=\left(\int \big\vert |\nabla|^s u\big\vert^r + \left(1+|A|^2+|V|\right)^{r/2} |u|^r \right)^{1/r}\,. 
\end{equation}
Then $\Sigma=\sH^1:=\sH^{1,2}$ is the energy space for mNLS (\ref{eqNls_AV}) so that  the functional 
$E=E_{A,V}$: $\Sigma\to \R$ is continuous. 
Denote  $\Sigma_{A,V}:=\{u\in L^2: \nabla_A u\in L^2,  |V|^{1/2} u\in L^2\}$  the magnetic Sobolev space 
for $\cL=H_{A,V}$  equipped with the norm 
\begin{equation} \label{eAV:norm}
\norm{u}_{\Sigma_{A,V}}=\left(\int \vert \nabla_A u\vert^2+ |V|\, |u|^2  + |u|^2 \right)^{1/2}\,. 
\end{equation} 
Evidently, we have  $\Sigma\subset \Sigma_{A,V}$.  For the other direction,  
 if $ |A(x)|\le \al_0 (|V(x)|+1)^{1/2}$ for  
  some $\al_0\in (0,\iy)$, %and all $|x|\gg 1$, 
    then the identification 
$\Sigma=\Sigma_{A,V}$ holds with equivalent norms $\norm{u}_{\Sigma} \approx\norm{u}_{\Sigma_{A,V}}$,
cf. Subsection \ref{magV:sobolev}. %\eqref{AVe:SigmaNorm}.  

 %$\Vert u\Vert_{2+4/n}=\iy $  and moreover  
%$\Vert u\Vert_{L^{2+4/n}( [-T,T]\times \R^n)}\le c T $
%\item   $|A_j(x)|\lesssim \la x\ra^{-1-\eps}$ and $|V(x)|\lesssim \la x\ra^{-2-\eps}$, 
% in which case $\Vert u\Vert_{L^{2+4/n}((-\iy,\iy)\times \R^{n})}<\iy }   

\subsection{Local existence for mNLS (\ref{eqNls_AV}) }\label{ss:lwp-mNLS}
%Under the conditions in Hypothesis \ref{hy:V0-B0-G1} 
 The local wellposedness (l.w.p.)  for (\ref{eqNls_AV}) were obtained 
 in  $\sH^1$ \cite{Bou91,Z12a} and \cite{HLeoZ21u}
for $A$ sublinear, $V$ subquadratic and $N(\psi)$ of power nonlinearity $\pm |\psi |^{p-1} \psi$,  
$p\in (1,1+\frac4{d-2})$.    % Global  mass critical  
The $\sH^s$ subcritical result was also considered in \cite{Z12a} for $1\le p<1+4/(d-2s)$.
% and subquadratic $V$ bounded from below  
Here we state the local existence theory for mNLS (\ref{eqNls_AV}) with a general nonlinearity %inhomog
whose proof will be given in Section \ref{s:pf-lwpAV}.  

\begin{proposition}\label{p:lwp:AV} Let $p\in (1,1+\frac4{d-2}]$. 
Let $(A,V)$ be sublinear-subquadratic %s.t.  |A|\lesssim |V|^{1/2}  
and $N$ satisfy the conditions  in Hypothesis \ref{hy:V0-B0-G1}. 
Let \mbox{$(q,r)=(\frac{4p+4}{d(p-1)} ,p+1)$} be an admissible pair given in Definition \ref{def:admissible}.  
Suppose $\psi_0\in \sH^1$. Then the following statements hold.    
\begin{enumerate}  
\item[(a)] If $p\in (1,1+\frac4{d-2})$, then there exists  a unique solution  $\psi$ of (\ref{eqNls_AV}) 
 in $C(I_*,\sH^1) \cap L^q(I_*, \sH^{1,r})$ such that     
 $I_*:=(-T_-,T_+)$, $T_\pm>0$  is the maximal lifespan of the  solution.    
%then the mNLS \eqref{eqNls_AV} exists  in $L^{\frac{2d+4}{d}}((-\iy,\iy)\times \R^d
\item[(b)]  If $p=1+4/(d-2)$, $d\ge 3$,  then there exist $T_\pm=T_\pm(\psi_0)$ 
such that $\psi(t)\in C( I_*,\sH^1)\cap L^q( I_*, \sH^{1,r})$.   %local  C((-T,T),\sH^1)\cap L^q((-T,T),\sH^{1,r}) 
\item[(c)]  There exists a global solution $\psi$ of (\ref{eqNls_AV}) in $C(\R,\sH^1)\cap L^q_{loc}(\R, \sH^{1,r})$ 
provided one of the following conditions holds:
\begin{enumerate}
\item[(i)]  $p\in (1,1+\frac4d)$;
\item[(ii)]   $p\in (1,1+\frac4{d-2})$ and $G(x,|z|^2)\ge 0$; %  I_*=(-\iy,\iy)$ for \eqref{eqNls_AV}  
\item[(iii)]   $p=1+4/(d-2)$, $d\ge 3$ and  $\Vert \psi_0\Vert_{\sH^1}<\veps=\veps(d,G)$ is sufficiently small.  
\end{enumerate}
\item[(d)] On the lifespan interval $I_*=(-T_{-},T_{+})$, we have the following conservation laws: 
\begin{align}  
(mass)\quad M (\psi)=&\Vert \psi(t)\Vert_2^2=\Vert \psi_0\Vert_2^2\label{mass-AV}  \quad \\ 
%(momentum) \vec{p}(t)= \Im \int \bar{\psi} \nabla_A \psi    (to modify
(energy)\quad E_{A,V}(\psi)=&E_{A,V}(\psi_0)  \label{Eu:AV}    \\
=&\int \left(\frac12| \nabla_A \psi |^2+V |\psi|^2 + G(x,|\psi|^2) \right)\,. \notag %\label{E_AV:psi} 
%=&\int \bar{\psi} {\cL} \psi dx +\frac{2}{p+1} \int \mu(x) |\psi |^{p+1} dx. \qquad%\text{checked\; jan.2}
%& \pa_t(\frac12 \norm{(x+2it\nabla)u}^2_2-2t^2\norm{u}_4^4)=2t\Vert u\Vert_4^4 pseudo-conformal c.law
\end{align} %Under additional symmetry   L_z and $V$ or, between  A and V
%\begin{equation}  L_\Om(t)= -|\Om|\la L_z \psi,\psi\ra=L_\Om(0).\label{e:conservLom-momen} \end{equation} 
\item[(e)] \textup{[blow-up alternative]} If $T_+<\iy$ is finite \mbox{(respectively, $T_-<\iy$)}, 
then $\psi$ blows up finite time in the sense that 
\begin{align*}
& \lim_{t\to T_+} \norm{\nabla \psi}_2=\lim_{t\to T_+} \norm{\psi}_\iy=\iy\,.\\
&(resp.   \quad  \lim_{t\to T_-} \norm{\nabla \psi}_2=\lim_{t\to T_-} \norm{\psi}_\iy=-\iy).
\end{align*}
\end{enumerate}  
\end{proposition}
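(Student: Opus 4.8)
The plan is to run a standard Banach fixed-point argument on the Duhamel formulation, the only non-classical inputs being the dispersive properties of the magnetic propagator $e^{-it\cL}$ with $\cL=H_{A,V}$. First I would record local-in-time Strichartz estimates for $e^{-it\cL}$: for the admissible pair $(q,r)$ and a bounded interval $I$,
\[
\norm{e^{-it\cL}f}_{L^q(I,L^r)}\lesssim \norm{f}_2,\qquad \norm{\int_0^t e^{-i(t-s)\cL}F(s)\,ds}_{L^q(I,L^r)}\lesssim \norm{F}_{L^{q'}(I,L^{r'})}.
\]
For sublinear $A$ and subquadratic $V$ these follow from the parametrix/FIO constructions in \cite{Bou91,Z12a,HLeoZ21u}, the constants being allowed to grow with $|I|$ since a subquadratic trap refocuses. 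The key observation is that $\la\cL\ra^{1/2}$ is a function of $\cL$ and hence commutes exactly with the flow; applying the $L^2$-level bounds to $\la\cL\ra^{1/2}f$ and invoking the norm equivalence $\norm{\la\cL\ra^{1/2}\cdot}_r\approx\norm{\cdot}_{\sH^{1,r}}$ (valid under the sublinear--subquadratic hypotheses, cf. Section \ref{s:prelimin}) upgrades everything to the energy level $\norm{e^{-it\cL}f}_{L^q(I,\sH^{1,r})}\lesssim\norm{f}_{\sH^1}$, and likewise for the inhomogeneous term. This is where the structural assumptions on $(A,V)$, in particular the decay condition ($B_0$) on $B=dA$, enter, keeping the weighted norm and the $\cL$-based norm comparable.

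With the linear theory in hand I would study the map $\Phi(\psi)(t)=e^{-it\cL}\psi_0-i\int_0^t e^{-i(t-s)\cL}N(\cdot,\psi(s))\,ds$ on a ball in $X_I=C(I,\sH^1)\cap L^q(I,\sH^{1,r})$. The growth hypothesis ($N_1$), $|\pa_z N(x,z)|\le C_0(1+|z|^{p-1})$, together with H\"older in space and the Sobolev embeddings of $\sH^{1,r}$, yields the model bound $\norm{N(\cdot,\psi)}_{L^{q'}(I,\sH^{1,r'})}\lesssim (|I|^{\theta}+\norm{\psi}_{X_I}^{p-1})\norm{\psi}_{X_I}$ with $\theta>0$ in the subcritical range $p<1+\tfrac4{d-2}$ and $\theta=0$ at the endpoint, with a matching Lipschitz estimate for $N(\psi)-N(\td\psi)$ coming from the differentiability in ($N_1$). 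Choosing $|I|$ small gives part (a); at the critical exponent $p=1+\tfrac4{d-2}$ one instead makes the Strichartz norm of the free evolution small by shrinking $I$ (part (b)) or by taking small data (part (c)(iii)); in each case $\Phi$ contracts on a small ball, producing a unique local solution of the stated regularity with a maximal lifespan $I_*=(-T_-,T_+)$.

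For the conservation laws (d), pairing \eqref{eqNls_AV} with $\bar\psi$ and integrating by parts makes the magnetic and potential contributions real, so taking imaginary parts yields mass conservation \eqref{mass-AV}, while pairing with $\bar\psi_t$ and taking real parts yields energy conservation \eqref{Eu:AV}; both are derived first for regularized data and then passed to the limit by continuous dependence. For the global statements (c): when $p<1+\tfrac4d$ the nonlinear energy is controlled by the kinetic energy with a subcritical power via Gagliardo--Nirenberg, so conservation of mass and energy gives an a priori $\sH^1$ bound; if $G\ge0$ the energy directly dominates $\norm{\nabla_A\psi}_2^2+\norm{|V|^{1/2}\psi}_2^2$, again yielding a bound after adding the conserved mass; and the energy-critical small-data solution of (c)(iii) is already global. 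Since in the subcritical case the existence time depends only on $\norm{\psi_0}_{\sH^1}$, the blow-up alternative (e) follows by the usual continuation argument: if $T_+<\iy$ then $\norm{\psi(t)}_{\sH^1}\to\iy$, and mass conservation forces $\norm{\nabla\psi}_2$ (equivalently $\norm{\nabla_A\psi}_2$ up to the sublinear correction) and, by Gagliardo--Nirenberg and Sobolev, $\norm{\psi}_\iy$ to blow up.

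I expect the genuine obstacle to be the first step---the energy-level Strichartz estimates for the unbounded-potential propagator together with the equivalence $\norm{\la\cL\ra^{1/2}\cdot}_r\approx\norm{\cdot}_{\sH^{1,r}}$---rather than the fixed-point or conservation arguments, which become routine once the linear theory is available. The subquadratic growth of $V$ means that global-in-time dispersive bounds can fail at focusing times (as for the harmonic oscillator), so everything must be phrased on bounded intervals; and the sublinear $A$ obstructs replacing the covariant $\nabla_A$ by the plain gradient $\nabla$, which is exactly why the decay condition ($B_0$) on the field is imposed, so as to keep the commutators---and hence the two Sobolev scales---comparable.
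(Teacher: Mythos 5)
Your overall route is the same as the paper's: short-time dispersive/Strichartz estimates for $e^{-it\cL}$ upgraded to the $\sH^{1,r}$ level via the norm characterization of the weighted Sobolev scale (as in \cite{Z12a,HLeoZ21u}), then a Duhamel fixed point, then conservation laws, a priori bounds, and a continuation argument. Two steps, however, do not close as you have written them.

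First, the contraction. You claim ``a matching Lipschitz estimate for $N(\psi)-N(\td\psi)$'' at the level of $L^{q'}(I,\sH^{1,r'})$ coming from the differentiability in ($N_1$). Estimating $\nabla\bigl(N(x,\psi)-N(x,\td\psi)\bigr)$ by the chain rule produces terms involving $\pa_z^2 N$, which ($N_1$) does not control; and for $1<p<2$ the model nonlinearity $|z|^{p-1}z$ is not twice differentiable, so no such Lipschitz bound in the full $X_I$-norm holds. The paper circumvents this (this is exactly the point of its Lemma \ref{l:deF_qr-Z} and the remark following it) by running the contraction in the \emph{weaker} metric $\rho(u,v)=\norm{u-v}_{Z}$ with $Z=L^\iy(I,L^2)\cap L^q(I,L^r)$, which only requires the undifferentiated difference estimate $|N(u)-N(v)|\lesssim(|u|^{p-1}+|v|^{p-1})|u-v|$; the price is that one must verify the ball $E_{2\ga}\subset X_I$ is closed for $\rho$, which the paper does via weak compactness of bounded sets in the reflexive space $X_I$. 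Without this device (Kato's method) your fixed-point argument does not produce a contraction for the full range of $p$.

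Second, the $L^\iy$ blow-up in (e). You assert that $\norm{\nabla\psi}_2\to\iy$ forces $\norm{\psi}_\iy\to\iy$ ``by Gagliardo--Nirenberg and Sobolev.'' Those inequalities give upper bounds on $\norm{\psi}_\iy$ (and for $d\ge2$ the space $H^1$ does not embed into $L^\iy$ at all), so they cannot yield a lower bound. The correct mechanism, used in the paper, is energy conservation: $\bigl|\int G(x,|\psi|^2)\bigr|\le C\norm{\psi_0}_2^2+C\norm{\psi}_\iy^{p-1}\norm{\psi_0}_2^2$ together with $V$ bounded below gives $E_{A,V}(\psi_0)\ge\frac12\norm{\nabla_A\psi}_2^2-C_1\norm{\psi_0}_2^2-C\norm{\psi}_\iy^{p-1}\norm{\psi_0}_2^2$, so boundedness of $\norm{\psi}_\iy$ would contradict $\norm{\nabla_A\psi}_2\to\iy$. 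The remainder of your outline (Strichartz setup, conservation laws via pairing, the three global-existence mechanisms in (c)) matches the paper's argument.
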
   
\begin{remark}\label{re:S(I)}  In the statements (a)-(c), all versions of $L^q( \td{I}, \sH^{1,r})$,
$\td{I}$ an interval in $\R$,
can be substituted with the Strichartz space $S_{\td{I}}$ as defined in (\ref{norm:S_I}). 
This can be proven following an evident modification of the proof of Proposition \ref{p:lwp:AV} in Section \ref{s:pf-lwpAV}. 
In fact, for the modification we only need to use the space $\td{X}_I:=S_I $ in place of $X_I= L^\iy(I, \sH^1)\cap L^q(I, \sH^{1,r}) $ 
in Subsection \ref{ss:proof-lwp:AVG}.  
\end{remark}  
% for global result on random initial data in $L^2$ see \eqref{t:random-scatt_AV}  

\bigskip 
\subsection{Rotational NLS} \label{ss:OmgaRNLS}  The RNLS is a special form of the magnetic NLS:
\begin{align}    
&i\psi_t=-\frac12\De \psi+V_\ga(x) \psi+ N(\psi) +L_A \psi\  \label{RNLS_Omga}\\     
& \psi(0,x)=\psi_0\in \sH^1\, \nonumber    
\end{align}          
where $V_\ga$ and $L_A=i (Mx)\cdot\nabla $ are defined in (\ref{eU:Om-ga}); 
$N(\psi)=G'(x,|\psi|^2)\psi$ is given via (\ref{EAV:Q}).   
 It can be written in the magnetic form  
 \begin{align}\label{L:AVe-cri}  
i\psi_t=\mathcal{L}\psi+ N(\psi), \quad \psi(0,x)=\psi_0\in \sH^1
\end{align}  %  \mu\in L^\iy
where 
   \begin{equation*} 
\mathcal{L}=-\frac{1}{2} \sum_1^d (\frac{\pa}{\pa x_j}-iA_j(x))^2u+V\,  
\end{equation*}
such that $V(x)=V_\ga(x)-\frac{|A|^2}{2}$.

\iffalse\begin{subequations}
\begin{align}
&\Vert \psi(t)\Vert_2=\Vert u_0\Vert_2 \\% use NLS and self-adjointness of L and G'(u^2)u=\mu |u|^{p-1} u  
& \vec{p}(t)= \Im \int_{\R^d} \bar{u} \nabla_A u dx\label{e:p-momentum_mAg}\\
& E(u)(t)= \frac12 \int u \mathcal{L} u dx + \frac{\mu}{p+1} \int |u|^{p+1} dx \label{e:E(u)_Lu -p}
\end{align}\end{subequations}
\rk The momentum operator in the free case is \eqref{e:p-momentum_free}. 
In \cite{CCzLee11,BHajIL14} the momentum agree with here
\begin{align}
& \vec{p}(t)=\la p_1, \dots,p_d\ra\\
p_j[u]=& \Im \int_{\R^d} \bar{u} (\nabla_j -iA_j)u dx\label{e:pj-mom_mAg}\\
\end{align}
\fi
%\footnote{Pseudo-conformal conservation law: For $p=1+4/n$,$\frac12\Vert (x+it\nabla_x)u\Vert_2^2+\frac{\veps t^2}{p+1}\Vert u(t)\Vert_{p+1}^{p+1}

If $A$ is in the rotational class, i.e., $A=Mx$, $M^T=-M$, then
 one can always write $M=O^T \td{M}O$, where $O$ is an $d$ by $d$ orthogonal matrix in $SO(d)$. 
  Here $\td{M}=\td{M}_d(\Om)$ takes the following form: If $d=2$
\begin{align*}
\td{M}_2=\td{\Om}\sigma,\quad \sigma=\begin{pmatrix}
 0&-1 \\
 1 &0
\end{pmatrix};
\end{align*}
if  $d=3$, 
$\displaystyle \td{M}_3=\begin{pmatrix} 
  \td{\Om}\sigma& {} \\  
{}   &0
\end{pmatrix} $, 
where $\td{\Omega} \in\R$. In general,  
for even dimensions, we define $\td{M}_d=\td{M}_d(\Om)$ as a matrix with $\Om_j\sigma$ on the diagonal, 
where $\Om:=(\Om_1,\dots,\Om_{[\frac{d}{2}]} )$, $\Om_j\in\R$, $j=1,\dots, \frac{d}{2}$, i.e.,
\begin{align}\label{eqMeven}
\td{M}_d=\begin{pmatrix} 
 \Om_1\sigma& 0 &\dots&0\\
 0 & \Om_2\sigma &\dots&0 \\ 
\vdots& \vdots &\ddots& \vdots\\
0&0&\dots &\Om_{\frac{d}{2}} \sigma
\end{pmatrix}.
\end{align}
For odd dimensions, we define $\td{M}_d=\td{M}_d(\Om)$ as a matrix with 
$\Om_j\sigma$  on the diagonal,  $j=1,\dots,\frac{d-1}{2}=[\frac{d}{2}]$, with zero on the last entry 
i.e.,
\begin{align}\label{eqModd}
\td{M}_d=\begin{pmatrix} 
 \Om_1\sigma & 0 &\dots&0&0\\
 0 & \Om_2\sigma &\dots&0&0 \\ 
 {\vdots}& \mbox{\vdots} &\ddots& {\vdots} & {\vdots}\\
0& 0 &\dots& \Om_{\frac{d-1}{2}} \sigma&0\\
0&0&\dots &0& 0  
\end{pmatrix}.
\end{align}  %where $d=2k+1  
Via an obvious transform $\psi(t,x)\mapsto \td{\psi}(t,O x)$ one can 
convert any RNLS (\ref{RNLS_Omga}) into a NLS ($\widetilde{\text{RNLS}}$)
in the pattern  so that $\td{A}(x)=\td{M}x$ and $\wtd{V_\ga}(x)= V_{\ga}(Ox)$ is some quadratic function.   
Then a change of time-dependent coordinates $\td{\psi}(t,x)\mapsto \td{\psi}(t,e^{t\td{M}} x)$   
will further convert the $\widetilde{\text{RNLS}}$ to a rotation free NLS with a time-dependent quadratic 
potential $\td{V}(t,x)$. % true - checked Dec.18, 23
Throughout this article, for simplicity 
we shall assume that the RNLS (\ref{RNLS_Omga})  and (\ref{eU:Om-ga}) 
admit $M=\td{M}_d(\Om)$ and $\widetilde{V_\ga}(x)=V_{\td{\ga}}(x)$  % time-independent 
for some $\Om\in \R^{[\frac{d}{2}]}$ and $\td{\ga}\in \R^d$. 

%  in the literature  appropriate space \begin{align*}  \Sigma\coloneqq\{f\in H^1(\R^{d}): x f\in L^2(\R^{d})  \end{align*}
%\begin{equation}\label{eqSigmaNorm}
%    \begin{split}
%        \norm{f}_\Sigma&\coloneqq \left(\int |\nabla f|^2\;dx+  |x|^2 |f|^2 \;d\mx\right)^{1/2}\\
%&\approx \left(\int |\nabla f|^2+  |x|^2 |f|^2 + |f|^2 dx.\right)^{1/2}\;     \end{split}   \end{equation} 

\subsection{Magnetic Gagliardo-Nirenberg (GN) inequality} \label{ss:mGN}
For the proof of Theorem \ref{T-Existence} concerning 
the existence of magnetic ground state solutions of (\ref{eq:AV}),  
that is, global minimizers of Problem (\ref{EAV:Ic}), we will need the following magnetic inequalities. 
\begin{lemma}[Diamagnetic inequality]\label{l:diamag}
 Let $A\in L^2_{loc}(\R^d,\R^d)$. Then 
\begin{equation}
 \norm{\nabla \vert u\vert}_2\le \norm{(\nabla-iA) u}_2\,. \label{diamag:A}
\end{equation}
\end{lemma}

The next lemma is a modification of the classical sharp GN inequality \eqref{eqGN},  
which can be applied to solve the minimization problem under the threshold $\norm{Q_0}_2$. 
\begin{lemma} \label{l:mGN_sharp} 
Let $p=1+4/d$. Then  there is a sharp constant  
    \begin{equation*}
        C_{GN} = \frac{d+2}{2d ||Q||_{2}^{\frac{4}{d}}}
    \end{equation*}
    such that for all $u\in\Sigma_{A,V}$
    \begin{equation}\label{eqGNA}
    ||u||_{p+1}^{p+1}\leq C_{GN}||\nabla_A u||_{2}^2||u||_{2}^\frac{4}{d}.
\end{equation} 
\end{lemma}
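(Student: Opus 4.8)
The plan is to derive the magnetic inequality \eqref{eqGNA} from the classical (non-magnetic) sharp Gagliardo–Nirenberg inequality by invoking the diamagnetic inequality, so that no independent variational argument is required for the magnetic case. The basic observation is that the left-hand side and the $L^2$-factor on the right involve $u$ only through its modulus: since $|u|^{p+1}$ and $|u|^2$ depend pointwise only on $|u|$, we have $\norm{u}_{p+1}=\norm{\,|u|\,}_{p+1}$ and $\norm{u}_2=\norm{\,|u|\,}_2$. Hence it suffices to bound $\norm{\,|u|\,}_{p+1}^{p+1}$ by $\norm{\nabla|u|}_2^2\,\norm{\,|u|\,}_2^{4/d}$ and then replace $\nabla|u|$ by $\nabla_A u$.

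First I would recall the classical sharp Gagliardo–Nirenberg inequality \eqref{eqGN} in the $L^2$-critical case $p=1+4/d$ (Weinstein \cite{wein1983sharp}, Kwong \cite{Kwong89}): for every $v\in H^1(\R^d)$,
\begin{equation*}
\norm{v}_{p+1}^{p+1}\le C_{GN}\,\norm{\nabla v}_2^2\,\norm{v}_2^{4/d},
\end{equation*}
with equality exactly for the ground state $Q_0$ of \eqref{ground}. The optimal constant is the value of this quotient at $Q_0$, and I would identify it by testing \eqref{ground} against $Q_0$ and against $x\cdot\nabla Q_0$. The first (Nehari) identity gives $\norm{Q_0}_{p+1}^{p+1}=\frac{d+2}{2}\norm{Q_0}_2^2$ and the second (Pohozaev) identity gives $\norm{\nabla Q_0}_2^2=d\,\norm{Q_0}_2^2$; dividing then yields $C_{GN}=\frac{d+2}{2d\,\norm{Q_0}_2^{4/d}}$, exactly as claimed.

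Next I would apply the displayed inequality to $v=|u|$. The one point needing justification is that $|u|\in H^1(\R^d)$ whenever $u\in\Sigma_{A,V}$; this is precisely what the diamagnetic inequality (Lemma \ref{l:diamag}) provides, since it yields $\nabla|u|\in L^2$ together with the pointwise bound $\bigl|\nabla|u|\bigr|\le|\nabla_A u|$ a.e., and in particular
\begin{equation*}
\norm{\nabla|u|}_2\le\norm{\nabla_A u}_2.
\end{equation*}
Combining this with the classical inequality applied to $|u|$, and using $\norm{\,|u|\,}_{p+1}=\norm{u}_{p+1}$ and $\norm{\,|u|\,}_2=\norm{u}_2$, gives \eqref{eqGNA} with the stated constant.

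The step that deserves care rather than the step that is hard is the passage $u\mapsto|u|$ at the level of Sobolev regularity and the almost-everywhere gradient estimate; both are standard (this is essentially Kato's inequality) and are already packaged in Lemma \ref{l:diamag}. I would not attempt to show that $C_{GN}$ is attained in the magnetic setting: for $A\not\equiv0$ one has $\norm{\nabla_A Q_0}_2^2=\norm{\nabla Q_0}_2^2+\norm{A\,Q_0}_2^2>\norm{\nabla Q_0}_2^2$, so $Q_0$ does not saturate \eqref{eqGNA}; the constant is nonetheless sharp in the sense that it cannot be decreased, as a concentration argument localizing a rescaled $Q_0$ in a region where the magnetic contribution is negligible shows, but only the inequality itself is needed in the proof of Theorem \ref{T-Existence}.
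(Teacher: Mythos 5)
Your proposal is correct and follows essentially the same route as the paper: the paper also obtains \eqref{eqGNA} as an immediate corollary of the diamagnetic inequality \eqref{diamag:A} applied to $|u|$ together with the classical sharp Gagliardo--Nirenberg inequality \eqref{eqGN}, whose optimal constant is identified via the Pohozaev/Nehari identities for $Q_0$ exactly as you compute. Your added remarks on non-attainment for $A\not\equiv 0$ are consistent with the paper's observation that equality forces $B=\nabla\wedge A=0$.
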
 
{ In the lemma, the equality holds only if $A$ is a conservative field, i.e., 
 $A=\nabla \phi$ for some $\phi\in C^1$;
or equivalently, $B=\nabla \wedge A=0$}. 
%a) $A=\la -y,x\ra$; (b) $A=\la x,y\ra$ along with the `ground state $\td{Q}$' to see how the equality \eqref{e:GN-UA-diamag}    is attained with $u=\td{Q}   \cite{EsLion89}
Lemmas \ref{l:diamag}-\ref{l:mGN_sharp} can be found  in \cite{EsLion89,ArioSzu03mag,Dinh22mag3Drev,BHHZ23t}.  
The latter  is an immediate corollary of  (\ref{diamag:A}) and the well-known GN inequality: %\begin{lemma} 
Let $p\in (1,1+\frac4{d-2})$, then   
 \[ || f ||_{p+1}\leq C ||\nabla f ||_{q}^\theta || f ||_{r}^{1-\theta}  \]
 where $\frac{1}{p+1}=(\frac{1}{q}-\frac{1}{d})\theta+\frac{1-\theta}{r} $.  
In particular, if $p=1+4/d$, then  for all $u\in H^1$, it holds that    
\begin{equation}\label{eqGN}
    \norm{u}_{p+1}^{p+1}\leq\frac{d+2}{2d\norm{Q}_{2}^{\frac{4}{d}}} \norm{\nabla u}_{2}^2\norm{u}_2^\frac{4}{d}\,,
\end{equation} 
where $Q$ is the ground state solution to \eqref{ground}. %cf. Lemma \ref{diamagGN-Q}   

\subsection{Energy for a general electromagnetic field}\label{ss:E_AV}

For general $(A,V_e)$, the  linear hamiltonian 
$\cL=H_{A,V_e}:=-\frac12(\nabla-iA)^2+V_e$,
where the notation $V_e=V_e(x)$ means an effective (electric) potential
that is identical with $V(x)$ in Eq. (\ref{eqNls_AV}).  
As is known \cite{GaZh2013a,LewinNamRou2014mNLS}, the mNLS is generated by the hamiltonian 
$\mathcal{E}(\psi)$ in the symplectic form $\psi_t= J\cE'(\psi)$ for $J^2=-I$ 
and $\cE$: $\psi\mapsto \cE(\psi)=E_{A,V_e}(\psi)$, i.e.,   
\begin{equation}\label{eAV:u(t)}
	i\psi_t=-\frac{1}{2}(\nabla-iA)^2 \psi+ V_e(x)\psi +N(\psi) \,.
	\end{equation}
The associated energy on $\sH^1=\Sigma_{A,V_e}$ %$E_{A,V_e}$  
is given by 
\begin{equation}\label{EAVe:mu}
\begin{aligned} 
   \cE(\psi)=& %\int \bar{u}\cL u +\frac{2}{p+1} \int \mu(x)|u|^{p+1} \\
    \frac12\int  |\nabla_A \psi |^2 +\int V_e(x) |\psi|^2 +  \int G(|\psi |^2)  \\
=&  \frac12\int  |\nabla \psi |^2 +\int (V_e(x)+\frac{|A|^2}{2} )  |\psi|^2 \\
+& i \int \bar{\psi} (A\cdot \nabla \psi)+ \frac{i}{2}\int (\dive A) |\psi|^2 +\int G(|\psi |^2)\,. % 
\end{aligned} 
\end{equation} 
In the setting of  RNLS (\ref{RNLS_Omga}), if   $d=3$, %\mu=-1$,
$A(x)=\Om\la -x_2,x_1,0\ra$ and $V_\ga(x)=\frac12\sum_j \ga_j^2x_j^2$, $\ga_j>0$, then we have 
in view of (\ref{EAVe:mu})  
\begin{align*} 
 E_{\Omega,\ga}(\psi)&=\frac{1}{2}\norm{\nabla \psi }_2^2 + \int V_\ga(x)|\psi |^2 
        + \int G(|\psi|^2) - i\Omega\int\bar{\psi} x^\perp\nabla \psi\,. 
\end{align*} 
%  \bar{\psi_{m}} L_z (\psi_{m})= i\overline{\psi_m} \begin{bmatrix}  -x_2\\  x_1\\  0 \end{bmatrix}
It has an expansion  
\begin{align}
&E_{\Om,\ga}(\psi)= \frac12\int |\nabla \psi|^2 +\frac12\int\left(\ga_1^2x_1^2+\ga_2^2x_2^2 +\ga_3^2 x_3^2\right) 
|\psi|^2\notag\\
        +&\int G(|\psi|^2) - i\Om \int \bar{\psi} x^\perp \cdot \nabla \psi \label{E3:OmV} \\
      =& \frac12\int |(\nabla-iA) \psi |^2 
      +\frac12\int\left( (\ga_1^2-\Om^2)x_1^2+(\ga_2^2-\Om^2)x_2^2 +\ga_3^2 x_3^2\right) |\psi |^2\notag\\
        +& \int G(|\psi|^2)\,,  \notag 
    \end{align}
where  $N(\psi)=G'(|\psi|^2)\psi$ % N(\psi)=\mu |\psi|^{p-1}\psi$, $\mu=\mu(x)$: $\R^d\to \R$  a real-valued measurable 
and $A= -\Om x^\perp$. In this article, we denote  $x^\perp=\la x_2,-x_1,0\ra$ if $d=3$;  $\la x_2,-x_1\ra$ if $d=2$.  
%\int G(|\psi|^2)= \frac{2}{p+1}\int \mu(x)|\psi |^{p+1}

If $(A,V_e)$ is a sublinear-subquadratic pair, then minimization of the energy (\ref{EAVe:mu}) gives rise to the 
associated Euler-Lagrange equation % {eq:AV }     %to  (\ref{eAV:u(t)}) 
 \begin{equation} \label{eU:QAV} 
-\lam u=-\frac{1}{2}(\nabla-iA)^2 u+ V_e(x)u + G'(|u|^2)u.%\mu(x) |u|^{p-1}u.
\end{equation}  
In  Subsection \ref{ss:EL-QAV} %{s:E-gss:AV} 
we shall see that the minimizers  of (\ref{EAVe:mu}) 
satisfy (\ref{eU:QAV}).  

The hamiltonian $\cE(\psi)$ has a nonlinear potential component 
that induces the nonlinear term $N(\psi)=G'(|\psi |^2)\psi$ in (\ref{eAV:u(t)}). %(\ref{eU:QAV}). 
In geometric optics, the cubic NLS is a  paraxial approximation to the light wave diffraction and 
dispersion when the wavelength is much smaller than the input beam radius.  
%  r_0 is input beam radius   L_{DF}=k_0\cor{r_0^2}$ is the diffraction length
For the special form $N=N(p):=\mu |\psi|^{p-1}\psi$, 
the exponent $p=3$ arises in the statistical interpretation, that is, 
the density $\wtd{\sW}=|\psi|^2$ plays the physical role of a waveguide, whose expectation 
is given by $\la \wtd{\sW} \psi \mid \psi\ra$ in the hamiltonian. % involved in the energy 
In this manner the nonlinearity in the dimensionless model near the interface %z=0 and z^\perp space-time duality
 describes the self-interactions between particles.
One can derive the cubic NLS from the cubic  Helmholtz equation  \cite{BaFibTsyn08backscattQ}. 
Other general exponents of $p$ have appeared in a variety of physical settings, e.g., the celebrated Lee-Huang-Yang
correction on the effect of quantum fluctuation on the ground state energy of a bosonic gas \cite{LeeHuangYang1957}.
See also \cite{BERGE1998nonlinearity,KohlBiswas08non-kerr} and the references therein.   %Zhidkov2001expU}   
%\end{document}

\section{Local wellposedness for mNLS (\ref{eqNls_AV}) }\label{s:pf-lwpAV}  
The proof of Proposition \ref{p:lwp:AV} can be carried out by following suitable modification of a contraction mapping argument given in \cite{Bou91,Z12a},
where the special term $N(\psi)=\pm |\psi|^{p-1} \psi$ is involved, in which case the solution $\psi$ exists in 
$C(I,\sH^1) \cap L^q(I, \sH^{1,r})$ for some time interval $I$ containing $0$,
where  $r=p+1$ and  $q=\frac{4p+4}{d(p-1)}$.  
Here we briefly present the outline of the proof for the general nonlinearity $N(\psi)=G'(x,|\psi|^2)\psi$ 
satisfying \eqref{dG:p}. 
We show that there exists a unique solution in the Strichartz space $\td{X}_I$ as defined in Subsection \ref{ss:strichartz-X_I}. 
   % contraction map argument in proving the local well-posedness (l.w.p) for 
Recall the mNLS from Eq. \eqref{eqNls_AV}
 \begin{equation}\label{mNLS:AV}
\begin{cases}
i\partial_t \psi= -\frac12\Delta_A \psi+V \psi +N(\psi) \\
\psi(0,x)=\psi_0(x)\in \mathscr{H}^1.
\end{cases}
\end{equation}
Let $\cL=H_{A,V}$ and $U(t)=e^{-it\cL}$ be the linear propagator. 
Note that a weak solution of (\ref{mNLS:AV}) in $\td{X}_{{I}}$, a suitable  subset in  $C({I},\sH^1)$, 
${I}:=(-T,T)$, $T>0$
is equivalent to the solution of the integral equation
\begin{align}  
& \psi=U(t)\psi_0- i\int_0^t U(t-s) N(\psi(s,\cdot) ds \label{u=duhamel}
\end{align} 
 in $\td{X}_{{I}}$. % \eqref{Phi(u):duhamel}  

\subsection{Dispersive estimates for $U(t)=e^{-itH_{A,V}}$} \label{ss:disp-U(t)}
\begin{definition}\label{def:admissible} The pair $(q,r)$ is called {\em admissible} provided that
  $q\in [2,\iy]$, $r\in [2,2^*]$, $2^*=2d/(d-2)$, % d\ge 3$
  $(q,r,d)\ne (2,\iy,2)$   satisfying 
$\frac{1}{q}=\frac{d}{2}(\frac12-\frac{1}{r})$.  
\end{definition} 
From \cite{Ya91} the $L^1\to L^\iy$ dispersive estimate hold for some $\de_0>0$ 
\begin{equation}\label{eU(t):disp}
 \Vert U(t-s)f\Vert_\iy\le C |t-s|^{-d/2}\Vert f\Vert_1\,, \quad\ \forall |t-s|<\de_0.
\end{equation}
Define the modified propagator $\mathcal{U}(t)=U(t)\one_{ \{|t|<\de_0\}}$. By applying %interpolation and 
the $TT^*$ argument for $\mathcal{U}(t)$ %along with  dispersive estimate (\ref{eU(t):disp}) and Hardy-Littlewood-Sobolev inequality  
in \cite{KT98} and the norm-characterization of $\sH^s$ in \cite{Z12a},   
more generally the commutator relations between $\pa_j$, $x_j$ and the oscillatory integral  for $U(t-s)$ 
described in \cite{HLeoZ21u},  via interpolation  for $\sH^k$, $k\in \Z_+$ 
%\footnote{duality d.n. work for $k<0$ might need to check the commutativity for 
%$H\inv_{A,V}\approx (I-\De_A+|V|)\inv$ and $U(t)$ when V>0    V<0  it may be a problem } 
 we obtain the $\sH^s$-Strichartz type estimates for $\cL$: For all $s\in \R_+$   
%\begin{lemma}\label{l:disp-Stri-L} 
%\[  \Vert  U_0(t)u_0 \Vert_{L^{\frac{2n+4}{n}}(\mathbb{R}^{1+n}) } \le c \Vert  u_0\Vert_{L^2(\mathbb{R}^n)} 
%A general approach is to interpolate the $L^1\to L^\iy$ dispersive estimate  and the $L^2$ conservation to get $L^p\to L^{p'}$, then apply H-L-Sobolev inequality to get the  end point  mixded-norm Strichartz 
\begin{align} 
&\Vert U(t) f \Vert_{L^q(I_0, \sH^{s})}\le  C _{s,d,q,r} \Vert f \Vert_{\sH^s}  \label{qr:stri-L}\\ 
&\Vert \int_{I_0} U(t-s)F(s,\cdot) ds\Vert_{L^q(I_0,  \sH^{s,r} )}\leq C_{s,d,q,r,\td{q},\td{r}} 
  \Vert F \Vert_{L^{\td{q}'}(I_0, \sH^{s,\td{r}'} )} \,,\label{inhomo-stri-L}
\end{align}  
where $I_0=(-\de_0,\de_0)$, $(q,r)$, $(\td{q},\td{r})$ are any admissible pairs, and $q'$ denotes the H\"older conjugate exponent of $q$. 
% u  solves $iu_t=H_{A,V} u+F    end-point  [Ginbre-Velo]  \Vert u \Vert_{L^q_t L^r_x}= \left(\int \big(\int |u(t,x)|^r dx\big)^{q/r} dt \right)^{1/q  \end{lemma}
%\begin{align*}  \Vert u \Vert_{L^q_t L^r_x}= \left(\int \big(\int |u(t,x)|^r dx\big)^{q/r} dt \right)^{1/q} \end{align*} 

\subsection{Strichartz spaces for $H_{A,V}$} \label{ss:strichartz-X_I}
 Let  $I=(-T,T)$ for $T>0$. Introduce the Strichartz  space   
 $S_I:=%\cap_{(q,r)\ admissible} L^q(I,L^r_x)}\notag \\
  \cap_{(q,r)\ admissible} L^q (I, \sH^{1,r} ) \,  $ 
% \{u\in \mathcal{S}^\prime: \norm{u(t,x)}_{S_I}<\infty\}$,  %  \cS'=\cS'(\R^{1+d})   is the Schwartz class on $\R^{1+d}$.
   endowed with the norm 
\begin{align} \label{norm:S_I}
\norm{u}_{S_I}=& \sup_{(q,r)\ admissible}\Vert u(t,x)\Vert_{L^q(I,\sH^{1,r})}\,.
%=& \sup_{(q,r)\ admissible}\Vert u(t,x)\Vert_{L^q(I,\sH^{1,r} )} \,.
\end{align}  
The dual space $N_I=\cup_{(q,r)\ admissible} L^{q'}(I,\sH^{1, r'})
=L^1(I, \sH^{1,2}  )+L^2(I, \sH^{1, \frac{2d}{d+2}}  )$ in 
$ \cS'(\R^{1+d}) $, $d\ge 3$ is 
endowed with the norm
\begin{align*} 
 &\norm{u}_{N_I} %\inf_{(q,r)\ admissible\ pair}%\Vert L^{s/2}u(t,x)\Vert_{L^q(I,L^r_x)}\notag \\
= \inf_{(q,r)\ admissible}\Vert u(t,x)\Vert_{L^{q'}(I,\sH^{1,{r'}} )} \,.
\end{align*}  
If $d=1,2$, one simply write $N_I=\cup_{(q,r)\ admissible} L^{q'}(I,\sH^{1, r'})$.  
Then the Strichartz space $(\td{X}_I, \norm{\cdot}_{S_I})$ where
\begin{equation}\label{XT:qr}
\td{X}_I=S_I\cap C(I,\sH^1) %=\cap_{(q,r)\ admissible} L^q(I, \sH^{1,r})\cap C(I,\sH^1)
\end{equation} 
is a Banach space in  $\mathcal{S}'(\R^{1+d})$.  % s=0  corresponds to the usual Strichartz  S^0_I of order zero  
  %\end{document}

%\subsection{The nonlinearity $N(u)=F(u,\bar{u})=\sum_{j,k=1}^m a_{j,k} u^j \bar{u}^k$,  $a_{j,k}\in \C$, $m\in\N
\iffalse We now prove the local wellposedness for the Cauchy problem \eqref{mNLS:AV}
for any (large) initial data in $\cW^{s,2}=\cW^{s,2}_L$. 
\begin{theorem} \label{t:lwp-AV-m} 
Let  \cor{$ s> \frac{d}{2}-\frac{2}{max(m-1,2)}$}. For every bounded subset B
 of $\cW_2^s(\C^n)=L^{-s/2}L^2$, 
 \begin{enumerate}
\item    there exists $T=T(\norm{u_0}_{\cW^{s}_{2}})>0$ such that for all initial  data  $u_0=f\in B$ there exists a  unique  solution 
\begin{equation*}
u\in \td{X}_T:=C([-T,T],W_2^s(\C^n))\cap L^q([-T,T],W_p^s(\C^n) )
\end{equation*}
 where $q>\max(m-1,2)$, $\frac{2}{q}+\frac{2n}{p}=n$, and
 \cor{$p>d/s$}, $d=2n$. Moreover, the flow $ f\mapsto  u$ is Lipschitz  from $B$ to $C([-T,T],W_L^{s,2})$.
\item The statement in (a) holds if replacing $\td{X}_T$ with  
$X_T$ in (\ref{XT:qr}). 
\end{enumerate}
 \end{theorem} 
 \footnote{note. In the 2009 version $W^{s}_2(\C^n)$ 
 was defined as $W^s_2=L^{-s}(L^2)$, 
 that is the reason initially one has $ s> \frac{n}{2}-\frac{1}{max(m-1,2)}$\crr  
 } 
\fi 
%\end{document}

By Duhamel principle for (\ref{mNLS:AV})-(\ref{u=duhamel}) consider the map  
\begin{align}  
& \Phi(u):=U(t)\psi_0- i\int_0^t U(t-s) N(x,u) ds .\label{Phi(u):duhamel} 
\end{align}   
Let $E_I=\{u\in \td{X}_I: \Vert u\Vert_{S_I} 
\le 2C \Vert \psi_0\Vert_{\sH^1} \}$, where $C$ is a constant dependent  on 
the constants in the priori estimates (\ref{qr:stri-L})-(\ref{inhomo-stri-L}) only.  
{Note that the two inequalities 
hold with a constant $\td{C}$ that is independent  of all admissible pairs $(q,r)$, $d\ne 2$. 
We will show that $\Phi$ is a contraction mapping on the closed set $E_I$ with suitable topology 
as given in Subsection \ref{ss:proof-lwp:AVG}. 

\begin{lemma}\label{l:deF_qr-Z} Let $1\le p\le 1+4/(d-2)$ and $I=I_T:=(-T,T)$, $T>0$. 
Let $1/\sigma= 1-{2}/q$, $r=p+1$, $q= \frac{4p+4}{d(p-1)}$. 
%$1/\sigma= \frac{(d+2)-(d-2)p}{2p+2}$  
Suppose  $F(x,z)\in C(\R^d,\C)$ satisfies $F(x,0)=0$ and for all $x, z$ 
%\begin{align}\label{e:non-lin_p-ineq}
%& | |u|^{p-1} u -|v|^{p-1}v|  \le p (\max (|u|, |v|) )^{p-1} |u-v|\notag\\
%\le& p (|u|^{p-1}+ |v|^{p-1})|u-v| , \end{align}    or more generally    
\begin{align}\label{e:derF_p}  
& | \pa_z F (x,z)|\le  C |z|^{p-1}\,. \quad %\forall x,z 
\end{align} 
%If $F\in C^1   \begin{align}\label{e:deF_p-ineq}
%&  D( F(u)-F(v)) %= F'(u) Du-F'(v)Dv\\ %
%= F'(u)(Du-Dv) + (F'(u)-F'(v)) Dv \notag\\
 % \le&c \begin{cases}
 % |u|^{p-1} D(u-v) + |u-v|^{p-1}  |Dv| & p-1<1\\
 % |u|^{p-1} D(u-v) + (|u|^{p-2}+ |v|^{p-2}) |u-v|  |Dv| & p-1\ge 1.\end{cases}\end{align} 
Then we have for $u,v\in L^\iy(I ,L^r)\cap L^q(I ,\sH^{1,r})$ 
\begin{align*}
&  \Vert \la x\ra F(x,u )\Vert_{L^{q'}(I,L^{r'})} 
\le C T^{1/\sigma} \Vert u\Vert^{p-1}_{\iy,r} \Vert   \la x\ra u \Vert_{q,r}  \,,\\
& \Vert \nabla_z F(x,u )\Vert_{L^{q'}(I,L^{r'})} 
\le C T^{1/\sigma} \Vert u\Vert^{p-1}_{\iy,r} \Vert \nabla u \Vert_{q,r}  \,,\notag\\
%& \Vert \nabla (F(u )-F(v))\Vert_{L^{q'}(I,L^{r'})} \\  \le& C T^{1-r/q} \left(\Vert  u\Vert^{p-1}_{q,r} \Vert \nabla( u-v) \Vert_{q,r} + \Vert  u-v\Vert^{p-1}_{q,r} \Vert \nabla v \Vert_{q,r} \right)\label{e:deFu-v}\\
&  \Vert  F(x,u )-F(x,v)\Vert_{L^{q'}(I,L^{r'})} 
\le C T^{1/\sigma} (\Vert u\Vert^{p-1}_{\iy,r}+\Vert v\Vert_{\iy,r}^{p-1}) \Vert u-v \Vert_{q,r} \, \notag
\end{align*}  
where $\norm{ u}_{q,r}:=\norm{ u}_{L^q(I,L^{r} ) }$.   
\end{lemma}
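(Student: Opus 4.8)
The plan is to estimate each of the three quantities by combining H\"older's inequality in time with pointwise nonlinear estimates coming from the structural hypothesis \eqref{e:derF_p}. Since $F(x,0)=0$ and $|\pa_z F(x,z)|\le C|z|^{p-1}$, integrating along the segment from $0$ to $z$ gives the pointwise bound $|F(x,z)|\le C|z|^p$, which is the key substitute for the classical $|z|^{p-1}z$ nonlinearity. With $r=p+1$ this yields $\norm{F(x,u(t,\cdot))}_{L^{r'}}\le C\norm{|u|^p}_{L^{r'}}$, and because $r'=(p+1)/p$ one computes $\norm{|u|^p}_{L^{r'}}=\norm{u}_{L^{p+1}}^p=\norm{u}_{L^r}^p$. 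The same reasoning applied to the weighted and gradient quantities gives $\la x\ra F(x,u)\approx |u|^{p-1}\la x\ra u$ and $\nabla_z F(x,u)\approx |u|^{p-1}\nabla u$ pointwise, so the nonlinear factor $|u|^{p-1}$ can always be peeled off and measured in $L^r$.

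First I would establish the pointwise estimate $|F(x,z)|\le C|z|^p$ and its differentiated analogues, then apply H\"older in the spatial variable to split the $L^{r'}$ norm into a factor $\norm{u}_{L^r}^{p-1}$ times the $L^r$ norm of the remaining weighted/gradient factor. This reduces each display to an inequality in the time variable of the schematic form
\begin{equation*}
\norm{\,\norm{u(t)}_{L^r}^{p-1}\,\norm{w(t)}_{L^r}\,}_{L^{q'}(I)}\le \norm{u}_{L^\iy(I,L^r)}^{p-1}\,\norm{w}_{L^q(I,L^r)}\,\norm{\one}_{L^{\sigma}(I)},
\end{equation*}
where $w$ stands for $\la x\ra u$, $\nabla u$, or (in the last line) $u-v$. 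The exponent bookkeeping is the crux: with $q=\frac{4p+4}{d(p-1)}$ one checks $\frac{1}{q'}=\frac{p-1}{\iy}+\frac{1}{q}+\frac{1}{\sigma}$ collapses to $\frac{1}{\sigma}=1-\frac{2}{q}$, which is exactly the stated definition of $\sigma$, so the time integration of the constant function $\one$ over $I=(-T,T)$ contributes the factor $T^{1/\sigma}$.

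For the Lipschitz-type third estimate I would use the elementary inequality $|F(x,u)-F(x,v)|\le C(|u|^{p-1}+|v|^{p-1})|u-v|$, again obtained by integrating $\pa_z F$ along the segment joining $v$ to $u$ and bounding the integrand by the stated growth. The same H\"older splitting in space and then in time produces the symmetric factor $\norm{u}_{\iy,r}^{p-1}+\norm{v}_{\iy,r}^{p-1}$ multiplying $\norm{u-v}_{q,r}$, with the identical $T^{1/\sigma}$ from the time integration. The main obstacle I anticipate is verifying the exponent identity in the subcritical-to-critical range $1\le p\le 1+\frac{4}{d-2}$ — in particular confirming that the spatial H\"older split is admissible (i.e.\ $(p-1)/r+1/r\le 1/r'$ holds with the correct exponents so that $\norm{u}_{L^r}^{p-1}\norm{w}_{L^r}$ indeed controls $\norm{|u|^{p-1}w}_{L^{r'}}$) and that $1/\sigma=1-2/q$ is genuinely nonnegative across this range; once these bookkeeping identities are pinned down, the estimates follow mechanically.
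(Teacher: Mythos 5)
Your proposal is correct and follows essentially the same route as the paper's proof: the fundamental-theorem representation $F(u)-F(v)=\int_0^1 F'((1-\theta)u+\theta v)\,d\theta\cdot(u-v)$ yielding the pointwise bounds $|F(x,z)|\le C|z|^p$ and $|F(x,u)-F(x,v)|\le C(|u|^{p-1}+|v|^{p-1})|u-v|$, followed by H\"older in space with $\frac{1}{r'}=\frac{p-1}{r}+\frac{1}{r}$ and in time with $\frac{1}{q'}=\frac{1}{\sigma}+\frac{p-1}{\iy}+\frac{1}{q}$, which is exactly the paper's argument. Your exponent check that $1/\sigma=1-2/q\ge 0$ precisely on the range $p\le 1+4/(d-2)$ is also the correct bookkeeping.
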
  
%\footnote{ $1/\sigma>0\iff 1<p<1+4/(d-2)$\crr}

\begin{proof} First note that  \begin{align}\label{e:F_p-ineq}
& | F(x,u)-F(x,v)|  \le C (|u|^{p-1}+ |v|^{p-1})|u-v| ,
\end{align}
which follows from \eqref{e:derF_p} and 
\begin{align*}
F(u)-F(v)= \int_0^1 F'((1-\theta)u+\theta v) d\theta \cdot (u-v) .
\end{align*}
In view of \eqref{e:derF_p} and \eqref{e:F_p-ineq}, applying H\"older inequality with $\frac{1}{r'}=\frac{p-1}{r}+\frac{1}{r}$, $\frac{1}{q'}=\frac{1}{\sigma}+\frac{p-1}{\iy}+\frac{1}{q}$, we obtain
\begin{align*}
& \Vert \la x\ra F(u )\Vert_{L^{q'}(I,L^{r'})} \le CT^{1/\sigma} \Vert  u\Vert^{p-1}_{\iy,r}\Vert \la x\ra u\Vert_{q,r}\,, \\
&\Vert \nabla F(u )\Vert_{L^{q'}(I,L^{r'})} \le CT^{1/\sigma} \Vert  u\Vert^{p-1}_{\iy,r}\Vert \nabla u\Vert_{q,r} \, ,
\end{align*}
and %Similarly we have by \eqref{e:F_p-ineq} 
\begin{align*}
& \Vert F(u )-F(v)\Vert_{L^{q'}(I,L^{r'})}\le CT^{1/\sigma} (\Vert  u\Vert_{\iy,r}+\Vert v\Vert_{\iy,r})^{p-1} \Vert  u-v\Vert_{q,r} \,.
\end{align*}
\end{proof} 
\begin{remark}   Note that in the proof of part (a) in Proposition \ref{p:lwp:AV}, 
we would not need to make an estimate  of 
$ \Vert \nabla (F(u )-F(v))\Vert_{L^{q'}(I,L^{r'})} $, this can save a loss of derivative for $\psi_0\in \sH^1$. 
\end{remark}

\begin{remark}  
Concerning the exponent for $T^{1/\sigma}$, we note that  
if $ F(u)= |u|^{p-1}u$,  then $1/\sigma>0\iff 1<p<1+4/(d-2)$;  
if $F(u)=u$, then   the the exponent %$1/\sigma_1 
can be chosen to be greater than $1/\sigma$.  
So, it is essential to deal with the nonlinearity of the type $F(u)=O( |u|^p)$.  
%\item[(ii)]  $ \Vert u\Vert_{\iy,r}\le  \Vert u\Vert_{\iy,\sH^1}$ since $H^1\hr L^r$, $r\in [2,2^*]
\end{remark}

\subsection{Proof of Proposition \ref{p:lwp:AV} }\label{ss:proof-lwp:AVG} 

We mainly follow the same line of proof in \cite{Z12a} with simplifications and modifications.  
However, Proposition \ref{p:lwp:AV}  shows additional properties for the solution of (\ref{eqNls_AV}) for  $(A,V)$ and nonlinearity $N(x,\psi)=G'(x,|\psi|^2) \psi$
that are less restrictive than \cite{Z12a}.

\begin{proof}[Proof of Proposition \ref{p:lwp:AV}]  
For simplicity we solve the integral equation \mbox{$\Phi(u)=u$,} 
in the space $X_I:=L^\iy(I, \sH^1)\cap L^q(I, \sH^{1,r}) $,   
 where 
$\Phi$ is defined as in \eqref{Phi(u):duhamel}.  
An evident modification of the proof yields the results for the solution space $\td{X}_I$ defined in (\ref{XT:qr}).  
% use the  norm  $\Vert u\Vert_{\sH^{1,r}} = \Vert  \nabla u\Vert_r+ \Vert \la x\ra u\Vert_r 

Let $ p\in[1, 1+\frac4{d-2}) $. %(when $p=1+4/d$, $\sigma=0 
Introduce $Z=L^\iy(I, L^2)\cap L^q(I, L^r) $ and
 $X_I$ as above. %=L^\iy(I, \sH^1)\cap L^q(I, \sH^{1,r}) 
 Define $\Vert u\Vert_{ X_I}:=\Vert u\Vert_{L^\iy(I, \sH^1)}+ \Vert u\Vert_{L^q(I, \sH^{1,r})}$.
 Denote $E_{2\ga}=\{u\in X_I: \Vert u\Vert_{X_I} \le 2\ga  \}$,
 $\ga>0 $  to be chosen in a moment. %equipped with the metric  \begin{align*}
%& \Vert u \Vert_{Z}= \Vert u\Vert_{L^\iy(I, L^2)}+ \Vert u\Vert_{L^q(I, L^r)} 
Then 
{$(E_{2\ga},\rho)$ is a closed set in $X_I$ endowed with the ({weaker}) metric $\rho(u,v)=\Vert u-v\Vert_{Z}$}. 
%\footnote{\crr This case is {\bf special}. $X\subset Z$ is a sub B space. Yet $E=E_T$ which is a closed ball in the stronger topology, remains to be closed set in a weaker topology.  I checked the case when $S(0, r)\subset \R^d$, which is the case. But for $C([-T,T])\subset L^2([-T,T])$ it is certainly not the case.} 
 
To see this, let $M=2\ga$ and let $\{ u_n\}\subset E_M=\{ u\in X_I:\Vert u_n\Vert_{X_I}\le M \}\subset X_T\subset Z$ be a Cauchy sequence:  
\begin{align*}
&\Vert u_n-u_m\Vert_{q,r }\to 0 \quad as\;\,n,m\to \iy.
\end{align*}
Then there exists $\td{u}\in Z$ so that  
\begin{align}\label{e:u_n-u_qr}
\Vert \Vert u_n(t, x)-\td{u}(t,x)\Vert_{L^r }\Vert_{L^q(I)}\to 0 . 
\end{align}
We want to show that $\td{u}$ belongs to $E_M$.  Indeed,
\eqref{e:u_n-u_qr} suggests that, without loss of generality, we may assume $\lim_n  u_n(t, x)=\td{u}(t,x)$ a.e. in $t,x$.  
%\footnote{ \crr The following from \cite{Z12a} can be omitted and simplified: 
%``Apply Fatou's lemma to get  % (q,r)=(\iy,2), (q,p+1)  \begin{align}
%&\Vert \Vert \la x\ra\td{u}(t, x)\Vert_{L^r }\Vert_{L^q(I)}=
%\Vert  \lim_n \Vert \la x\ra {u}_n(t,x)\Vert_{L^r } \Vert_{L^q(I)} \\ 
%\le&   \liminf_n \Vert   \Vert \la x\ra{u}_n(t,x)\Vert_{L^r } \Vert_{L^q(I)}\le M .''\label{e:xu_qr}    \end{align}   }

Next we show $\norm{\nabla_x \td{u} (t,x)}_{q,r}$ is finite and less than $M=2\ga$.   
%On the other hand, 
By the weakly compactness of $E_M$, a bounded ball in $X_I$,  
 there exists ${w}(t,x)$ in $E_M\subset X_I$ 
\begin{equation}
w-\lim_n u_n( \cdot, \cdot)={w}\quad in \  \text{weak-topology of $X_I$}      %w-\lim_m u_n( \cdot, \cdot)=\nabla \td{u} .   
\end{equation}  
That is, for all $\phi(t,x)$ in $C_0^\iy( I\times \R^d) $ 
\begin{align*}
& \lim_n \la u_n, \phi\ra= \la \td{w}, \phi\ra. 
\end{align*}  
% lim_n\la\pa_x u_n, \phi\ra=-lim \la u_n, \pa_x\phi\ra=-\la w, \pa_x\phi\ra=lim \la\pa_x w, \phi\ra  not need this eq

Now, in view of $  \la u_n, \phi\ra= \int u_n \phi  dx dt$
%and $  \la \pa_x u_n, \phi\ra=\int \pa_x  u_n \phi dxdt$,
%the  equation  above shows that 
%\begin{align}
%&\lim_n u_n(t, x)=\td{u}(t,x) \qquad a.e. \  (t,x)      \\
%&\lim_n \pa_x u_n(t, x)=\pa_x\td{u}(t,x) \qquad a.e. \  (t,x) .       \end{align}
and (\ref{e:u_n-u_qr}) we know that (up to subsequences)
\begin{align}\label{Un:to-u_qr}
\lim_n u_n(t, x)=\td{u}(t,x)=w(t,x) \qquad a.e. \  (t,x) .       
\end{align}
Therefore, $\td{u}$ is in $E_M=E_{2\ga}$.  \hB
%\end{proof}

\iffalse Remark. Alternatively, we can also argue that 
\begin{enumerate}
\item $\td{u}$ is measurable. 
\item \cor{starting from the sequence in (\ref{e:u_n-u_qr})}, 
by the  compact embedding $\sH^{1,2}\hr L^r$ for $r\in [2,2^*)$, 
for almost all $t_0\in I$, since $\norm{ {u_n}(t_0, \cdot)}_{L^r}$ is bounded, 
it implies that there exists $w(t_0,\cdot)$ s.t. 
$\norm{ {u_n-w}(t_0, \cdot)}_{L^r}\to 0$ (up to subsequence dependent on $t_0$). 
\end{enumerate}
Then, this $w$ must be the $\td{u}$.  \quad Q.E.D.
\fi % but the problem here is that we can NOT be sure if this $t_0$-subsequence 
%is the SAME subsequence extracted from (\ref{})....unless we put it in the beginning of this alternative argument 

This is due to the  fact that any bounded set in a reflective $B$ space is weakly pre-compact;  
here we take $B=L^\iy(I, \sH^{1,2})\cap L^q(I,\sH^{1,r})$. Note that  $E_{M}$ is a bounded ball in $X_I$. 

\begin{remark}\label{r:w-topology-X_I}
To show the contraction property for $\Phi$,  it is crucial to work on the space $(X_I, \norm{\cdot}_Z)$ 
with a weaker topology than $(X_I, \norm{\cdot}_{X_I})$. % without lossing derivative
The point is that any bounded sequence in a closed set $B$ in a (reflexive) topological space $X$
must be weakly pre-compact, hence bounded.  
Examples include a bounded set in a bounded domain $\cD$:
\begin{enumerate}
\item[(i)] $H^1(\cD)$ with the metric  of $L^2(\cD)$;  
\item[(ii)] $C^s(\cD)$, $s>0$ with metric of $C(\cD)$;
\item[(iii)] $L^\iy(\cD)$ with metric of $L^1(\cD)$.        %     $\cF,\sF,\Fo, \sS$
\end{enumerate}
In contrast, let $X_\Om=L^1(\Om)$.  Denote by $\cM$ the space of bounded measures on an open domain in $\Om\subset \R^d$, 
which is defined as the dual of $C_0(\Om)$,  
 that is,  $\cM$ consists of all continuous linear functionals of $C_0(\Om)$.
Then   $(X_\Om, \norm{\cdot}_{L^1(\Om)})$ and the closure of $(X_\Om, \norm{\cdot}_{\cM})$ are not the same. 
In fact, one can check this statement by taking a sequence $(f_n)$ in $L^1(\Om)$ that is an approximation to the 
identity with $\int |f_n| =1$. 
\end{remark} 

%\begin{lemma}\label{l:bound:w-comp} Let $X$ be a reflexive Banach space. Then any closed convex bounded sets are weakly compact. %compact in the weak topology

We are ready to prove that the mapping $\Phi$ in (\ref{Phi(u):duhamel}) is a contraction  on ($E_{2\ga}, \rho)$. 

($a_1$) We show that $\Phi$ is stable: $\Phi(E_{2\ga})\subset E_{2\ga}$. Let $u\in E_{2\ga}$, i.e., $\Vert u\Vert_{X_I}\le 2\ga$. %=2 C \Vert u_0\Vert_{\sH^1}     (q_1,r_1)   (\iy,2) or  (q,p+1)
Using (\ref{qr:stri-L})-(\ref{inhomo-stri-L})  and  Lemma \ref{l:deF_qr-Z} we have 
\begin{align*} 
& \Vert \Phi(u) \Vert_{X_I} \le C (\Vert \psi_0\Vert_{\sH^1}+\Vert  N(x,u) \Vert_{L^{q'}(I, \sH^{1,r'})}  )\\
\le&\ga + C' T^{1/\sigma} \Vert \psi\Vert^{p-1}_{\iy,r} \Vert  u \Vert_{L^q(I, \sH^{1,r})}\\
\le&\ga+\ga=2\ga,
\end{align*}  
where we use $\sH^{1}\hr L^r$ 
and choose $\ga= C\Vert \psi_0\Vert_{\sH^1}$ and 
\begin{align*}%\label{e:T-stable} 
&  T \le T_0(\norm{\psi_0}_{\sH^1} ):=\left(\frac{1}{2C' (2C\Vert \psi_0\Vert_{\sH^1} )^{p-1} } \right)^{\sigma} \,,
\end{align*}  
with $1/\sigma=1- 2/q>0$. 
 Note that  $\sigma >0$ if and only if  $ p< 1+4/(d-2)$.  % shows the stability. %of $E_{2\ga}$ under the map \Phi

($a_2$) Show that $\Phi$ is a contraction. %Let $(q,r)=(\frac{4p+4}{d(p-1)},p+1)$ be as above.  
 Using \eqref{qr:stri-L}-\eqref{inhomo-stri-L} and Lemma \ref{l:deF_qr-Z} we obtain
\begin{align} 
& \Vert \Phi(u)-\Phi(v) \Vert_{Z} %= \Vert \int_0^{t} e^{-i(t-s)\cL} (F(u)-F(v)) ds \Vert_{L^{q_1}(I, L^{r_1})}\\ 
\le C\Vert F(u)-F(v) \Vert_{L^{q'}(I, L^{r'})}\notag\\
\le&C^{\prime} T^{1/\sigma} (\Vert u\Vert^{p-1}_{\iy,r}+\Vert v\Vert_{\iy,r}^{p-1}) \Vert u-v \Vert_{q,r} \notag\\
% \Vert D^s (F(u)-F(v)) \Vert_{L^{q'}(I, L^{r'})}+ \Vert \la x\ra^s (F(u)-F(v) )\Vert_{L^{q'}(I, L^{r'})}\\ 
\le&C^{\prime\prime} T^{1/\sigma} (\Vert u\Vert^{p-1}_{X_I} +\Vert v\Vert^{p-1}_{X_I})\Vert  u-v \Vert_{Z}
\le \frac12 \Vert  u-v \Vert_{Z}\,,  \label{eZ:u-v}
\end{align} where  % | u |_{\iy,r}\le c\Vert u\Vert_{L^\iy (I,\sH^1) } 
we require %$T_2$
\begin{align*}
& T \le T_1(\norm{\psi_0}_{\sH^1}) :=\left(\frac{1}{4C^{\prime\prime} (2C\Vert \psi_0\Vert_{\sH^1})^{p-1}} \right)^\sigma . 
\end{align*} %From this and (\ref{e:T-stable}) we know that the space $E_T$ should be defined on the interval $I=[-T,T]$.
Therefore part (a) of Proposition \ref{p:lwp:AV} is proved on $(-T,T)$, for sufficiently small 
$T<\min(\de_0,\de_1)$,  %\de_0$ is as in Lemma  \ref{l:disp-Stri-L} and 
 $\de_1=\de_1(p,d,\Vert \psi_0\Vert_{\sH^1})$. 
 
Moreover, we note that the solution $\psi$ belongs to $C(I, \sH^1)$ and  
equation \eqref{eqNls_AV} holds in the dual space $\sH\inv=(\sH^1)^*$, 
which follow from  \eqref{u=duhamel}. %and spectral theory  

The proof of part (b) in Proposition \ref{p:lwp:AV} is an easy modification of the proof of part (a),
where the interval of local existence is dependent on some  $T=T(\psi_0)>0$, i.e., the profile of the initial data. 
%but requriing $\ga=C\norm{\psi_0}_{\sH^1}<\veps$ 

(c) We show case (ii) only. If $G(x,|\psi|^2)\ge 0$,  we deduce from (\ref{EAV:Q}) and Lemma \ref{l:diamag}   
% A, $V> 0$ quadratic or A linear any V\ge 0 
\begin{align*} %\label{e:H-energy_p} 
&\Vert \psi\Vert^2_{\sH^1}\approx \Vert \nabla_A \psi\Vert^2_2+\Vert |V|^{1/2} \psi\Vert^2_2+\norm{\psi}_2^2\\
%\approx \Vert \nabla u\Vert_2+\Vert \la x\ra u\Vert_2\\
\lesssim&  E(\psi_0)+ C \int |\psi_0|^2 \,,
\end{align*}
where %\begin{align*} 
$ E(t)= \frac12 \Vert  \nabla_A \psi\Vert_2^2+ \int V(x) |\psi |^2+\int G(x, |\psi|^2 )$.
Noting that the local existence and uniqueness are valid for $T=T(\Vert \psi_0\Vert_{\sH^1})>0$ only,
  we obtain the global result 
by a simple bootstrapping argument.   
Cases (i) and 
(iii) can be proven via a similar argument based on the proof of part (a). %the contraction mapping property for $\Phi
In fact, to show the statement in Case (i), we can bound the $\sH^1$-norm of $\psi(t)$ for all time 
by using GN inequality.
For (iii), we can show that $\Phi$ is a contraction mapping for any fixed $T=T_0>0$,
where we only need to  assume $\ga=C\norm{\psi_0}_{\sH^1}<\veps$ is sufficiently small. 

(d) The mass and energy conservation laws.
To prove (\ref{mass-AV}) it requires  the gauge condition: 
$\Im (N(x,\psi),\psi )=0 $, 
where $(f,g):=\int f\bar{g}dx$ denotes the inner product in $L^2$.  
Indeed, one calculates using (\ref{eqNls_AV}) % and integration by parts
\begin{align*}
& i \frac{d}{d t} \int |\psi|^2 dx =2i \Im \int \bar{\psi} N(x,\psi) dx
=2i \Im (  N(x,\psi),\psi ). 
\end{align*}
But the gauge condition is true, because 
$ \bar{\psi} N(x,\psi) =  G'(x,|\psi|^2)|\psi|^2$ is real-valued.  
%where $G(x,z)$ is real-valued if $z$ is real.  

To prove (\ref{Eu:AV})            %  calculate   \pa_t \int |\nabla_A \psi |^2 dx= 
for (\ref{eqNls_AV}), we write $E_{A,V}(\psi)=:E_{A,V}(t):=E_{lin}(t)+E_{nonlin}(t)$.
%\begin{align*} 
%&E_{lin}(t)= \int \bar{\psi} H_{A,V}\psi \\
%&E_{nonlin}(t)= \frac{2}{p+1}\int \lam(x)\vert\psi\vert^{p+1} \,. \end{align*}  \end{proof} 
%\footnote{ For $N=-\lam |\psi|^{p-1}\psi$: 
%Then we have \begin{align*} &\frac{d}{dt}E_{lin}(t)= -2 \Im (  H_{A,V} \psi, \lam |\psi|^{p-1}\psi )\\
%&\frac{d}{dt}E_{nonlin}(t)= 2 \Im (H_{A,V} \psi, \lam |\psi|^{p-1}\psi  )
%\end{align*} which imply (\ref{Eu:AV}). 
%}  \end{document}
We show the above for (\ref{Eu:AV}) in  the general case $N(x,\psi)=G'(x,|\psi|^2)\psi$;
$E_{lin}(t)= \int \bar{\psi} H_{A,V}\psi $,  
and $E_{nonlin}(t)=\int G(x,|\psi|^2) dx$. 
  
 %For (\ref{eqNls_AV}), 
Indeed, we write $E_{A,V}(\psi)=:E_{A,V}(t):=E_{lin}(t)+E_{nonlin}(t)$ such that
\begin{align*} 
&E_{lin}(t)= \int \bar{\psi} H_{A,V}\psi dx = \int \left(\frac12 |\nabla_A \psi|^2 + V(x)|\psi|^2\right)dx   \\
&E_{nonlin}(t)=\int G(x,|\psi|^2) dx \,.
\end{align*} 
 Multiply  $H_{A,V}\psi$ on both sides of (\ref{eqNls_AV}) and then add the conjugate. 
We obtain the following: For the linear part   %\end{proof}\end{document}
\begin{align*} 
    \frac{d}{dt} E_{lin}(t) %
    =& -i \int \left(  \overline{H_{A,V} ( \psi ) } G'(x,|\psi |^2) {\psi } - H_{A,V} (\psi)  G'(x,|\psi |^2) \bar{\psi}\right) dx\\
   =& -2 \Im (H_{A,V} \psi, G'(x,|\psi |^2) {\psi} ) \\
  =& { i \Im ( \De_A\psi,  G'(x,|\psi|^2) \psi)\,}.
\end{align*} 
For the nonlinear part,  % H_{A,V} = -\frac12(\nabla - i A)^2 + V 
recall from (\ref{EAV:Q}) that $G(x,s)$ is real-valued and differentiable in $s$ % \R^d\times R
\begin{align*}
    \frac{d}{dt}E_{nonlin}(t)&= \int G'(x,|\psi|^2) (\psi_t \overline{\psi}+ \psi \overline{\psi}_t) dx\\
    &= -i\int G'(x,|\psi|^2) \overline{\psi} \left( H_{A,V} (\psi)  +  G'(x,|\psi|^2)\psi\right)\\
    &+ i\int G'(x,|\psi|^2) \psi \left( \overline{H_{A,V} ( \psi) } +   G'(x,|\psi|^2)\overline{\psi}\right)\\
    %&=i \int G'(x,|\psi|^2) \cor{\psi } \left(  \overline{ H_{A,V} {\psi} }-  H_{A,V} (\psi)\right) dx\\
    &=  2 \Im (H_{A,V} \psi, G'(x,|\psi|^2)  {\psi} )=
    {-}\Im ( \De_{A} \psi, G'(x,|\psi|^2\psi).
\end{align*}
Therefore, (\ref{Eu:AV}) is true. \end{proof} 

(e) Blowup alternative statement is given in \cite{BHHZ23t,LeoZ22n};  esp. some  details of the proof can be found   in \cite{LeoZ22n} following the line of bootstrapping 
argument using Strichartz estimates in \cite{Cazenave2003}.  

% {What is needed here is for us to show  as $t\to T$, 
%\item[(e)] \textup{[blow-up alternative]}  If $T_+<\iy$ is finite \mbox{(respectively, $T_-<\iy$)},   
%then $\psi$ blows up finite time in the sense that  \begin{align*}
%& \lim_{t\to T_+} \norm{\nabla \psi}_2=\lim_{t\to T_+} \norm{\psi}_\iy=\iy\,.\\ 
%&(resp.   \quad  \lim_{t\to T_-} \norm{\nabla \psi}_2=\lim_{t\to T_-} \norm{\psi}_\iy=-\iy\,) \end{align*} 
%\end{enumerate}  

%Oleg's proof 
\begin{proof}[Proof  of (e)]   Assume $T_+<\iy$ is finite \mbox{(respectively, $T_-<\iy$)} and 
$\norm{\psi}_{L^q( I_*, \sH^{1,r})}$ is finite. 
Let $0\leq t \leq t+\rho<T_+$. For fixed $t$ applying the operator $U(\rho)=e^{-i\rho\cL}$ to \eqref{Phi(u):duhamel} with $\Phi(\psi)=\psi$ followed by a change of variable gives 
\begin{align}  
& U(\rho)(\psi(t))=U(t+\rho)\psi_0- i\int_{-t}^0 U(\rho-s) N(\psi)(s+t,\cdot) ds. \label{Phi(u):duhamel1}
\end{align} 
Meanwhile, since $0 \leq t+\rho<T_+$, we have from the same formula \eqref{Phi(u):duhamel} 
\begin{align}  
& \psi(t+\rho)=U(t+\rho)\psi_0- i\int_{0}^{\rho} U(\rho-s) N(\psi)(s+t,\cdot) ds. \label{Phi(u):duhamel2}
\end{align} 
Next, we subtract \eqref{Phi(u):duhamel2} from \eqref{Phi(u):duhamel1}
\begin{align*}
   U(\rho)\psi(t,x)= \psi(t+\rho)+i\int_{0}^{\rho} U(\rho-s) N(\psi)(s+t,x) ds.
\end{align*}
Applying \eqref{inhomo-stri-L} to the second term we obtain 
\begin{align*} 
&\Vert U(\cdot) \psi(t,\cdot) \Vert_{L^q([0,T_{+}), \sH^{1})} \le \Vert \psi \Vert_{L^q([t,T_{+}), \sH^{1,r})} 
+ C_{d,q,r,\td{q},\td{r}} 
  \Vert F \Vert_{L^q([t,T_{+}), \sH^{1,r})}
\end{align*} 
for any $0<T_{+}-t<\eta$ for some small $\eta>0$. So, by virtue of \eqref{qr:stri-L}, there exists $t^{*}=t^{*}(\delta)>0$ such that
\[
\Vert U(\cdot) \psi(t,\cdot) \Vert_{L^q([0,T_{+}), \sH^{1})} < \delta 
\]
on the interval $[0,T_{+}-t+t^{*}]$. Now for fix $t$ and $t^{*}$ 
we apply the mapping $\Phi$ on the closed ball  %
\[
E_{[t,T_{+}+t^{*}]}:=\{u: \norm{u}_{L^q([t,T_{+}), \sH^{1,r} )}\leq 2\delta \}
\]
to obtain the existence of a solution that is defined on an interval beyond $[t, T_{+})$.
This contradiction shows 
that $\norm{ \psi }_{L^q([t,T_{+}), \sH^{1,r}) }$ is infinite. 

Next, we assume that $\limsup_{t\to T_+} \Vert \nabla \psi(t)\Vert_2< \infty.$ Then we can find a small $\tau_0>0$ such that the solution $\psi(t)$ extends beyond  $(T_+ -\tau_0,T_+)$ to  $(T_+ -\tau_0,T_+ +\tau_0)$, which contradicts with maximality of $T_+$. Therefore, $\lim_{t\to T_+} \Vert \nabla \psi(t)\Vert_2= \iy$. 

{To show  $\lim_{t\to T_+} \norm{\psi}_\iy=\iy$, we start with an estimation of the nonlinear term with $C>0$. 
\begin{align*}
    \left|\int G(x,|\psi|^2) \right| &\leq C \norm{\psi}^2_2  + C\int |\psi|^{p+1}\\
    &\leq C \norm{\psi_0}^2_2 + C \norm{\psi}_\iy^{p-1} \norm{ \psi_0}_2^2  \,.
\end{align*}  %  G-N ineq to get  \begin{align*}  \left|\int G(x,|\psi|^2)dx\right| &\leq C \norm{\psi(0)}^2_2 + C_1 ||\psi||^q_{\iy} ||\psi(0)||_2^a ||\nabla\psi||_2^b   a=\frac{d}{2}(p-1-q)$ and $b=(p-q)\frac{2-d}{2}+\frac{2+d}{2}$. Note that for $1<p<1+\frac{4}{d-2}  can always find $q>0$ such that $0<b<2
The condition ($V_0$) implies that $V(x)$ is bounded from below, and so, 
 $\int V(x)|\psi|^2 \geq -C'\int |\psi |^2$ for some $C'>0$. 
 Thus we deduce from  energy conservation that 
\begin{align*}
    E_{A,V}(\psi_0)\geq \frac12 ||\nabla_A \psi ||_2^2 - C_1 \norm{\psi_0}^2_2 - C \norm{\psi }^{p-1}_{\iy} 
    \norm{\psi_0}_2^2     
\end{align*}
where $C_1>0$.  Since  $\lim_{t\rightarrow T_+} ||\nabla \psi||_2 = \iy$, 
it follows that $\lim_{t\rightarrow T_+} ||\psi||_\iy = \iy$.
} This completes the proof of Proposition \ref{p:lwp:AV}.
\end{proof} 

\medskip  
\begin{remark}  % cf. \cite{CaWei}  
{ Let $T_+$ be finite. Following \cite{BHZ19} we can show that for any %(large)
 initial data there is a  lower bound for the blow-up rate. %using \eqref{eq:lwp-ineq}.
According to part (e) of Proposition \ref{p:lwp:AV}, %Theorem \ref{}, 
the solution of \eqref{eqNls_AV} satisfies $\lim_{t \rightarrow T_+} \norm{\nabla \psi }_{2} = \infty$.
Then there exists  $C=C_{p,d}> 0$ such that
	\begin{align*}
	\left\Vert\nabla \psi(t) \right\Vert_{2}
	\geq C (T_+ - t)^{-(\frac{1}{p-1} - \frac{d-2}{4})}.
	\end{align*}
	 %cf. \cite{Caz03} and \cite{BHZ19a} where   quite standard argument as in \cite{CazW90} that is used  % l.w.p 
 %  blowup alternative for Eq. \eqref{eqNls_AV} is shown on $[0,T_+)$ such that
  % $\norm{\nabla \psi}_2\to \iy$.   % hypothesis the solution cannot be extended in $H^1$ beyond the interval $[0, T^*)$.
%From \eqref{eq:lwp-ineq}, it follows that if for some $M > c \| u(t_0) \|_{H^1}$ one has
	%\begin{align*} c \| u(t_0) \|_{H^1}+ c (T - t_0)^\delta M^p \leq M, \qquad
	%\delta = 1 - \frac{(n-2)(p-1)}{4}, \end{align*}
%then $T < T^*$. Therefore for all $M > c \| u(t_0) \|_{H^1}$,
%	\begin{align*}	c \| u(t_0) \|_{H^1} + c (T^* - t_0)^\delta M^p \geq M.	\end{align*}
%Choosing $M = 2c \| u(t_0) \|_{H^1}$ it follows that
%	\begin{align*}
%	(T^* - t_0)^\delta \| u(t_0) \|_{H^1}^{p-1}	\geq \frac{1}{(2c)^p}	:= c_0.	\end{align*}
%Since $\|u(t)\|_{L^2} = \| \varphi_0 \|_{L^2}$, we obtain
%	\begin{align*}	\| \nabla u(t_0) \|_{L^2}
	%\geq c_0 (T^* - t_0)^{-\frac{\delta}{p-1}}
	%= c_0 (T^* - t_0)^{-(\frac{1}{p-1} - \frac{n-2}{4})}. \end{align*}
If  $p = 1 + \frac{4}{d}$,  then we have the lower bound 
	\begin{align*}
	\norm{ \nabla \psi(t) }_{2}\geq C (T_+ - t)^{-\frac{1}{2}}\,.
	\end{align*}
}
The same kind of lower bound holds for the situation $T_-<\iy$. 
\end{remark} %The proof of $\norm{\psi(t)}_\iy\to \iy$ can be simplified  $\to $ \cor{modify the proof above for $N=G'(|\psi|^2) \psi$, and further show $\norm{\nabla\psi }_\iy\to \iy$ as $t\to T_+
%\end{document}   the comments on X_i with weak topoplogy in a bounded ball and X_I with its usual (strong) norm are given in the part (a) already  NOV 22, 23
%\end{document}

\section{Existence of ground states for mNLS}\label{s:E-gss:AV}  
In this section, we  first prove Theorem \ref{T-Existence}   when $p=1+4/d$, which is concerned with
the existence of ground state solutions for (\ref{eq:AV}). 
Then, Theorem \ref{t2:orb-Oc} follows as a corollary, which asserts the stability for the set of magnetic g.s.s. $\cO_c$ in $\Sigma_{A,V}$.   
In doing so, we construct a ground state solution that is  a minimizer   for Problem (\ref{EAV:Ic})  %RNLS_Omga})
  under the condition $V(x)\to \iy$ as $|x|\to \iy$.   
In fact,  we will show the result  assuming the conditions in Hypothesis \ref{hy:G0-AV}. 
  %\begin{align}\label{e:nls-G-Omga}	i \psi_t = -\frac12\De_A \psi +V(x) \psi -\kappa N(x,\psi)%-G'(|u|^2)u   \quad   u(0, x)
Recall form (\ref{EAV:Q}) that the associated energy functional $E_{A,V}$ is given by %associated to  (\ref{e:nls-G-Omga})  %\edz{
	\begin{equation}
	E_{A, V}(u)
	=\int \left(\frac12 |\nabla_A u|^2 +V(x) |u|^2 +G(x,|u|^2) \right) dx,\label{Eomga:u-G}
	\end{equation}
where  $G(x,s^2)=2\int_0^{s} N(x,\tau)d\tau$ for $s\ge 0$, 
and the nonlinearity $N$ satisfies  (\ref{G(xv)^p}).
 
\iffalse
%\begin{hypo}\label{hy:G0-Om-ga} Let $\Om$, $\ga$ and $G$ satisfy the following conditions. 
	\begin{enumerate}
	\item[($\Omega, \gamma)_0$:]\label{it:om-ga} $|\Omega| < \gamma$.
	\item[($G_0$):]\label{it:G0} Let $G: \R^n\times \mathbb{R}_+ \rightarrow \mathbb{R}_+$ be continuous and differentiable 
	such that
	   for some constant $C>0$ and  $ p>1$,
\begin{align}\label{G(xv)^p}
0\le G(x,v) \leq C ( v+v^{\frac{p+1}{2}}). % this will ensure the continuity of E(u),  (also the l.w.p. ?)
\end{align}%\end{enumerate}	
	\end{enumerate}
\end{hypo}%\edz{$a=p+1\iff p=a-1$} 
\fi

\iffalse
%Throughout this section we will assume N satisfies:
 \begin{enumerate}
	\item[($\Omega, \gamma)_0$:]\label{Om-ga} $|\Omega| < \gamma$.
	\item[($G_0^\prime$):]\label{dG0} Let $G: \R^n\times \mathbb{R}_+ \rightarrow \mathbb{R}_+$ be continuous and differentiable such that 
	   for some constant $C>0$ and 	$1\le p< 1+\frac{4}{n-2}$, 
\begin{align}
|\pa_vG(x,v)| \leq C( 1+v^{\frac{p-1}{2}}). % to ensure the continuity of E(u), and also the l.w.p. 
\end{align}
\end{enumerate} 
Note that ($G_0^\prime$) is slightly stronger than $(G_0)$. 
It is easy to see from the condition $(G^\prime_0)$ and the embedding $H^1\hr L^{r}$, $r\in [2,\frac{2n}{n-2})$ 
\begin{equation}
	E_{\Om, \ga}: \Sigma \rightarrow \mathbb{R} \quad
	\text{is continuous} . \label{(3.3)}
	\end{equation}
% which will ensure the following local theory to be true

Note that  the nonlinearity $N(u)=G'(|u|^2)u$ 
includes $\mu(x) |u|^{p-1}u$, %$p\in (1,1+4/n]
and the combined inhomogeneous terms 
$\mu_1(x)|u|^{p_1-1}u+\mu_2(x) |u|^{p_2-1}u$, %with $p=\max(p_1,p_2)$, %$p_i\in (1,1+\frac{4}{n}]$,
$\mu, \mu_1, \mu_2$ being positive and bounded functions.
%g(x,u)=G'(x, |u|^2)u  \begin{align*} &g(x, u)= g(x,|u|)\frac{u}{|u|}\\
%&{\color{blue}|G(x, |u|^2)-G(x,|v|^2)|\le C(|u|+|v|+|u|^p+|v|^p) ( |u-v|)} 
\fi
%   abbreviation $G(s)=G(x,s)$ %G'(s)= \pa_sG(x,s) 

\subsection{Proof of Theorem \ref{T-Existence}} \label{ss:proof-Thm1}
 We follow the argument described in  \cite[Section 7]{BHHZ23t}. 
  Note the following properties as required in the proof.   
 \begin{enumerate}
\item[(a)]   { The linear energy of $u$  in $S_c$
is equivalent to the norm of $u$ in $\Sigma$ space (modular the mass $\int |u|^2=c^2$)}.  
\item[(b)] Since $V(x)\to \iy$ as $|x|\to \iy$,  {$\Sigma$ is compactly embeded in $L^r$ for $r\in [2,2^*)$.} \end{enumerate} 

\begin{proof} Let $p=1+4/d$. {Condition ($V_0$) implies in particular 
\begin{align*}  
& V(x)\ge -C_1\qquad\text{for all $x$ and some constant $C_1>0$}. 
\end{align*} 
}
{From (\ref{G(xv)^p}) we deduce that  
\begin{align*}
& \int |G(x,|u|^2)| dx\le 2 \int \int_0^{|u|} \vert N(x,s)\vert ds dx\\
\le& C_0\int |u|^2+ \frac{2C_0}{p+1} \int |u|^{p+1}\,.
\end{align*}
}

Now, applying \eqref{eqGNA} we obtain from (\ref{Eomga:u-G}) that for all $u\in S_c$ and $c<\Vert Q\Vert_2/{C_0^{d/4}}$
\begin{align}
        E_{A,V}(u )  %&\cob{=\int \left( \frac12 |\nabla_A u|^2 +V_e(x) |u|^2 \right)+\int_{G\ge 0} G(x,|u|^2) +\int_{G< 0} 
        {\ge}& \frac12 \int |\nabla_A u|^2  + \int { (|V(x)|-2C_1)} |u|^2 {-C_0\int |u|^2} \notag\\ 
        -& {\frac{2C_0}{p+1} } C_{GN} \norm{\nabla_A u}_{2}^2\ \norm{u}_2^\frac{4}{d}\notag\\
 \ge&  \frac12\left(1-{C_0}\frac{\norm{u}_2^\frac{4}{d}}{\norm{Q}_2^{\frac{4}{d}}}\right) \norm{\nabla_A u}_2^2
 +{\int |V|\, |u|^2} {-(C_0+2C_1)\int |u|^2}\,. \label{Eu>Sigma} 
% >& -c^2 (C_0+2C_1). \notag %\label{EAV-bound-dU}
\end{align} 
The estimate (\ref{Eu>Sigma}) ensures that we can repeat the logical sequence in the proof of  \cite[Theorem 7.8]{BHHZ23t} as follows:
\begin{itemize}
    \item[(i)] For all $u\in S_c$, $E_{A,V}(u)$ is bounded from below by a constant. Hence, $I_c\ne -\iy$ in  (\ref{eqMinimizationProblem})     exists. 
    \item[(ii)] Let $\{u_n\}\subset S_c$ be any minimizing sequence for \eqref{eqMinimizationProblem}, then it is bounded in $\Sigma$.
    \item[(iii)] Since $\Sigma$ compactly embedded in $L^r$, $r\in\left[2,\frac{2d}{d-2}\right)$, there exist
     $\phi\in\Sigma\cap S_c$ and a subsequence of $(u_n)$, that we still denote as $\{u_n\}$, such that 
     $\lim_n u_n=\phi$ in $L^{p+1}$ and w-$\lim_n u_n=\phi$ in $\Sigma$.  
   \item[(iv)] As a consequence of (iii) we have,  up to some subsequence, 
  \begin{align*} 
    &\lim_n \int G (x, |u_n|^2)= \int G(x,|\phi|^2)\, \\ 
    and\quad & \norm{ \phi}_\Sigma\le \liminf_n \norm{ u_n}_\Sigma \,.   
 \end{align*}
    \item[(v)] It follows that 
    \[
    E_{A,V}(\phi) \le \lim_{n\rightarrow\infty} E_{A,V}(u_n) =I_c. 
    \]
By the definition of the minimum, we must have $E_{A,V}(\phi)=I_c$. 
 Moreover,  $u_n\rightarrow\phi$ in $\Sigma$-norm as $n\to \iy$. 
\end{itemize}
Therefore, there exists a minimizer of Problem \eqref{EAV:Ic}. 
\end{proof}

\begin{remark}\label{re:nonE>c}
Theorem \ref{T-Existence} suggests that in the focusing, mass-critical case, there exists ground state solutions on the 
level set  $S_c$ whenever $c$ is below  $\norm{Q_0}_2$. 
{However such solution(s) may not be { unique}. 
Esteban and Lions \cite{EsLion89} showed that there are infinitely many solutions of (\ref{eq:AV}) 
if $-\lam<|\Om|$ when $A=\Om(-x_2,x_1,0) $, $V=0$ 
and $G(x,|u|^2)= -\frac{2}{p+1}|u|^{p+1}$ (here ${ B}=\td{M}_3(-2\Om)=-2\Om\begin{pmatrix} 
  \sigma& {} \\  
{}   &0
\end{pmatrix} $, $\sigma=\begin{pmatrix} 
  0& {-1} \\  
{1}   &0
\end{pmatrix} $
is identified with ${\bf B}=\la 0,0,2\Om\ra$); 
also see \cite{ArioSzu03mag,DingWang2011mag} in the context of a general magnetic field}.    
If $c\ge \norm{Q_0}_2$, then  there  exist no  solutions 
for $(\ref{EAV:Ic})$, see \cite{GuoSeir2014,LeNamRou18a*} 
and \cite[Theorem A]{GuoLuoPeng23non}. %GuoZengZh14,GuoWaZengZh15  V quadratic   A=0 
\end{remark} 
\begin{remark} \label{rem:mini-compact}
From the outlined proof above we see that 
given any minimizing sequence $(u_n)$ to (\ref{eqMinimizationProblem})-(\ref{EAV:Ic}),  
there exists a subsequence $(u_{n_j})$ converging to some minimizer $\phi$ in $\Sigma=\Sigma_{A,V}$. 
This suggests that {\em any minimizing sequence to Problem (\ref{EAV:Ic}) is relatively compact in $\Sigma$}. 
\end{remark} 
\subsection{Euler-Lagrange equation for $Q_{A,V}$}\label{ss:EL-QAV} 
A solution of Problem (\ref{EAV:Ic}) is called a {\em magnetic ground state solution} denoted as $u=Q_{A,V}$.   
It is  a stationary solution $u$ of (\ref{eqNls_AV}) in $\Sigma=\Sigma_{A,V}$ satisfying  the Euler-Lagrange equation (\ref{eq:AV})  for some multiplier $-\lam$
\begin{equation*} 
-\lam u = -\frac12\De_A u + V(x) u + G'(x,|u|^2) u\, .  %\label{eQ_AV} 
\end{equation*} 
%where -\lam is the lagrangian multiplier 

 Indeed, if $E_{A,V}(u)=I_c$, that is, $u$ is a minimizer restricted to the submanifold $S_c\subset \Sigma$,
  then $u$ is actually a global minimizer in the sense that % ground state solution 
 the action functional $S_{\nu}(u)$ attains the least energy value over  all non-trivial $\Sigma$-solutions of \eqref{eq:AV}, 
 where
\begin{align}\label{eSnu}  
S_{\nu}(u):&=\frac12 \int | (\nabla-iA)  u|^2+\int (V+\nu) |u|^2  +\int G(x,|u|^2) \nonumber\\
 & = E_{A,V}(u) + \nu \int|u|^2\,. 
\end{align}
\begin{prop}\label{p:Snu:gss} The function  $u\in \Sigma$ is a ground state solution of \eqref{eq:AV} with $\lam\in\R$
 if and only if for some $\nu\in\R$ 
\begin{equation*}
S'_{\nu}(u)=0\ {and}\ S_{\nu}(u)=\inf_{0\ne \phi\in \Sigma}\{S_{\nu}(\phi):  S'_{\nu}(\phi)=0 \}\,,
\end{equation*}   
in which case  % if $u_0\in S_c$ is a minimizer of I_c
     $u=u_\nu$ solves \eqref{eq:AV} with $\nu=\lam$. 
 \end{prop}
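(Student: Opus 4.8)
The plan is to reduce both sides of the equivalence to the Euler--Lagrange equation for the action $S_\nu$, and then to compare the constrained problem \eqref{EAV:Ic} with the free critical-point problem for $S_\nu$. First I would record the first variation of $S_\nu=E_{A,V}+\nu\norm{\cdot}_2^2$. Under Hypothesis \ref{hy:G0-AV} the functional $E_{A,V}$, and hence $S_\nu$, is $C^1$ on $\Sigma$, and a direct computation using $N(x,u)=G'(x,|u|^2)u$ gives, for every $w\in\Sigma$,
\[ \la S'_\nu(u),w\ra=\mathrm{Re}\int\Big(\nabla_A u\cdot\overline{\nabla_A w}+2(V+\nu)\,u\bar w+2N(x,u)\,\bar w\Big)\,dx. \]
Integrating the first term by parts, $S'_\nu(u)=0$ in $\sH\inv=(\sH^1)^*$ is equivalent to $-\tfrac12\nabla_A^2 u+(V+\nu)u+N(x,u)=0$, i.e.\ to \eqref{eq:AV} with $\lambda=\nu$. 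This already yields the final clause ``$u=u_\nu$ solves \eqref{eq:AV} with $\nu=\lam$'' and identifies the admissible set $\{\phi\ne 0:S'_\nu(\phi)=0\}$ with the nontrivial $\Sigma$-bound states at frequency $\nu$.

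For the forward implication, assume $u$ solves \eqref{EAV:Ic}, so $E_{A,V}(u)=I_c$ with $c=\norm{u}_2$. Since the constraint $u\mapsto\norm{u}_2^2$ is $C^1$ on $\Sigma$ with nonvanishing differential $2u$ on $S_c$, the Lagrange multiplier rule produces $\nu\in\R$ with $S'_\nu(u)=0$, and by the first step this $\nu$ is exactly the frequency $\lam$. It remains to check $S_\nu(u)\le S_\nu(\phi)$ for every nontrivial critical point $\phi$. Writing $c_\phi=\norm{\phi}_2$ we have $\phi\in S_{c_\phi}$, hence $E_{A,V}(\phi)\ge I_{c_\phi}$, so that
\[ S_\nu(\phi)=E_{A,V}(\phi)+\nu c_\phi^2\ge I_{c_\phi}+\nu c_\phi^2,\qquad S_\nu(u)=I_c+\nu c^2. \]
The claim thus reduces to the scalar statement that $a\mapsto I_a+\nu a^2$ is minimized at $a=c$.

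For the converse, let $u$ be a least-action critical point with frequency $\nu$ and put $c=\norm{u}_2$. By Theorem \ref{T-Existence} (for $c$ below the threshold $C_0^{-d/4}\norm{Q_0}_2$) the level $I_c$ is attained by some $v\in S_c$, which by the forward implication is a least-action critical point at its own frequency $\nu_v$ with $E_{A,V}(v)=I_c$. Once the frequencies are matched, so that $v$ lies in the admissible set for $S_\nu$, minimality gives $S_\nu(u)\le S_\nu(v)=I_c+\nu c^2$, while $u\in S_c$ forces $E_{A,V}(u)\ge I_c$; together these yield $E_{A,V}(u)=I_c$, i.e.\ $u$ solves \eqref{EAV:Ic}.

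The \emph{main obstacle} is the one-variable reduction common to both directions: that $a\mapsto I_a+\nu a^2$ is minimized at $a=c$, and the matching of the frequencies $\nu$ and $\nu_v$ in the converse. The natural tool is the envelope identity $\tfrac{d}{da}I_a=-2\nu_a a$, where $\nu_a$ denotes the multiplier of the minimizer at mass $a$; this makes $a=c$ a critical point of $a\mapsto I_a+\nu a^2$, and upgrading it to a genuine minimum requires a suitable monotonicity of $a\mapsto\nu_a$ (equivalently, a concavity/subadditivity property of the value function $I_a$). I expect this to hold throughout the $L^2$-critical window $0<a<C_0^{-d/4}\norm{Q_0}_2$ secured by Theorem \ref{T-Existence} and the magnetic Gagliardo--Nirenberg bound \eqref{eqGNA}; I would isolate it as a separate lemma and otherwise follow the variational scheme of \cite{BHHZ23t}.
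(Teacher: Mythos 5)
Your opening step is sound and coincides with what the paper actually proves: the identification of $S'_{\nu}(u)=0$ with the Euler--Lagrange equation \eqref{eq:AV} at $\lam=\nu$, and the Lagrange multiplier argument for a constrained minimizer, are exactly the computation the paper records after the proposition (differentiating $t\mapsto E_{A,V}(\phi+tw)$ for $w$ tangent to $S_c$). The gap is everything beyond that. Your reduction of the least-action property to the scalar claim that $a\mapsto I_a+\nu a^2$ is minimized at $a=c$ is a restatement of the difficulty rather than a proof, and as written it cannot be closed by the tools you invoke. First, the comparison $S_{\nu}(\phi)\ge I_{c_\phi}+\nu c_\phi^2$ is vacuous whenever $c_\phi$ exceeds the threshold $C_0^{-d/4}\norm{Q_0}_2$, since there $I_{c_\phi}=-\iy$; a nontrivial critical point of $S_{\nu}$ of large mass is therefore not controlled at all by your inequality. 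Second, the envelope identity $\frac{d}{da}I_a=-2\nu_a a$ presupposes that $a\mapsto I_a$ is differentiable and that the multiplier $\nu_a$ is a well-defined function of $a$; but Remark \ref{re:nonE>c} records that minimizers need not be unique, so $\nu_a$ may be multivalued, and no regularity or concavity of the value function is established anywhere in the paper. Third, in the converse direction the ``matching of frequencies'' $\nu_v=\nu$ is asserted, not proved: the minimizer $v$ at mass $c=\norm{u}_2$ produced by Theorem \ref{T-Existence} comes with its own multiplier, and unless that multiplier equals $\nu$ the function $v$ does not lie in the admissible set $\{\phi\ne 0:\ S'_{\nu}(\phi)=0\}$, so the minimality of $S_{\nu}(u)$ gives you nothing about $v$.

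You have correctly isolated where the work lies, but the proposal defers precisely that work to an unproved lemma (``I expect this to hold''). Note that the paper itself does not carry out this argument either: it gives only the Lagrange-multiplier computation and otherwise defers the full equivalence to \cite[Theorem 6.1]{LZZ23t}. As it stands, your argument establishes only that a ground state solution satisfies $S'_{\nu}(u)=0$ with $\nu=\lam$; both the minimality of $S_{\nu}(u)$ among nontrivial critical points and the converse implication remain open.
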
 
The description of $u=Q_{A,V}$  in Proposition \ref{p:Snu:gss} can be proven in a manner similar to that in which 
the fourth-order NLS is treated \cite[Theorem 6.1]{LZZ23t}. 

Alternatively,  let $\phi\in S_c$ be a minimizer of (\ref{EAV:Ic}).   
For any $w$ in the tangent space $T_\phi$ to the sphere $S_c$ in $\Sigma$ %with $L^2$- inner product ( u,w)_{L^2}=0 
the vector $\phi+tw$ has square norm 
$\norm{\phi+tw}_2^2=c^2+t^2\norm{w}^2_2$.  
As $t\searrow 0$, it is a ``sequence'' of decreasing minimizing functions. 
Hence $\phi$ is a critical point of the energy functional  
$t\mapsto E_{A,V}(\phi+tw)$.
An easy calculation shows that, since $w\in \mathrm{span}\{u\}^\perp\cap \Sigma$, i.e., $( \phi,w)_{L^2}=0$,
\begin{align*} 
& \frac{d}{dt} E_{A,V}(\phi+t w)\big\vert_{t=0}\\
=&2\Re\left( -\frac12\De_A \phi+  V\phi + G'(|\phi |^2) \phi , w\right)_{L^2}=0\,.
\end{align*} %noting   L_A=iA\cdot \nabla$ is selfadjoint if dive A=0 
% w to i w ->. \Im (....) =0
This implies that for some constant $\nu\in\R$ 
\begin{align*}
 -\frac12\De_A \phi+  V\phi + G'(x,|\phi|^2) \phi =-\nu \phi\,. 
\end{align*}

\subsection{Proof of Theorem \ref{t2:orb-Oc} }\label{ss:orb-stab-u} 
 We  prove the orbital stability of the set of standing waves of \eqref{eqNls_AV}. 
 We will assume $A,V$ and $N$ satisfy Hypothesis \ref{hy:V0-B0-G1}, 
which  gives rise to the local existence theory in Proposition \ref{p:lwp:AV} 
 along with the mass-energy conservation laws (\ref{mass-AV})-(\ref{Eu:AV}). 
The line of proof % technical observation is  once we show  mini  sequence  relatively compact  \Sigma  orbital stability 
follows from a standard concentration compactness argument.   
Comparing  the notions for orbital stability in this paper and \cite{CazEs88}, 
Definition \ref{def:orb-AV}  
is a mild stability result for the set of minimizers  while \cite{CazEs88} refers to  the set of minimizers with more specific symmetry about certain $m$-vortex type $z$-axial symmetric functions in $\Sigma_{A,0}=\sH^1_A$. 

\begin{proof}[Proof of Theorem \ref{t2:orb-Oc}]  
In order to show \eqref{eOS:magV}, we proceed  by contradiction argument. 
%from Th \ref{t:mini-E-Ic} we know $\cO_c \ne\emptyset
Suppose $\cO_c$ is not stable, then  there exist $w \in \cO_c$,
$\varepsilon_0 > 0$ and a sequence $\{ \psi_{n,0} \} \subset \Sigma$ such that
	\begin{equation}
	\| \psi_{n,0} - w \|_\Sigma \rightarrow 0 \ \text{as} \ n \rightarrow \infty \
	\text{but} \ \inf_{\phi \in \cO_c} \| \psi_n(t_n, \cdot) - \phi \|_\Sigma \geq \varepsilon_0
	\label{en:phi_0-Oc}
	\end{equation}
for some sequence $\{ t_n \} \subset \mathbb{R}$, 
where $\psi_n$ is the solution of \eqref{eqNls_AV} corresponding to the initial data $\psi_{n,0}$.  

Let $w_n := \psi_n(t_n, \cdot)$.   % v \in S_c$ and $E_{\Omega, \gamma}(v) = I_c 
Then $(w_n)$ is a minimizing sequence such that  % the continuity of $\| \cdot \|_2
 $\| w_n \|_2 \rightarrow c$ and $E_{A, V}(w_n) \rightarrow I_c$ by  
the continuity of $E_{A,V}$ on $\Sigma$ and   the {conservation laws (\ref{mass-AV})-(\ref{Eu:AV})}.   
According to Remark \ref{rem:mini-compact},
  $\{ w_n \}$ contains a subsequence $(w_{n_j})$ converging  to some $\td{w}$ in $\Sigma$ with
$\| \td{w} \|_2 = c$ and $E_{A,V} (\td{w}) = I_c$. This means that  $\td{w} \in \cO_c$ 
and as a consequence  
\[
\inf_{\phi\in \cO_c} \| \psi_{n_j}(t_{n_j}, \cdot) - \phi \|_\Sigma \leq \| w_{n_j} - \td{w} \|_\Sigma\to 0,
\]
which contradicts \eqref{en:phi_0-Oc}.  
\end{proof}

%\begin{align} &O_c = \{ z \in S_c: E_{\Omega, \gamma}(z) = I_c \}.\label{E:Zc-ScE_foc}\\
%=&\{u \in \sH^1: E_{\Om, \ga}(u) = I_c, \norm{u}_2=c \} \end{align}  I_c  as in \eqref{e:min-Eu-c}
 %I_c = \inf \{ E_{\Omega, \ga}(u): u \in\sH^1, \norm{u}_2=c  S_c = u \in \sH^1: 

% \eqref{E:hartree-U3-foc} since a convolution commutes with translation and rotation I verified that the associated elliptic problem whose solutions must have rotation and translation invariance (the phase invariance is obvious). This observation will predict that the focusing Hartree eq admits the same orbital stability 
\begin{remark}  % \ref{E:hartreeV-U3-defoc} with multiple (external) Coulomb potential having different localization centers,
 %    Given any $\veps>0  \exists \delta   \inf_{\theta\in \R}\| u_0 - e^{i\theta} u_0(\cdot) \|_{H^1} < \de
%\To& \inf_{t\ge 0}\inf_{\theta\in R} \| u(t, \cdot) -e^{i\theta} u_0(\cdot)\|_{H^1}   
	% [stabi/instability of standing wave   uniqueness    symmetry breaking  
Theorem \ref{t2:orb-Oc} says  that if the initial data $\psi_0$  is sufficiently close to some 
$\vphi=Q_{A,V}\in \cO_c$  
for $c$ below $\norm{Q_0}_2$,  
then $\psi(t,\cdot)$ is close to some $\phi=\phi^t\in \cO_c$ for all time.     
It is known that the orbital stability of a single standing wave $e^{i\lam t}\vphi(x)$ is related to the uniqueness of
the ground state $\vphi$. 
Concerning the uniqueness for magnetic g.s.s. of (\ref{eq:AV}), an apparent symmetry is 
the phase invariant $e^{i\theta}\cO_c=\cO_c$ for $\theta\in\R$.
There is symmetry breaking:  Owing to the presence of $(A,V)$, 
translation, rotation and scaling invariance are lost. 
Relating Remark \ref{re:nonE>c}, the multiplicity of g.s.s. might leave the question open
regarding the  stability/instability for a single standing wave.  
Nevertheless, it is possible to study the stability property under certain constraint to   
some submanifold of $\Sigma$ when $V(x)$ admits cylindrical symmetry, say, cf. \cite{CazEs88,EsLion89}.  
\end{remark} 

\begin{remark} Let $p=1+4/d$. 
We can show a weaker version of the orbital stability of a single standing wave on any finite time interval.  
In fact,  suppose $\vphi_0\in \cO_c$,
$c<C_0^{-\frac{d}{4}}\norm{Q_0}_2$ is a ground state satisfying (\ref{eq:AV}).
We may assume $\psi_{0}$  is sufficiently close to $\vphi_0$ in $\Sigma$
so that a global solution exists. 
Let $\psi(t):=\psi(t,x)$ and $\vphi(t):=\vphi(t,x)=e^{i\lam t}\vphi_0$ be the corresponding solutions of 
Eq. (\ref{eqNls_AV}). 
For any fixed $T>0$ let $I_T:=[-T, T]$ and 
divide $I_T:=\cup_k I_k$, $I_k=[t_{k-1},t_k]$, $k=1,\dots, K_0$ 
such that all $|I_k|=\de$, $\de>0$ to be determined in a moment. 
In view of (\ref{u=duhamel}), if $t\in I_k$
\begin{align*}  
& \psi(t)-\vphi(t)=U(t)(\psi_{0}-\vphi_0)- i\int_{t_{k-1}}^t U(t-s) \left( N(x,\psi)-N(x,\vphi )\right) ds .%\label{Phi(u):duhamel} 
\end{align*}  
In light of (\ref{qr:stri-L}) and (\ref{eZ:u-v}), 
we have if $|I_k|<\de:=\de(\norm{\vphi_0}_\Sigma) >0$ sufficiently small, then % strichartz 
\begin{align*}  
&\norm{ \psi(t)-\vphi(t)}_{Z_k}\le C\norm{\psi_{0}-\vphi_0}_\Sigma
+   \frac12\norm{ \psi(t)-\vphi(t)  }_{Z_k} %\label{e: }
\end{align*}  
where $Z_k:=L^\iy(I_k, L^2)\cap L^q(I_k, L^r) $.  
%$X_I:=L^\iy(I, \sH^1)\cap L^q(I, \sH^{1,r})$.    Subsection \ref{p:lwp:AV} 
This implies that 
\begin{align}  
&\norm{ \psi(t)-\vphi(t)}_{L^\iy(I_T, L^2)\cap L^q(I_T, L^r) }\le 2CK_0 \norm{\psi_{0}-\vphi_0}_\Sigma\,.
\end{align}  
Thus we have shown that  given any interval $I_T=[-T,T]$, the solution $\psi(t,\cdot)$ stays  close to $e^{i\lam t}\vphi_0$ 
on $I_T$ with respect to the metric $L^\iy(I_T, L^2)\cap L^q(I_T, L^r) $ as soon as $\psi_0$ is sufficiently close to $\vphi_0$ in $\Sigma$.  
%here since $\psi_0$ can be arbitrarily close to $\vphi$,     $\norm{\vphi_0}_\Sigma
\end{remark}  
%\end{document}

% however the infinite-dimensional linear and nonlinear eigen-manifolds,suggest that the stability for a single standing wave might not be the case. The true picture is the stability for a set of ground state solutions as we will show in Sections \r  \ga=\Om  Th. \eqref{t:AVg-Stable}  for more detailed description

\section{Non-existence of g.s.s. in fast rotation regime}\label{s:non-E:AV}
In this section we show Theorem \ref{t3:non-E}, which is a non-existence result 
of g.s.s. for (\ref{eq:AV})  when the condition $(V_0)$  is not verified. 
In fact, we  consider the RNLS (\ref{eU:Om-ga}) where the condition $(N_0)$ is assumed as in Hypothesis \ref{hy:G0-AV}. 
We shall construct a sequence of functions $(\psi_m)\subset S_1$ %in (\ref{ega:psi_m}) 
such that $E_{\Om,\ga}(\psi_m)$ tends to negative infinity.
Then,  Problem (\ref{EAV:Ic}) does not admit a solution since $I_c=-\iy$ is not achievable. 
In the first scenario let $\ga_j>0$ for all $j$ and $|\Om|> \min(\ga_1,\ga_2)$, say. 
In view of (\ref{E3:OmV}) %eAV:u(t)})
 and $(\ref{ecase:VgaOm})$,  
it is sufficient to take up the 3D case where $A=-\Om x^\perp$, 
$V_\ga(x)=\frac12\sum_j \ga_j^2x_j^2$ so that 
$V(x)=\frac12\sum_{j=1}^2 (\ga_j^2-\Om^2)x_j^2 + \frac12 \ga_3^2x_3^2$.
If $d=2$, we can simply let $\ga_3=0$. 
The other scenario is $\ga_{j_0}<0$ for some $j_0$, 
in which case $V_\ga$ is repulsive on the $x_{j_0}$-direction. 
Evidently, in both scenarios % if $|\Om|> \min(\ga_1,\ga_2)$, 
 $V(x)\not\rightarrow \iy$. 
In particular, our examples include the case  of 
 an anisotropic quadratic potential $V_{\td{\ga}}$, $\td{\ga}=(\ga_1,\ga_2,\ga_3)\in\R^3$, $\ga_1\ne \ga_2$, cf. \eqref{eV:ga-sgn}.    

\subsection{$V_\ga(x)=\frac{\ga^2}{2}|x|^2$ isotropic, $d=2,3$} \label{ss:non-E:OmV-isotropic}  %general nonlinearity $N(x,u)
If $|\Om|> \ga:=\ga_1=\ga_2>0$,  we recall a 2D example  from  \cite{BHHZ23t},  see also \cite{BaoCai2013num}. %Sei02  
 Define  for $m\in\Z$ and $x=(r,\theta)$ using polar coordinates 
\begin{align} 
&\psi_m(x)=\frac{\ga^{(\frac{|m|+1}{2})}}{\sqrt{\pi (|m|)!}} |x|^{|m|} e^{-\ga |x|^2/2} e^{im\theta}\,. \label{ega:psi_m} 
\end{align}  
Then $\norm{\psi_m}_2=1$ for all $m\in \Z$.  %\begin{align*} &\norm{\nabla \psi_m}_2\lesssim 1?\\
%&\norm{x \psi_m}_2\lesssim 1     \end{align*}
 %From %\href{http://mathworld.wolfram.com/GammaFunction.html}{Gamma function properties}, we have 
%\begin{align*} &\int |\psi_m|^2dx=\frac{\ga^{2(\frac{|m|}{2}+\frac12)}}{\pi (|m|)!} \int_{\R^2} |x|^{2|m|} e^{-\ga |x|^2}dx\\
%=&\ga^{|m|+1} \frac{1}{ (|m|)!} \int_0^\iy u^{|m|} e^{-\ga u}du=1. \end{align*}
We compute
\begin{align*}
& \pa_{x_1} (\psi_m(r,\theta)) =\frac{\ga^{\frac{|m|+1}{2}}}{\sqrt{\pi (|m|)!}}  e^{im\theta} r^{|m|-1} e^{-\ga r^2/2}\left( (|m|-\ga r^2)\cos\theta-i m \sin\theta \right)  \\
& \pa_{x_2} (\psi_m(r,\theta)) =\frac{\ga^{\frac{|m|+1}{2}}}{\sqrt{\pi (|m|)!}}  e^{im\theta} r^{|m|-1} e^{-\ga r^2/2}\left( (|m|-\ga r^2)\sin\theta+i m \cos\theta \right).
%=&\ga^{|m|+1} \frac{1}{ (|m|)!} \int_0^\iy u^{|m|} e^{-\ga u}du=1.
%\to& |\nabla \psi_m|^2 =\frac{\ga^{|m|+1}}{\pi (|m|)!}  r^{2|m|-2} e^{-\ga r^2}  ( (|m|-\ga r^2)^2+m^2 )
\end{align*}
%where note that for complexed-value  |\nabla u|^2=\nabla u\cdot \nabla \bar{u}=|\pa_1 u|^2+\cdots +|\pa_N u|^2
% $A=|\Om|\la -y,x\ra    iA\cdot \nabla= -i \Om r \pa_\theta  % $\ga>0, m\in\Z$
Hence,
\begin{align*} & \int |\nabla \psi_m|^2 dx =2\pi\int_0^\iy \frac{\ga^{|m|+1}}{\pi (|m|)!}  r^{2|m|-2} e^{-\ga r^2}  ( (|m|-\ga r^2)^2+m^2 )rdr 
%=&2\pi\int_0^\iy \frac{\ga^{|m|+1}}{\pi (|m|)!}  r^{2|m|-2} e^{-\ga r^2}   (|m|-\ga r^2)^2 rdr\\
%+&{\color{blue}2\pi\int_0^\iy \frac{\ga^{|m|+1}}{\pi (|m|)!}  r^{2|m|-2} e^{-\ga r^2}  m^2 rdr}\label{e:d2theta-kinetic}\\
%=&\frac{m^2}{(|m|)!}\ga\Ga(|m|)-\frac{2|m|\ga}{(|m|)!}\Ga(|m|+1)+\frac{\ga}{(|m|)!}\ga\Ga(|m|+2)+ {\color{blue}|m|\ga}\\
%=&(2|m|+1-2|m|)\ga+ |m|\ga
= (|m|+1)\ga.  \qquad  %S.Z. oct.20th \;(checked)
\end{align*}

\iffalse  compute $\nabla u=\la  u_x, u_y\ra$ expressing in polar coordinates, or, writing $u=f(r)e^{im\theta}$ and computing
\begin{align*} 
& \nabla u(r,\theta)= \nabla ( f(r) e^{i m\theta})= e^{i m\theta}\nabla_x f(r)+ f(r) \nabla_x e^{i m\theta} \\
=& e^{im\theta} \left( \frac{f'(r)}{r}{\bf x}+ im  \frac{f(r)}{r^2} (+{\bf x})^\perp \right)\\
\To& |\nabla u|^2=f'(r)^2+m^2 \frac{f(r)^2}{r^2}.
\end{align*} 
here ${\bf x}^\perp=\la -y,+x\ra$ and $f$ is real-valued. 
Integrating we obtain
\begin{align*} 
&2\pi\int  f'(r)^2 rdr=2\pi\frac{\ga^{|m|+1}}{\pi (|m|)!}\int_0^\iy ( r^{2|m|-2} e^{-\ga r^2} ) (|m|-\ga r^2)^2 rdr\\
& 2\pi m^2 \int_0^\iy \frac{f(r)^2}{r^2}rdr= 2\pi m^2 \frac{\ga^{|m|+1}}{\pi (|m|)!} \int_0^\iy  r^{2|m|-1} e^{-\ga r^2} dr
\end{align*} \fi % a\in (2, \iy)    G(v)= C_0 v+Kv^{a/2} 
 It is also easy to calculate %\edz{ $\psi_m(r)$ denotes the `radial part' of $\psi_m(x)=f(r)e^{im\theta}$}
\begin{align*}
%&\frac12\int  |\nabla \psi_m|^2 dx=\frac{|m|+1}{2} \ga\notag\\
&\int V_\ga(x) |\psi_m|^2 dx={2\pi}\int_0^\iy(\frac{\ga^2}{2} r^2  |\psi_m|^2) rdr =\frac{|m|+1}{2}\ga \notag\\
& -i \Om \int ({ x}^\perp \cdot\nabla \psi_m) \overline{\psi_m}dx 
=i \Om\int_0^{2\pi} d\theta\int_0^\iy (\overline{\psi_m} \pa_\theta\psi_m ) rdr\notag\\
=&2\pi i \Om\frac{\ga^{|m|+1}}{\pi (|m|)!}\int_0^\iy r^{2|m|} e^{-\ga r^2} (im) rdr
= -m\Om . %\label{e:minusOm-m} %\textbf{checked} oct.20th
\end{align*} 
And, if $G(|u|^2)\le C_0 |u|^{2}+ \frac{2C_0}{p+1}|u|^{p+1}$, $p\in (1,\iy)$
\begin{align*}
&\int G(|\psi_m|^2)dx=C_0+\frac{2C_0}{a} \ga^{\frac{a}{2}-1} \pi^{\frac{3}{2}(1-\frac{a}{2})}  \frac{1}{2^{(\frac{a}{4}-1)}\sqrt{a} (\sqrt{|m|})^{\frac{a}{2}-1}} +o(1)  \,,
\end{align*}
where $a=p+1$, 
$i A\cdot \nabla %-i\Om { x}^\perp\cdot\nabla
=i\Om\pa_\theta$ and $-x_2\pa_{x_1}+x_1\pa_{x_2}=\pa_\theta$.  
Thus we obtain in view of (\ref{EAV:Q}), (\ref{G(xv)^p}) 
 \begin{align*}
&E_{\Om,\ga}(\psi_m) %=\frac12\int_{0}^{2\pi}\int_0^\iy (|\pa_r \psi_m|^2+\frac{{\color{red}m^2}}{r^2} |{\color{blue}\psi_m|^2})rdrd\theta \\     +& \int_{0}^{2\pi}\int_0^\iy(\frac{\ga^2}{2} r^2 |\psi_m|^2+K |\psi_m|^2 +K|\psi_m|^a  - i |\Om| \overline{\psi_m} \pa_\theta\psi_m ) rdrd\theta\\
= \frac{|m|+1}{2}\ga+\frac{|m|+1}{2}\ga-m\Om  +C_0 + o(1)\\ 
%=&(|m|+1)\ga-|\Om| m +K+2\pi K\int_0^\iy |\psi_m(r)|^a rdr  \\
=&(|m|+1)\ga- m \Om  +C_0+o(1). \quad % a>2
\end{align*} 
This suggests that $E_{\Om,\ga}(\psi_m)\to-\iy$ as $|m|\to \iy$ and $m\Om>0$. 

%\subsubsection{$V_\ga(x)=\frac{\ga^2}{2}|x|^2$ isotropic, 
{If $d=3$,  $\Om>\ga$, $\ga:=\ga_1=\ga_2=\ga_3>0$, using cylindrical coordinates $(r,\theta,x_3)$ we}  
{  define
\[
\phi_m(r,\theta, x_3)= {\left( \frac{\gamma}{\pi} \right)^{\frac14}} \psi_m(r,\theta) e^{-\frac{\gamma x_3^2}{2}}\,.  
\]
Then $\norm{\phi_m}_2=1$ for all $m\in \Z$. We compute
\begin{align*}
 \pa_{x_1} (\phi_m(r,\theta,x_3)) &={\left( \frac{\gamma}{\pi} \right)^{\frac14}}
 \pa_{x_1}(\psi_m(r,\theta)) e^{-\frac{\gamma x_3{^2}}{2}}\\
 \pa_{x_2} (\phi_m(r,\theta,x_3)) &={\left( \frac{\gamma}{\pi} \right)^{\frac14}}
 \pa_{x_2}(\psi_m(r,\theta)) e^{-\frac{\gamma x_3^2}{2}}\\
 \pa_{x_3} (\phi_m(r,\theta,x_3)) &=-{\left( \frac{\gamma}{\pi} \right)^{\frac14}} 
 \psi_m(r,\theta) \gamma x_3 e^{-\frac{\gamma x_3{^2}}{2}} 
\end{align*}
Using calculations for $\psi_m$ we obtain
\begin{align*}  \int |\nabla {\phi}_m|^2 dx &= {\left( \frac{\gamma}{\pi} \right)^{\frac12}} \int |\nabla \psi_m|^2 e^{-\gamma x_3{^2}} dx + \int \ga^2 x_3^2 |\phi_m|^2 dx = (|m|+1)\ga + \ga^{3/2} \\
\int V_\ga(x) |\phi_m|^2 dx&= {(\frac{\ga}{\pi})^{\frac12}} \int V_\ga(x_1,x_2) |\psi_m|^2 e^{-\gamma x_3{^2}} dx + {\left(\frac{\ga}{\pi}\right)^{\frac12}} \int \frac{\ga^2 x_3^2}{2} |\psi_m|^2 e^{-\gamma x_3{^2}} dx\\
&= \frac{(|m|+1)\ga+\ga^{3/2}}{2}\\
-i \Om \int ({ x}^\perp \cdot\nabla \phi_m) \overline{\phi_m}dx&=-i \Om \int ({ x}^\perp \cdot\nabla \psi_m) \overline{\psi_m}dx = -m\Omega\,,
\end{align*}
and, %if $G(|u|^2)\le C_0 |u|^{2}+ \frac{2C_0}{p+1}|u|^{p+1}$, $p\in (1,\iy)
{
\begin{align*}
&\int G(|\phi_m|^2)dx=C_0+\frac{C_2(a,\gamma)}{(\sqrt{|m|})^{\frac{a}{2}-1}} +o(1)  %a>2
\end{align*}
}
where $a=p+1$. Thus, 
 \begin{align*}
E_{\Om,\ga}(\phi_m)&= (|m|+1)\ga- m \Om + {O}(1).
\end{align*} 
}
Therefore, $E_{\Om,\ga}(\psi_m)\to-\iy$ as $|m|\to \iy$ and $m\Om>0$.

% using $\De=\pa_{rr}+\frac{1}{r}\pa_r+\frac{1}{r^2}\pa_{\theta\theta}$ then I found  that by integration by parts, the same equation is valid: if $\psi_m=f(r) e^{im\theta}$ \begin{align*}
%& \int (-\De \psi_m)(\overline{\psi_m}) dx={2\pi}\int_0^\iy (|\pa_r f|^2+\frac{m^2}{r^2} |f(r)|^2)rdr \end{align*} }

%\text{positive angular momentum energy}  & m< -\iy

%\subsection{Non-existence of g.s.s}\label{s:non-Q}   show Theorem \ref{t3:non-E} by 
% giving  examples in $\R^3$ to show the non-existence of g.s.s. for (\ref{EAV:Ic}) in mass critical  
% Note that by the form of $A=Mx$, $M=M_d$ in Section \ref{ss:OmgaRNLS}, 
 
\subsection{$V_\ga$ non-isotropic:  $|\Om|>\ga_1=\min(\ga_1,\ga_2)>0$, $d=3$} \label{ss:non-ex:Om>ga1}  
Let the nonlinearity $N(u)=N(x,u)$ verify  (\ref{G(xv)^p}) with $p\in (1,\frac{d+2}{d-2})$.    
We define a sequence of functions $(\psi_m)_{m\in\Z}\subset\Sigma$ of vortex type
using the cylindrical coordinates $(x_1,x_2,x_3) = (r, \theta, x_3)$
\begin{equation*}\label{E:psi:3d}
    \psi_{m}(x)= C_m x_1^{|m|}e^{-\frac{\gamma(x)}{2}} e^{im\theta}\,,  
\end{equation*}
where $  \gamma(x)= \gamma_1 x_1^2 + \gamma_2 x_2^2+ \gamma_3 x_3^2$ and $C_m$ is chosen as 
\begin{equation*}%\label{eCm}
    C_m= \sqrt{\frac{\sqrt{\gamma_2\gamma_3}}{\pi}\frac{\gamma_1^{|m|+\frac{1}{2}}}{\Gamma(|m|+\frac{1}{2})}}
\end{equation*} 
so that \begin{align} \label{Cm:normal}
& |C_m|^2 \frac{\Gamma( |m|+\frac{1}{2}) }{\gamma_1^{ |m|+\frac{1}{2} } } \frac{\Gamma(\frac12) }{ \gamma_2^{\frac{1}{2}} } \frac{\Gamma(\frac12) }{ \gamma_3^{\frac{1}{2}} }=1.
\end{align}

Then for all $m\in\Z$
\begin{equation*}
    \begin{split}
        \norm{\psi_m}_2^2&= C_m^2\int_{\R^3} x_1^{2|m|}e^{-\gamma(x)} dx\\ 
        &= C_m^2\int_{\R} x_1^{2|m|}e^{-\gamma_1x_1^2} dx_1 \int_{\R} e^{-\gamma_2x_2^2} dx_2 \int_{\R} e^{-\gamma_3x_3^2} dx_3\\
        &=\frac{\sqrt{\gamma_2\gamma_3}}{\pi}\frac{\gamma_1^{|m|+\frac{1}{2}}}{\Gamma(|m|+\frac{1}{2})} \frac{\Gamma(|m|+\frac{1}{2})}{\gamma_1^{|m|+\frac{1}{2}}} \sqrt{\frac{\pi}{\gamma_2}} \sqrt{\frac{\pi}{\gamma_3}}=1.
    \end{split}
\end{equation*}
We see that $C_m$  are normalizing constants. 
Next, we consider the energy (\ref{E3:OmV})
\begin{equation*}
   \begin{split}
        E_{\Omega,\ga}(\psi_{m})&=\frac{1}{2}\int \vert\nabla \psi_m\vert^2 + \int V_\ga(x)|\psi_{m}|^2 \\ 
        +&\int G(|\psi_m|^2) +\int \overline{\psi_{m}} L_A (\psi_{m}) .  
   \end{split}
\end{equation*}
It is easy to calculate that 
\begin{equation}\label{eqGrad}
    \nabla \psi_{m} = C_m x_1^{|m|-1}e^{-\frac{\gamma(x)}{2}} e^{im\theta} \begin{bmatrix}
|m|-\gamma_1 x_1^2-\frac{i m x_2 x_1}{x_1^2+x_2^2}\\
-\gamma_2 x_1x_2+\frac{i m x_1^2}{x_1^2+x_2^2}\\
-\gamma_3 x_3 x_1
\end{bmatrix}
\end{equation}
and 
 $( L_A\psi_m,\psi_m)= - i\Om (x^\perp\nabla\psi_m,\psi_m)$ such that   
\begin{equation}\label{eqLA}
    \begin{split}
        - (x^\perp\cdot\nabla \psi_m)\overline{\psi_m} &=\overline{\psi_m} 
        \begin{bmatrix} -x_2\\   x_1\\ 0 \end{bmatrix}
 \cdot\nabla \psi_{m}\\
 &=  |C_m|^2 x_1^{2|m|-1}e^{-\gamma(x)} [(\gamma_1-\gamma_2)x_1^2x_2- |m| x_2 + im x_1]\,.
    \end{split}
\end{equation}
So, for the kinetic  energy term we deduce from \eqref{eqGrad} 
\begin{equation*}
    \begin{split}
         \norm{\nabla \psi_m}_2^2&= C_m^2\int x_1^{2|m|-2}e^{-\gamma(x)} 
         \Big[(|m|-\gamma_1x_1^2)^2 + \frac{m^2 x_1^2}{x_1^2+x_2^2}  \\
         & +(\gamma_2x_2x_1)^2+(\gamma_3x_3x_1)^2\Big] dx\\
        &\leq C_m^2 \int\left[ m^2  x_1^{2|m|-2}e^{-\gamma(x)}  -2\gamma_1|m| x_1^{2|m|}e^{-\gamma(x)} + \gamma_1^2 x_1^{2|m|+2}e^{-\gamma(x)} \right.\\
        &\left.+m^2x_1^{2|m|-2}e^{-\gamma(x)} + \gamma_2^2 x_1^{2|m|}x_2^2e^{-\gamma(x)}+ \gamma_3^2 x_1^{2|m|}x_3^2e^{-\gamma(x)}\right]dx\\
        &= \frac{\gamma_1 m^2}{|m|-\frac12} - 2\gamma_1 |m| +\gamma_1\left(|m|+\frac12\right) +\frac{\gamma_1m^2}{|m|-\frac12} + \frac{\gamma_2+\gamma_3}{2}\\
        &= \frac{\gamma_1 m^2 + |m|\gamma_1-\frac{\gamma_1}{4}}{|m|-\frac12}  + \frac{\gamma_2+\gamma_3}{2} \,,
    \end{split}
\end{equation*}
where we have used (\ref{Cm:normal}) and the identities $\Ga(\frac12)=\sqrt{\pi}$ and
\begin{equation}\label{ey:n-alpha}
\int_{-\iy}^\iy y^{2n}e^{-\al y^2}dy=\frac1{\al^{n+\frac12} } \Ga(n+\frac12)\,,\ \qquad \al>0,\ n>-\frac12\,. 
\end{equation}
It follows that
\begin{align*}
& \frac12\int |\nabla\psi_m|^2dx\le \frac{\ga_1}{2} |m|+O(1)\quad\ as\ |m|\to \iy.
\end{align*}
In a similar manner, for the potential energy term we obtain
\begin{equation*}
    \begin{split}
        \int V_\ga(x) |\psi_m|^2dx&=\frac{\gamma_1|m|}{2} + \frac{\gamma_1+\gamma_2+\gamma_3}{4}\,.
    \end{split}
\end{equation*}
For the nonlinear energy term, we obtain, as $m\rightarrow\infty$
\begin{equation*} 
 \int G(|\psi_m|^2) =O(1).
\end{equation*}
%   - \frac{3}{5}\int_{\R^3} |\psi_{m}|^{\frac{10}{3}}dx &=(C_m^2)^{\frac{5}{3}} \int_{\R^3} (|x_1|^{2|m|})^{\frac53} e^{-\frac{5\gamma(x)}{3}} dx \\
  %      &=-\left(\frac35\right)^{\frac{5|m|}{3}+\frac52} \left(\frac{\gamma_1\gamma_2\gamma_3}{\pi^2}\right)^{\frac13} \frac{\Gamma\left(\frac{5|m|}{3}+\frac12\right)}{\left[\Gamma^{\frac53}\left(|m|+\frac12\right)\right]} \\
   %     &\rightarrow -\left(\frac35\right)^{\frac52} \frac{1}{\pi} \left(\frac{\gamma_1\gamma_2\gamma_3}{2}\right)^{\frac13}.
For the angular momentum energy, using  \eqref{eqLA} and  $\int_{-\iy}^\iy x_2 e^{-\gamma_2x_2^2}dx_2 = 0$ 
 we obtain
\begin{align*}
        &-i\Omega \int \overline{\psi_m} x^\perp\nabla (\psi_{m})  \\
  %&=i \Omega|C|^2_m \int x_1^{2|m|-1}e^{-\gamma(x)} [(\gamma_1-\gamma_2)x_1^2x_2 - |m| x_2 + i m x_1]
   &=i \Omega|C_m|^2 \int x_1^{2|m|-1}e^{-\gamma(x)}  ( i m x_1)= -m \Omega.
       \end{align*}
Combining all terms together we obtain
\begin{equation*}
    E_{\Omega,\ga}(\psi_{m})\leq \gamma_1|m| - \Omega m  + O(1).
\end{equation*}
Therefore, if $|\Omega|>\gamma_1>0,$ then  $E_{\Omega,\ga}(\psi_m) \rightarrow -\infty$ as 
$m\rightarrow \sgn(\Om)\infty$.  Thus, we conclude that Problem \ref{EAV:Ic} does not admit a ground state solution.   

\subsection{$V_\ga$ partially repulsive:  $\ga_{j_0}<0$, $\Om\in \R$, $d=3$}\label{ss:Vga:repulsive} 

We may assume $\ga_3<0$ and $\ga_1,\ga_2>0$.  
The case $\ga_1<0$ or $\ga_2<0$ can be dealt with similarly. 
By analogy with the previous subsection we define a sequence of  functions $(\psi_m) \subset\Sigma$ 
\begin{equation*}
    \psi_{m}(x)= C_m x_3^{|m|}e^{-\frac{\gamma(x)}{2}}\,, %e^{im\theta},  
\end{equation*}
where $ \gamma(x)= \gamma_1 x_1^2 + \gamma_2 x_2^2+ |\gamma_3| x_3^2$ and 
$C_m$ are normalizing constants such that
\begin{align}\label{eCm:ga3}
& |C_m|^2 \frac{\Gamma( |m|+\frac{1}{2}) }{|\gamma_3|^{ |m|+\frac{1}{2} } } \frac{\Gamma(\frac12) }{ \gamma_1^{\frac{1}{2}} } \frac{\Gamma(\frac12) }{ \gamma_2^{\frac{1}{2}} }=1.
\end{align}
Then we have $\norm{\psi_m}_2=1$. 
Also, we calculate that 
\begin{equation*}
    \nabla \psi_{m} = C_m x_3^{|m|-1}e^{-\frac{\gamma(x)}{2}} \begin{bmatrix}
-\gamma_1 x_1 x_3\\
-\gamma_2 x_2x_3\\
|m|-|\gamma_3| x_3^2
\end{bmatrix}
\end{equation*}
and 
 $( L_A\psi_m,\psi_m)=-i\Om ( x^\perp\nabla\psi_m, \psi_m)$ so that 
\begin{align*} 
-( x^\perp\nabla\psi_m, \psi_m) 
&=\overline{\psi_m} \begin{bmatrix} 
-x_2\\
x_1\\
0
\end{bmatrix} \cdot \nabla \psi_{m}\\
 &=  C_m^2 x_3^{2|m|}e^{-\gamma(x)} (\gamma_1-\gamma_2)x_1x_2\,.
\end{align*} 
In order to estimate  the energy $E_{\Omega,\ga}(\psi_m)$ in (\ref{E3:OmV}),   
%&=\frac{1}{2}\norm{\nabla \psi_m}_2^2 + \int_{\R^3} V(x)|\psi_{m}|^2 \\ 
 %       &- \frac{2}{p+1}\int |\psi_{m}|^{p+1}dx - \Omega \int_{\R^3} \overline{\psi_{m}} L_z (\psi_{m})  
  we compute by direct integration using (\ref{ey:n-alpha}) and (\ref{eCm:ga3}) 
\begin{equation*} 
    \begin{split}
     \norm{\nabla \psi_m}_2^2&= C_m^2\int x_3^{2|m|-2}e^{-\gamma(x)} 
     \Big[(\gamma_1x_3x_1)^2+  (\gamma_2x_2x_3)^2 + (|m|-|\gamma_3|x_3^2)^2 \Big] \\
        &=\left(\frac{|\gamma_3| m^2}{|m|-\frac12}-2|\gamma_3| |m|
        +|\gamma_3| (|m|+\frac12)+\frac{\gamma_1+\gamma_2}{2}\right)\\
 &=\frac{\gamma_1+\gamma_2+|\ga_3|}{2}+\frac{|\ga_3| |m|}{2|m|-1}\,. 
    \end{split}
\end{equation*}
For the potential energy we have
{
\begin{equation*}
    \begin{split}
        \int V_\ga(x) |\psi_m|^2&= \frac{C_m^2}{2}\int\left[ \gamma_1^2 x_1^2x_3^{2|m|} + \gamma_2^2 x_2^2x_3^{2|m|} - \gamma_3^2 x_3^{2|m|+2}\right]e^{-\gamma(x)} \\
        &=-\frac{ |\gamma_3| |m|}{2} + \frac{\gamma_1+\gamma_2-|\gamma_3|}{4}\,.
    \end{split}
\end{equation*}
}
For the nonlinear energy, %we show convergence to a constant for  m\rightarrow\iy
\begin{equation*} 
 \int G(|\psi_m|^2) =O(1)\qquad as\ |m|\to \iy.
  \end{equation*}  %   - \frac{3}{5}\int |\psi_{m}|^{\frac{10}{3}}dx &=(C_m^2)^{\frac{5}{3}} \int (|x_3|^{2|m|})^{\frac53} e^{-\frac{5\ga(x)}{3}} dx \\
  %      &=-\left(\frac35\right)^{\frac{5|m|}{3}+\frac52} \left(\frac{\gamma_1\gamma_2\gamma_3}{\pi^2}\right)^{\frac13} \frac{\Ga\left(\frac{5|m|}{3}+\frac12\right)}{\left[\Gamma^{\frac53}\left(|m|+\frac12\right)\right]} \\
    %    &\rightarrow -\left(\frac35\right)^{\frac52} \frac{1}{\pi} \left(\frac{\gamma_1\gamma_2\gamma_3}{2}\right)^{\frac13}
  Finally, %using  $\int^\iy_{-\iy} x_2 e^{-\gamma_2x_2^2}dx = 0$ 
  the angular momentum energy
\begin{equation*}
    \begin{split}
        -i\Omega \int \overline{\psi_{m}} x^\perp\nabla (\psi_{m}) 
        &=i\Om  C^2_m \int x_3^{2|m|}e^{-\gamma(x)} (\gamma_1-\gamma_2)x_1x_2 =0.
    \end{split}
\end{equation*} 
Combining all terms together  we obtain
\begin{equation*}
    E_{\Omega,\ga}(\psi_{m}) = -\frac{|\gamma_3| |m|}{2}  + O(1)\to -\iy \qquad as\  |m|\rightarrow \infty.
\end{equation*}  % E_{\Omega,\ga}(\psi_{m})  

{
The case $\ga_1<0$ or $\ga_2<0$ requires some modification of $\psi_m$. So, for $\ga_1<0$ we define
\begin{equation*}
    \psi_{m}(x)= C_m x_1^{|m|}e^{-\frac{\gamma(x)}{2}}\,, %e^{im\theta},  
\end{equation*}
where $ \gamma(x)= |\gamma_1| x_1^2 + \gamma_2 x_2^2+ \gamma_3 x_3^2$ and 
$C_m$ are normalizing constants such that
\begin{align*}
& |C_m|^2 \frac{\Gamma( |m|+\frac{1}{2}) }{|\gamma_1|^{ |m|+\frac{1}{2} } } \frac{\Gamma(\frac12) }{ \gamma_3^{\frac{1}{2}} } \frac{\Gamma(\frac12) }{ \gamma_2^{\frac{1}{2}} }=1.
\end{align*}
Similarly, for $\ga_2<0$ we define
\begin{equation*}
    \psi_{m}(x)= C_m x_2^{|m|}e^{-\frac{\gamma(x)}{2}}\,, %e^{im\theta},  
\end{equation*}
where $ \gamma(x)= \gamma_1 x_1^2 + |\gamma_2| x_2^2+ \gamma_3 x_3^2$ and 
$C_m$ are normalizing constants such that
\begin{align*}
& |C_m|^2 \frac{\Gamma( |m|+\frac{1}{2}) }{|\gamma_2|^{ |m|+\frac{1}{2} } } \frac{\Gamma(\frac12) }{ \gamma_3^{\frac{1}{2}} } \frac{\Gamma(\frac12) }{ \gamma_1^{\frac{1}{2}} }=1.
\end{align*}
Note that for both cases the angular momentum energy still vanishes and all other calculations just change $O(1)$. Therefore, we get 
\begin{equation*}
    E_{\Omega,\ga}(\psi_{m}) = -\frac{|\gamma_1| |m|}{2}  + O(1)\to -\iy \qquad as\  |m|\rightarrow \infty
\end{equation*}
for $\gamma_1< 0$ and
\begin{equation*}
    E_{\Omega,\ga}(\psi_{m}) = -\frac{|\gamma_2| |m|}{2}  + O(1)\to -\iy \qquad as\  |m|\rightarrow \infty
\end{equation*}
for $\gamma_2< 0$. 
}

{
Also, we consider the case when several gammas are less than $0$. So, we choose $\psi_m$ that corresponds to one of $\gamma < 0$, but with slight modification $ \gamma(x)= |\gamma_1| x_1^2 + |\gamma_2| x_2^2+ |\gamma_3| x_3^2$. This change keeps $||\psi_m||_2=1$ and allows us to repeat calculations in previous cases to get $E_{\Omega,\ga}(\psi_{m})\to -\iy $ as $|m|\rightarrow \infty$. 
}

{Finally, if $\gamma_{j_1}<0$ and $\gamma_{j_2}=0$, then we define 
\begin{equation*}
    \psi_{m}(x)= C_m x_{j_1}^{|m|}e^{-\frac{\gamma(x)}{2}}\,, %e^{im\theta},  
\end{equation*}
where $ \gamma(x)= |\gamma_{j_1}| x_{j_1}^2 + x_{j_2}^2+ |\gamma_{j_3}| x_{j_3}^2$ and 
$C_m$ are normalizing constants such that
\begin{align*}
& |C_m|^2 \frac{\Gamma( |m|+\frac{1}{2}) }{|\gamma_{j_1}|^{ |m|+\frac{1}{2} } }  \Gamma(\frac12) 
 \frac{\Gamma(\frac12) }{ |\gamma_{j_3}|^{\frac{1}{2}} } =1.
\end{align*}
Such changes have effects only one constant $O(1)$ in the energy representation. Therefore, after repeating previous calculations we have
\begin{equation*}
    E_{\Omega,\ga}(\psi_{m}) = -\frac{|\gamma_{j_1}| |m|}{2}  + O(1)\to -\iy \qquad as\  |m|\rightarrow \infty.
\end{equation*}
}

%\end{document}

\section{Numerical simulations for ground states near threshold} \label{s:numQAV}
We present numerical results on the  profile solution of the time-independent RNLS (\ref{eU:Om-ga}) 
 with $p=3$, $d=2$ and   $x^\perp=\la x_2,-x_1\ra$  
\begin{equation}\label{eQ:OmGa} 
 -\frac{1}{2} \nabla^2 u+ V_\ga(x)u +\mu |u|^{p-1}u - i \Om x^\perp\cdot\nabla u=-\lam u 
	\end{equation}  
where we specify $N(u)=\mu |u|^{p-1} u$, $\mu=-1$. 
The numerics is carried out with varying parameters $\Om$, $\gamma=(\ga_1,\ga_2)$ and    
initial data below and above the  g.s.s.  $Q_0$ and $Q_{\Om,\ga}$ separately.   
To verify the analytical results in Theorem \ref{T-Existence} and \cite{LeoZ22n} about the existence 
and  dynamical (strong) instability near the ground state, the following cases  are examined  of \eqref{eQ:OmGa} and (\ref{RNLS_Omga}),  
respectively,  when
\begin{enumerate}
\item[(i)] $\Om=0.5$, $(\ga_1,\ga_2)=(1,\sqrt{2})$, or $(2,8)$  
\item[(ii)] $\Om=0.8$, $(\ga_1,\ga_2)=(1,\sqrt{2})$, or $(1,2)$.  
\end{enumerate} 
Recall that when $V_\ga$ is isotropic and $N(u)=-|u|^{p-1} u$, we have the following threshold result
in view of \cite[Theorems 1.1]{BHHZ23t}. 
 
\begin{proposition}[]\label{t:gwp-blup-Om} %Let $p=1+4/d=3$. 
Suppose $\psi_0\in \sH^1$. % Q_{\lam} $ be the positive radially symmetric solution of (\ref{e:grst-Q-lam2}) 
\begin{enumerate} 
\item[(a)] If $\Vert \psi_0\Vert_2< \Vert Q_0\Vert_2$, then (\ref{RNLS_Omga}) has a global  solution $\psi_0\mapsto \psi(t)$ in $\sH^1$. 
\item[(b)] Given any $c>1$,  there exists $\psi_0$ in $\sH^1$ with $\Vert \psi_0\Vert_2=c \Vert Q_0\Vert_2$ 
 such that $\psi_0\mapsto \psi(t)$ blows up  on some finite interval $[0,T_+)$, 
 i.e., $\Vert \nabla \psi(t)\Vert_2\to \iy $ as  $t\to T_{+}$. 
\end{enumerate} 
\end{proposition}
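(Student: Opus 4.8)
The plan is to settle both parts with the standard mass‑critical toolbox, working throughout in the isotropic confining regime $\ga_1=\ga_2=\ga>0$, $|\Om|<\ga$, where $V_\ga(x)=\frac{\ga^2}{2}|x|^2$, $A=-\Om x^\perp$, and, by \eqref{E3:OmV}, the conserved energy is
\[
E_{\Om,\ga}(\psi)=\tfrac12\norm{\nabla_A\psi}_2^2+\tfrac{\ga^2-\Om^2}{2}\int|x|^2|\psi|^2\,dx-\tfrac{2}{p+1}\norm{\psi}_{p+1}^{p+1},\qquad p=1+\tfrac4d .
\]
For part (a) I would combine the sharp magnetic Gagliardo--Nirenberg inequality (Lemma \ref{l:mGN_sharp}) with the conservation laws of Proposition \ref{p:lwp:AV}(d). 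Since $p+1=2+\frac4d$ gives $\frac{2}{p+1}=\frac{d}{d+2}$, hence $\frac{2}{p+1}C_{GN}=\frac1{2\norm{Q_0}_2^{4/d}}$, inequality \eqref{eqGNA} yields $\frac{2}{p+1}\norm{\psi}_{p+1}^{p+1}\le\frac12(\norm{\psi}_2/\norm{Q_0}_2)^{4/d}\norm{\nabla_A\psi}_2^2$, so that after using mass conservation
\[
E_{\Om,\ga}(\psi)\ge\tfrac12\Big(1-\big(\norm{\psi_0}_2/\norm{Q_0}_2\big)^{4/d}\Big)\norm{\nabla_A\psi}_2^2+\tfrac{\ga^2-\Om^2}{2}\int|x|^2|\psi|^2\,dx .
\]
When $\norm{\psi_0}_2<\norm{Q_0}_2$ and $|\Om|<\ga$ both coefficients are strictly positive, so energy conservation bounds $\norm{\nabla_A\psi(t)}_2$ and $\int|x|^2|\psi(t)|^2$ uniformly in $t$. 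Writing $\nabla\psi=\nabla_A\psi+iA\psi$ with $|A|\le\al_0(|V|+1)^{1/2}$ (together with the diamagnetic inequality, Lemma \ref{l:diamag}) converts this into a uniform bound on $\norm{\nabla\psi(t)}_2$, and the blow‑up alternative (Proposition \ref{p:lwp:AV}(e)) then forces $T_\pm=\iy$, i.e.\ global existence.

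For part (b) I would first remove the rotation. Because $V_\ga$ is isotropic, conjugation by the rotation of the spatial variable through angle $\Om t$ (the flow generated by the angular‑momentum term $L_A$) is a unitary that commutes with $-\frac12\De$, with $V_\ga$, and with the nonlinearity; it carries \eqref{RNLS_Omga} to the harmonic NLS $i\wtd\psi_t=-\frac12\De\wtd\psi+\frac{\ga^2}{2}|x|^2\wtd\psi-|\wtd\psi|^{p-1}\wtd\psi$ and preserves $\norm{\cdot}_2$, $\norm{\nabla\cdot}_2$, and $\norm{\cdot}_\iy$, hence transports finite‑time blow‑up verbatim. I would then apply the lens (pseudoconformal) transform of frequency $\ga$, which sends $\wtd\psi$ to a solution $v$ of the free mass‑critical equation $iv_s=-\frac12\De v-|v|^{p-1}v$ under the time change $s=\ga^{-1}\tan(\ga t)$; a blow‑up of $v$ at any finite $s^*$ produces a blow‑up of $\wtd\psi$ at $t^*=\ga^{-1}\arctan(\ga s^*)<\frac{\pi}{2\ga}$.

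It remains to exhibit a free solution of the prescribed mass that blows up. I would take $v(0)=cQ_0$ with $c>1$. The mass‑critical ground state realizes equality in \eqref{eqGN}, which forces $E_0(Q_0)=0$ and $\frac{2}{p+1}\norm{Q_0}_{p+1}^{p+1}=\frac12\norm{\nabla Q_0}_2^2$; hence $\norm{cQ_0}_2=c\norm{Q_0}_2$ while $E_0(cQ_0)=\frac12\norm{\nabla Q_0}_2^2(c^2-c^{p+1})<0$ because $p+1>2$. Since $Q_0$ decays exponentially, $|x|\,Q_0\in L^2$, so the mass‑critical virial identity $\frac{d^2}{ds^2}\int|x|^2|v|^2\,dx=8E_0(v)$ (a negative constant) forces $\int|x|^2|v|^2$ to vanish in finite time, whence $T_+<\iy$ for $v$ by the finite‑variance local theory. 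Unwinding the lens transform and the rotating frame then produces an $\sH^1$ datum, e.g.\ $\psi_0=cQ_0$, with $\norm{\psi_0}_2=c\norm{Q_0}_2$ whose RNLS evolution blows up at some $t^*<\iy$, which is (b).

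Part (a) is essentially routine once \eqref{eqGNA} is in hand; the only subtlety is the sign‑indefinite rotational term, which is absorbed by working with the magnetic energy and invoking $|\Om|<\ga$. The crux lies in the transfer of blow‑up in (b): one must check that the rotating frame and the lens transform map $\sH^1$ (finite‑variance) data to $\sH^1$ data compatibly with the local theory of Proposition \ref{p:lwp:AV}, and that finite variance is propagated so that the virial identity is licit; by contrast the energy‑sign computation and the virial identity itself are standard. An alternative to the lens transform is a direct modified virial identity for the harmonic trap, at the cost of verifying that the now oscillatory variance actually reaches zero before the trap refocuses.
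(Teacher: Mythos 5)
Your argument is correct, but be aware that the paper itself offers no proof of Proposition \ref{t:gwp-blup-Om}: it is quoted from \cite{BHHZ23t} (Theorem 1.1 there), so there is no internal argument to compare against, and what you have written is a self-contained version of the standard mass-critical proof. Part (a) is exactly the coercivity estimate \eqref{Eu>Sigma} specialized to $N(u)=-|u|^{p-1}u$ (so $C_0=1$ and the threshold is precisely $\norm{Q_0}_2$): your computation $\frac{2}{p+1}C_{GN}=\frac{1}{2\norm{Q_0}_2^{4/d}}$ checks out, the bound $\norm{\nabla\psi}_2\le\norm{\nabla_A\psi}_2+|\Om|\,\norm{|x|\,\psi}_2$ converts control of the magnetic energy into control of $\norm{\nabla\psi}_2$, and Proposition \ref{p:lwp:AV}(e) then gives globality; the hypothesis $|\Om|<\ga$ is indeed needed to keep the effective potential coefficient $\frac{\ga^2-\Om^2}{2}$ positive. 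Part (b) via the rotating frame (legitimate precisely because $V_\ga$ is isotropic, so the rotation generated by $L_A$ commutes with $-\frac12\De$, with $V_\ga$ and with the nonlinearity), followed by the lens transform and Glassey's virial argument for $v_0=cQ_0$, is the classical route; the identities $E_0(Q_0)=0$ and $\frac{2}{p+1}\norm{Q_0}_{p+1}^{p+1}=\frac12\norm{\nabla Q_0}_2^2$ do follow from the Pohozaev relations for \eqref{ground} with the paper's $\frac12\De$ normalization, and since both transforms reduce to the identity at $t=0$ the datum really is $\psi_0=cQ_0$ with $\norm{\psi_0}_2=c\norm{Q_0}_2$. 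Two minor quibbles: with the $\frac12\De$ normalization the free mass-critical virial identity reads $\frac{d^2}{ds^2}\int|x|^2|v|^2\,dx=4E_0(v)$ rather than $8E_0(v)$ (harmless, only the sign matters); and you should say explicitly that finite variance is propagated by the free flow so that Glassey's argument is licit, which is automatic here since $cQ_0$ decays exponentially and the local theory in $\Sigma$ preserves $\Sigma$.
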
 

\bigskip 
\subsection{Scaled profile $Q_0$ and simulations $p=3, d=2$} \label{ss:Q0-simulat} 
Let $Q_0$ be the free ground state solution of (\ref{ground}). 
Then, according to Theorem \ref{T-Existence}, $c_0:=\norm{Q_0}_2$ 
is the sharp threshold for \eqref{eQ:OmGa}, where  
the mass $M(Q_0)=c_0^2=5.85043$, cf. \cite[Eq.(1.12)]{LeoZ22n}. %wein1983sharp}.
% very close to  true value  mass $_0 at c=2.4151 while earlier he used $u_0= 2.4 Q_{\Om,V}$ at  c=1 
% Mass  [Q]=c^2=5.8327$ very close to The true value 5.85043 
Figures \ref{Q0:Om0ga0}-\ref{f:belowQ:Om0.5-ga2(1,2)} 
show that the solution $\psi(t)$ exists globally in time when $\psi_0$ is slightly below $Q_0$ with respect to the mass. 
Figure \ref{f:aboveQ0:Om0.5-ga2(1,2)} shows 
blowup solution occurs in a short time %with an amplitude 500+
when $\psi_0$ is only slightly above $Q_0$.  
The same paradigm occurs as is exhibited in Figures \ref{fig:0.99Q}-\ref{f:0.99Q-global}, 
and Figure \ref{1.03Q:Om0.8-ga2(1,4)}. 
% \psi can reach as higher amplitude as 30, which is global existence since \psi_0 has abs. 2.5
Thus  the threshold statements in Proposition \ref{t:gwp-blup-Om} are  numerically verified in our simulations.  
%with the critical number (the mass of  Q=Q_00    
%$5.85043=\int_{\R^2} |Q_{00}|^2$ from \cite[Eq.(1.12)]{LeoZ22n}.  % Applied Math., Eq.(1.12) %

%\footnote{From  $2.4188^2=5.85059344$ and $2.4187^2=5.85010969$, one can tell that  $2.4188 *Q_{\Om,V}$ would give  a ``threshold'' blowup which is slightly  above the mass of $Q_0$. 
%So, the mass level $c_0=2.41876$, or $c_0=2.4188$  should be the right $c$ for us to run simulationsÂ with $c_-/c+ }
 %ii) Also from Chris email   know the threshold for blowup with 
%$u_0= 2.4 Q_{Om,V}$ whose mass level $c=2.4$,  thus the $Mass[Q_{\Om,V}] = 5.76$. This Is in consistence with the 5.8 mass,  although numerical blowup  would require very slightly larger than  2.4 

To simulate  the dynamics near  $Q_0$, we design the numerical solver program 
based on a compilation of GPELab and %slightly modified 
ground state solution coding in \cite{AntDub2014gs} and \cite{Fibich_2015}. 
In the implementation, we first solve the ground state $Q=Q_0$ using the shooting method. 
Then, we use the scaled version of the saved g.s.s $Q_0$ 
 to run the evolution of the flow $\psi_0\mapsto \psi(t)$.  
The results on the initial time $t=0$ and the ending time  $t=2$  
are recorded and plotted in Figures \ref{psi(t):belowQ0}-\ref{f:aboveQ0:Om0.5-ga2(1,2)}.  
Examples are run per two set of parameters $[\Om,\ga,c, Q]$ respectively given by:
\begin{enumerate}
\item[(i)]  $ [0.5, (1,\sqrt{2}), 0.98, Q_0]$, 
and  $[0.5, (1,\sqrt{2}), 1.02, Q_0]$. 
\item[(ii)] $ [0.8, (1,{2}), 0.99, Q_0]$ and $ [0.8, (1,{2}), 1.03, Q_0]$. 
\end{enumerate}
The initial state is chosen as scaled $Q_0$ so that $M(\psi_0 ) = c^2\, M(Q_0)$, 
see \mbox{Figure \ref{gss_belowQ0}} to Figure \ref{1.03Q:Om0.8-ga2(1,4)}.   
% Run the same simulations and obtain the figures as follows for the existence and blow-up near 
% the g.s.s. $Q_0$, 
Here the profile $Q=Q_0$ %solves (\ref{ground}  `standard' eq \frac12 \Delta Q  + |Q|^{2}Q =Q
 can be numerically solved  using the shooting method from  \cite[pp.125-145]{Fibich_2015}.  

 \begin{figure}[H]  %[! htb] -  option : might be optimal 
\includegraphics[width=0.48\textwidth]{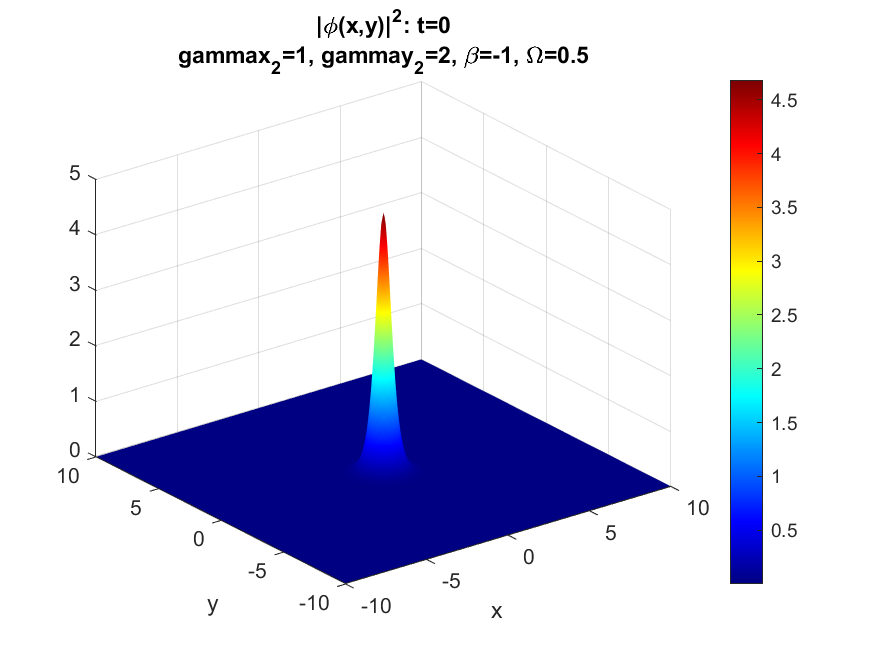}
\includegraphics[width=0.480\textwidth]{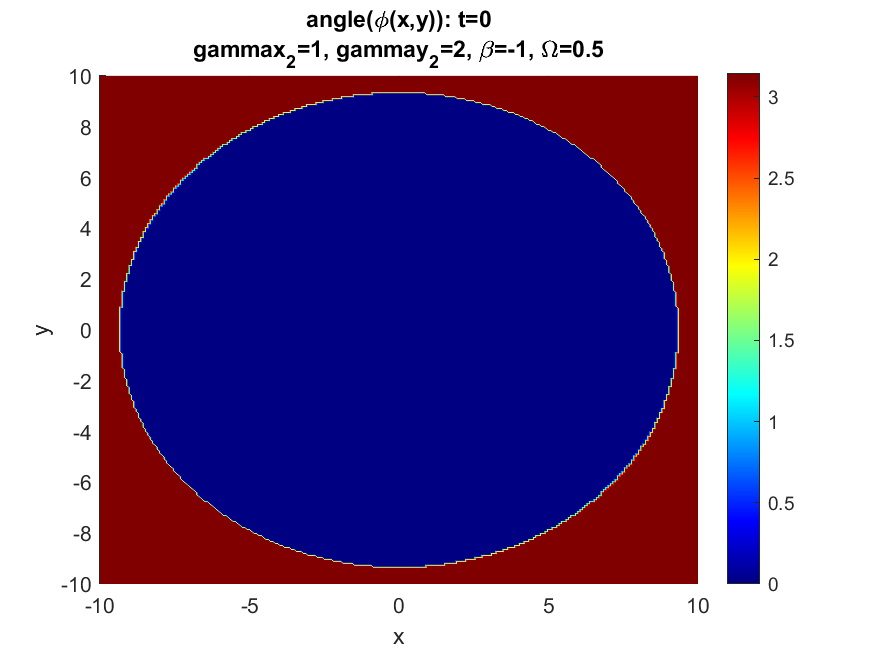} 
\caption{\label{gss_belowQ0} Scaled g.s.s.  $\psi_0=0.98Q_0$.  %at $t= 0.0, \Om=0=\ga
 Left: plot of the modular $|\psi_{0}|^2$.\quad  
Right: plot of the phase for $\psi_0$} 
\label{Q0:Om0ga0} %.5-ga2(1,2)} 
\end{figure}   
%\end{document}
 
 %for Q_OmGamma anisotropic  g.s.s. when $c<\norm{Q_0}_2$
\begin{figure}[H]  
%\begin{center}
\includegraphics[width=0.48\textwidth]{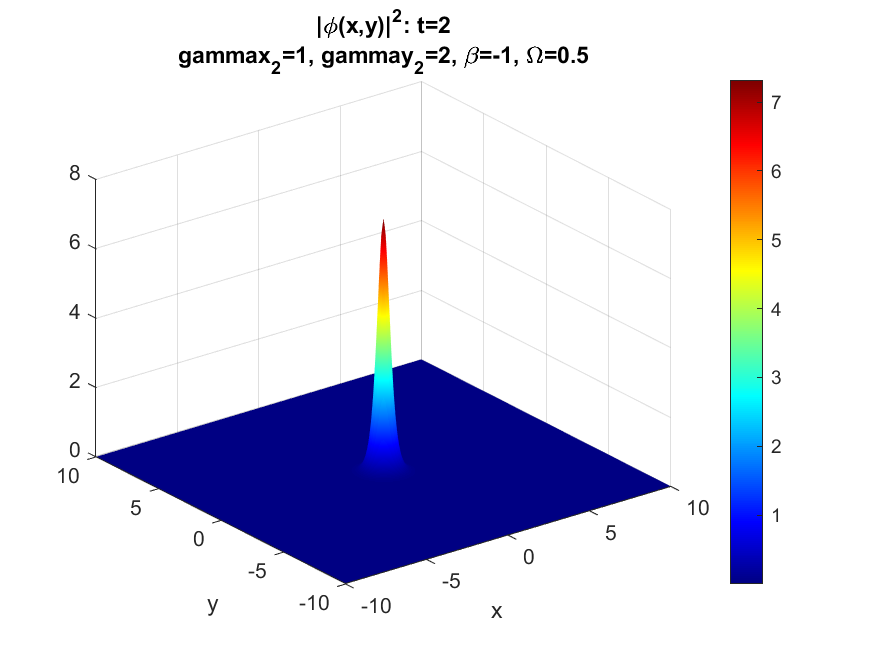} 
\includegraphics[width=0.48\textwidth]{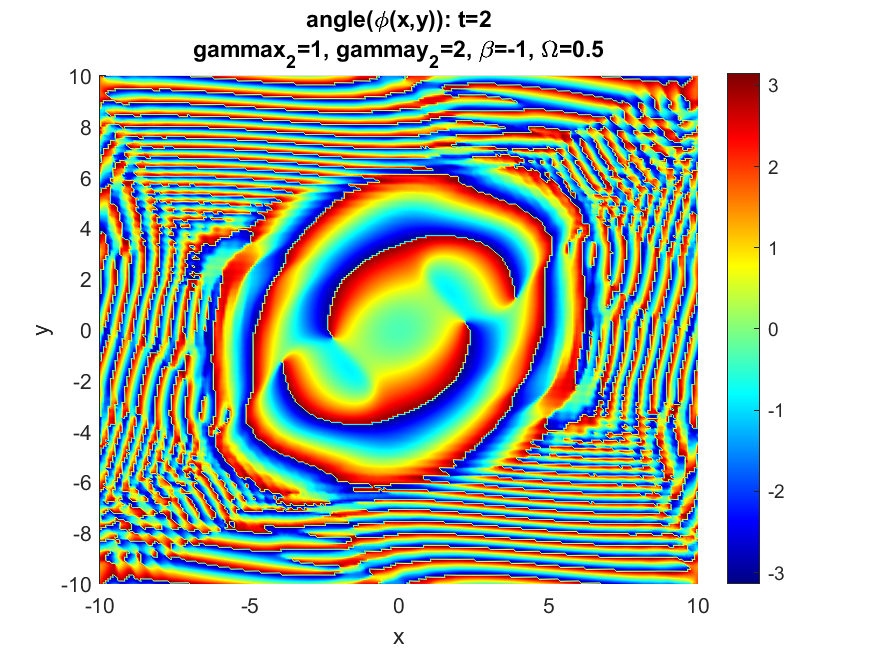}
%angle_t0_Om0.5_gx1gy2_bet(n1)0.98Q}
\caption{\label{psi(t):belowQ0} Simulation of (\ref{psi:OmGa})  for $\psi_0=0.98Q_0$  at $t= 2$ with 
 $\Om=0.5$, $\ga=(1,\sqrt{2})$.  
  Left: plot of the square modular  $|\psi(t)|^2$.\    %envelope 
Right: plot of the phase for $\psi(t)$}  %\end{center}
\label{f:belowQ:Om0.5-ga2(1,2)}
\end{figure}     %$c>\norm{Q_0}_2
%\end{document} 

\begin{figure}[H]
%\begin{center}
\includegraphics[width=0.48\textwidth]{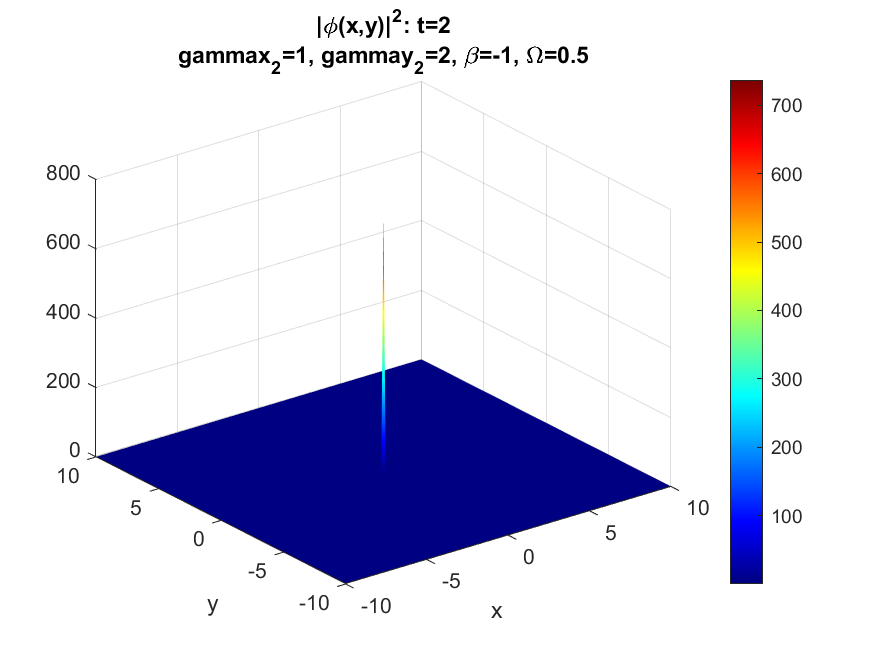}
%abs_t0Om0.5gx1gy2_be(n1)_1.02Q.png}
\includegraphics[width=0.48\textwidth]{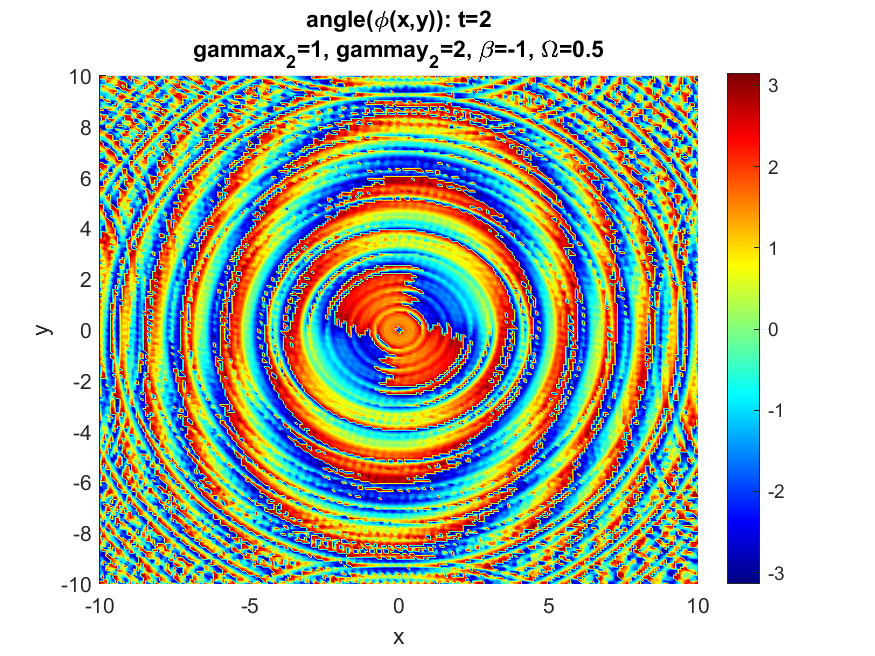}
\caption{\label{f:aboveQ0:Om0.5-ga2(1,2)}  
Blowup  for $\psi_0=1.02Q_0$  at $t= 2$ 
for  $\Om=0.5$, $\ga=(1,\sqrt{2})$.     
Left: plot of the square modular $|\psi(t)|^2$. 
Right: plot of the phase for $\psi(t)$}  %\end{center}
\end{figure}

%(a) $\Omega=0.8$ (or $0.5)$, $\gamma=(1,2)$,Â  $\beta=-1$.Â  Initial data is $0.98*Q_0$ and $1.05Q_0$,Â here $Q_0$  is the gss obtained from shooting method.Â Â Include the blowup file when $t=t_*$ at the blouwp time.Â Â    

% \end{document}
 
\bigskip 
 
 \begin{figure}[H]
\includegraphics[width=0.48\textwidth]{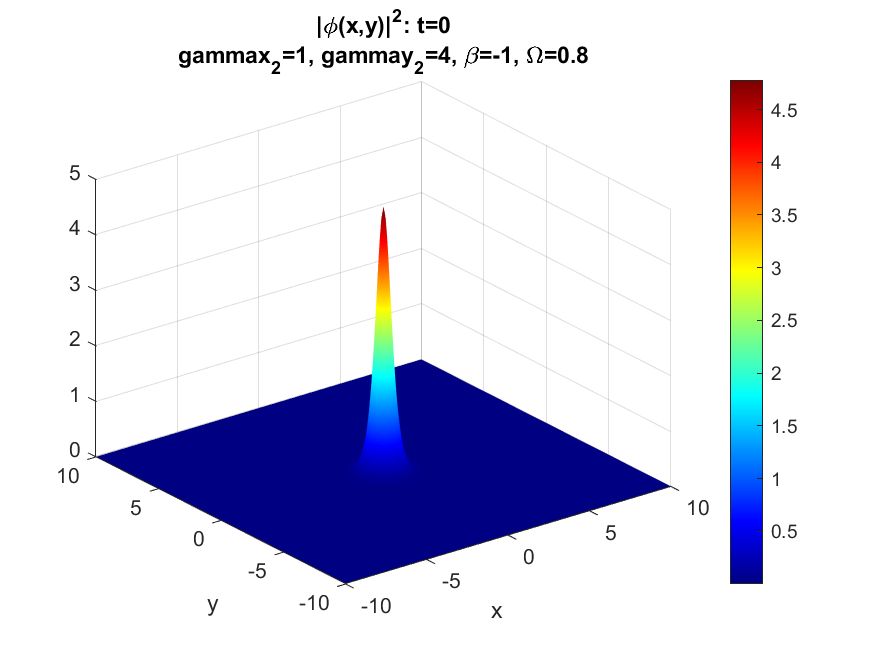}
%imageQ/dynamicsQ/Om(pt5)ga(12)Q(pt98).png}
\includegraphics[width=0.48\textwidth]{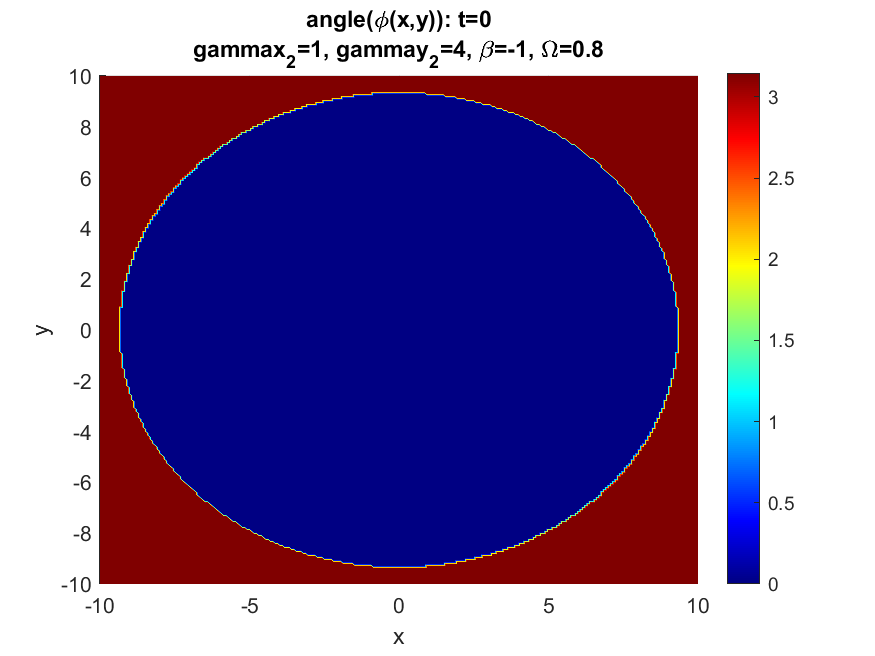}
%angle_gammax2=1_gammay2=4_beta=-1_Omega=0.8}
\caption{Square modular and phase of 
scaled g.s.s. $\psi_0=0.99Q_0$} % t=0.0 V_\ga      \ga=(1,4) Om=0.8     L: solution for $\psi_0$ below Q
\label{fig:0.99Q}
\end{figure}

\bigskip

\begin{figure}[H] %Om0.8-ga2(1,4)} 
\includegraphics[width=0.48\textwidth]{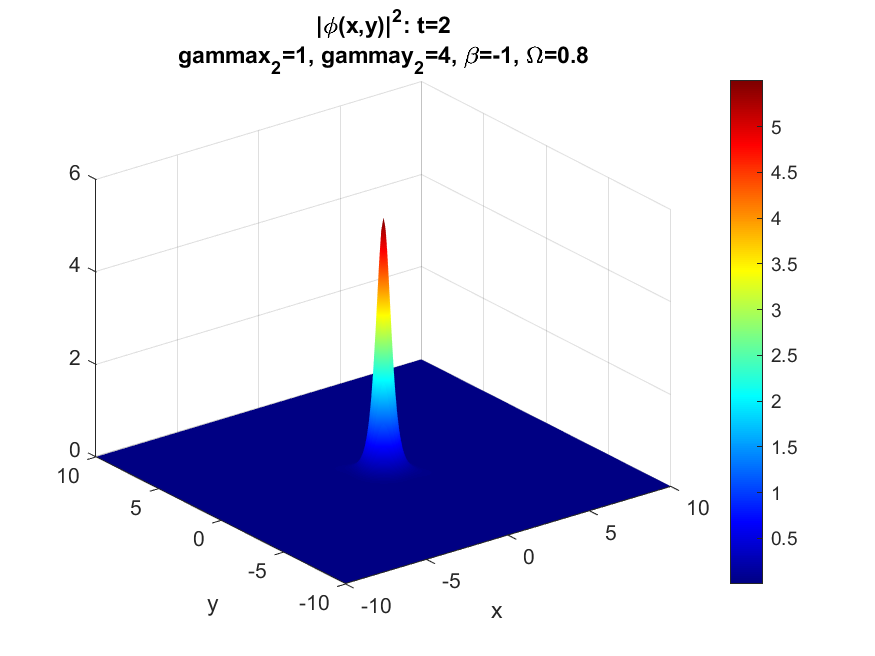}
\includegraphics[width=0.48\textwidth]{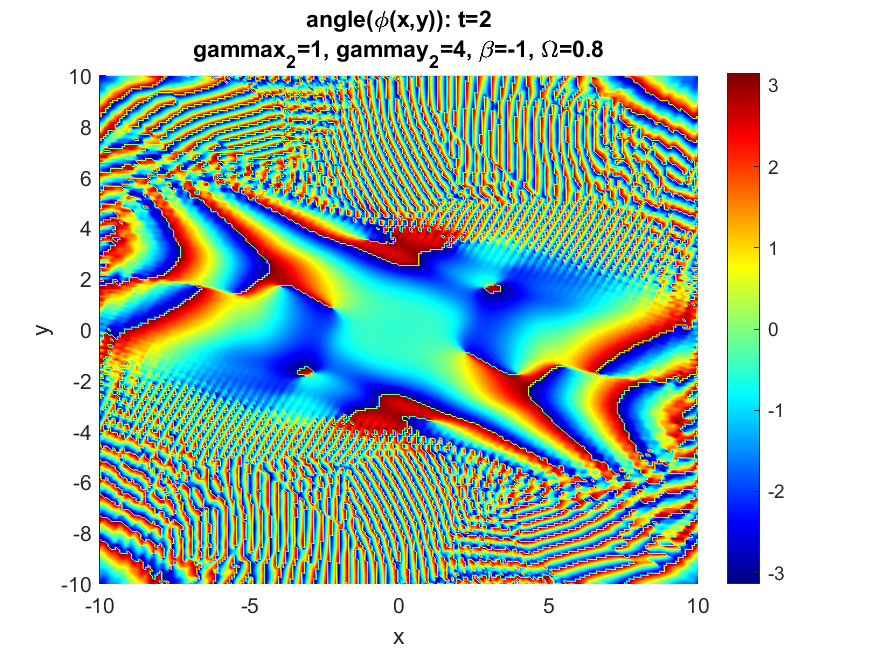} 
\caption{\label{f:0.99Q-global} 
Simulation of $\psi(t)$ at $t=2$ with   
   $\Om= 0.8$, $\ga=(1,2)$ and $\psi_0=0.99Q_0$} 
\end{figure}

\medskip
\begin{figure}[H]
\includegraphics[width=0.48\textwidth]{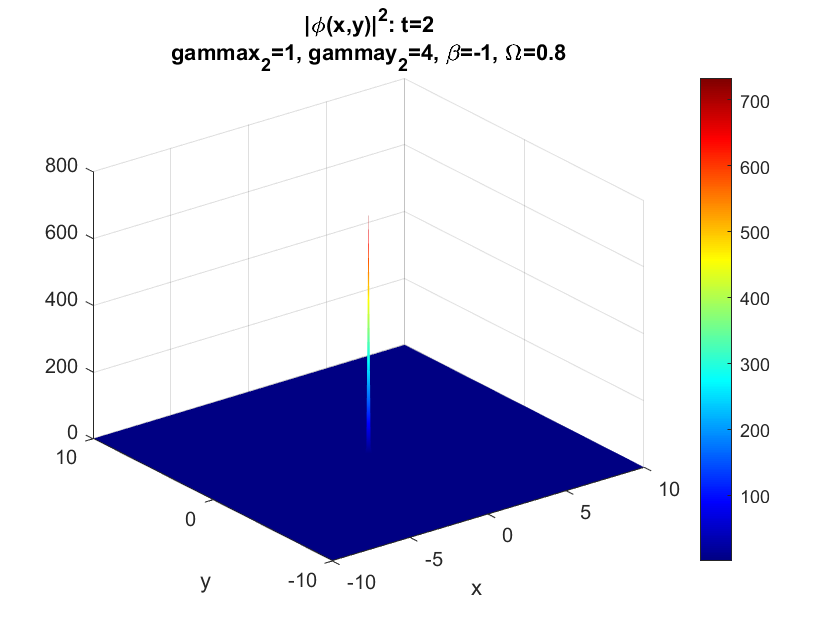}
%abs_t0Om0.5gx1gy2_be(n1)_1.02Q.png}
\includegraphics[width=0.48\textwidth]{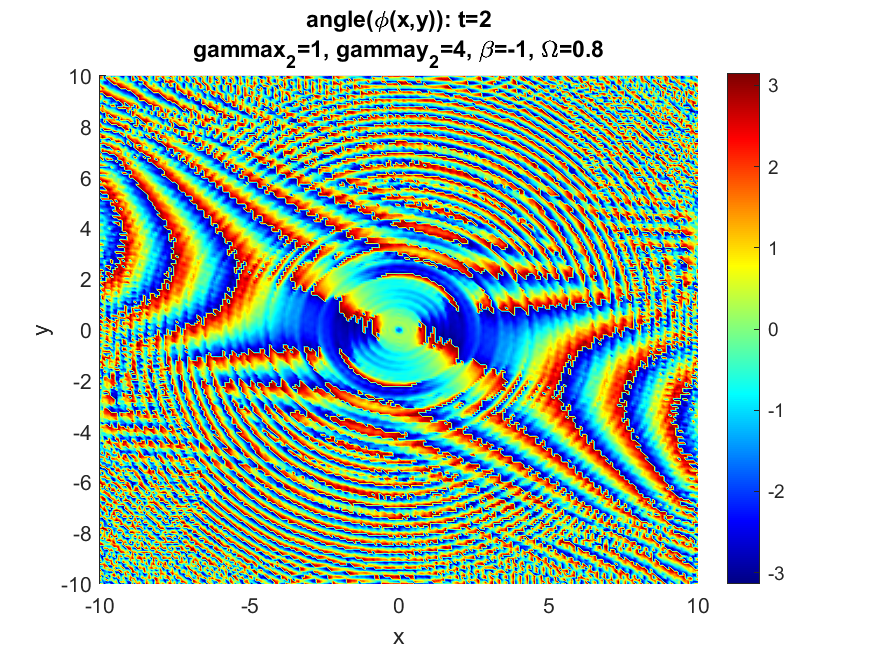}
\caption{\label{1.03Q:Om0.8-ga2(1,4)}  
Blowup  for $\psi_0=1.03Q_0$  at $t= 2$ 
for  $\Om=0.8$, $\ga=(1,{2})$.     
Left: plot of the square modular $|\psi(t)|^2$. 
Right: plot of the phase for $\psi(t)$} 
\end{figure} 

%\end{document}

%\vs{.10in}
\bigskip 
%\clearpage 

\subsection{Scaled ground state $Q_{\Om,\ga}\, $} \label{ss:Q_OmV:simulat} %p=3, d=2  
 Repeat the routine above with $\psi_0= c Q_{\Omega,\gamma}$ in place of $cQ_0$
where $Q_{\Omega,\gamma}$   is the g.s.s. of RNLS (\ref{eQ:OmGa}). 
 In this case, since  $1=\int |Q_{\Omega,\gamma}|^2d x$ as prescribed in the coding, 
 the scaled mass $M(\psi_0)=c^2$.
%\end{document}
%\footnote{Initially we  wanted to try  \psi_0 =2.419*Q_{\Omega, \gamma}$ or $2.42*Q_{\Om, \ga} } 
Then, the minimal  mass blowup condition in Proposition \ref{t:gwp-blup-Om} requires 
 $ \int |\psi_0|^2\, dx >5.85043$. 
There is disparity in the use of scaled $Q=Q_0$ and $Q=Q_{\Om,\ga}$  to
 simulate the solutions for the RNLS in $\R^{1+2}$
\begin{equation}\label{psi:OmGa}  
 i\psi_t=-\frac{1}{2} \nabla^2 \psi+ V_\ga(x)\psi +\mu |\psi|^{p-1}\psi - i \Om x^\perp\cdot\nabla \psi \,,
	\end{equation} 
which is a two-dimensional  case of (\ref{RNLS_Omga}).  	
% with $p=3$, $d=2$, \mu=-1   the initial data below and above $Q_0 

The computation and simulation for g.s.s. %when $\Om= 0.8$, $\ga=(1,\sqrt{2})$, $(1,2)$ 
are run per the set of parameters $[\Om,\ga,c, Q]$ respectively given by: 
\begin{enumerate}
\item[(i)]  $ [0.5, (2,{8}), 1, Q_{\Om,\ga}]$, 
and  $[0.5, (2,{8}), 2.515, Q_{\Om,\ga}]$. 
\item[(ii)] $ [0.8, (1,\sqrt{2}), 2.415/2.43, Q_{\Om,\ga}]$ and $ [0.8, (1,{2}), 2.480/2.495, Q_{\Om,\ga}]$. 
\end{enumerate} 
The critical threshold value $c_0=\sqrt{ 5.85043}= 2.418766$. 
However, in the domain of $\psi_0=cQ_{\Om,\ga}$,
 the initial data may have to assume a mass value slightly larger than $c_0^2$
to give rise to a blowup solution of (\ref{psi:OmGa}). 
%at mass level %larger than $c=2.42$
In what follows,  Figure \ref{fig:1QAV} is the simulation at $t=2$ of unscaled ground state where $\Om=0.5$, $\ga=(2,8)$.
It shows the standing wave solution for the RNLS (\ref{psi:OmGa}).  
\mbox{Figure \ref{f:2.515QAV}} to Figure \ref{t2:2.495QAV} are plots of the simulations of $\psi_0=cQ_{\Om,\ga}$ when $\Om=0.5$ or $0.8$.

\begin{figure}[H] %./imageQng/Om(0.8)Ga2(1,4)Q0}
\includegraphics[width=0.48\textwidth]{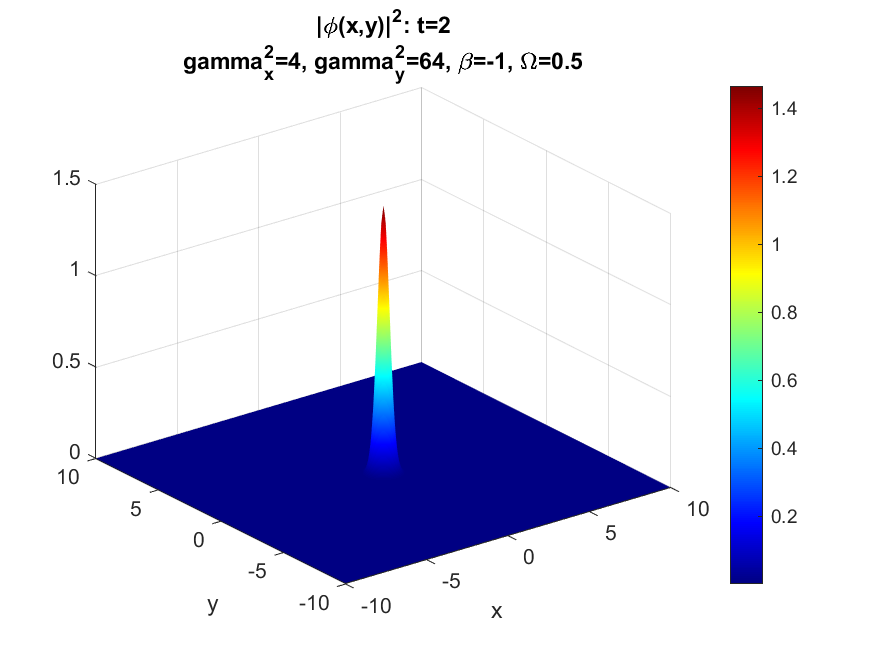}
\includegraphics[width=0.48\textwidth]{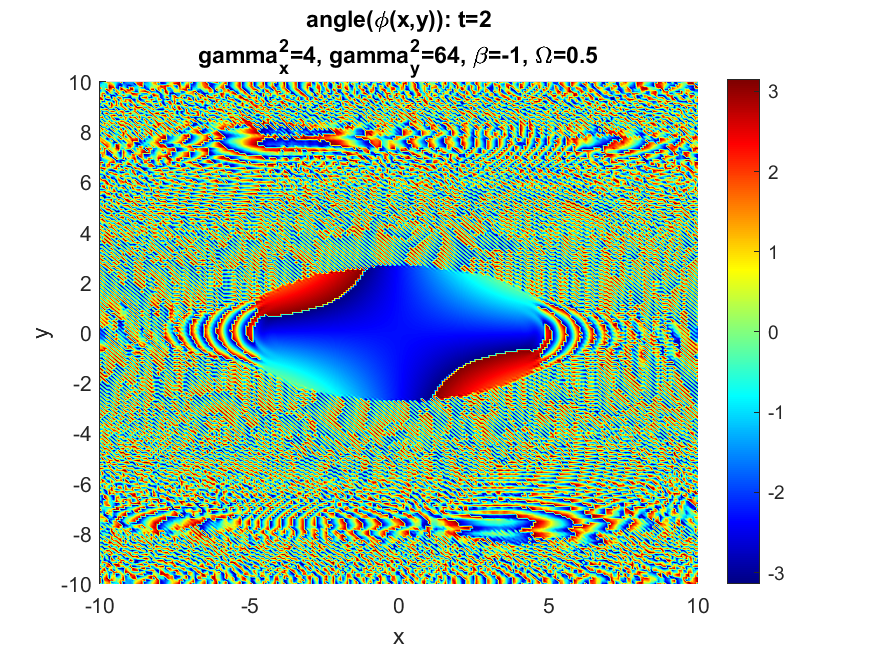}
\caption{Square modular and phase of 
unscaled g.s.s. $\psi_0=Q_{\Om,\ga}$  %at $t= 2  
with $\Omega=0.5$, $\ga_x=2, \ga_y=8$, $c=1$ at $t=2$
}\label{fig:1QAV}
% anisotropic  $V_\ga$, $\ga=(2,8)$, Om=0.5 
\end{figure} 

\begin{figure}[H] %./imageQng/Om(0.8)Ga2(1,4)Q0}
\includegraphics[width=0.48\textwidth]{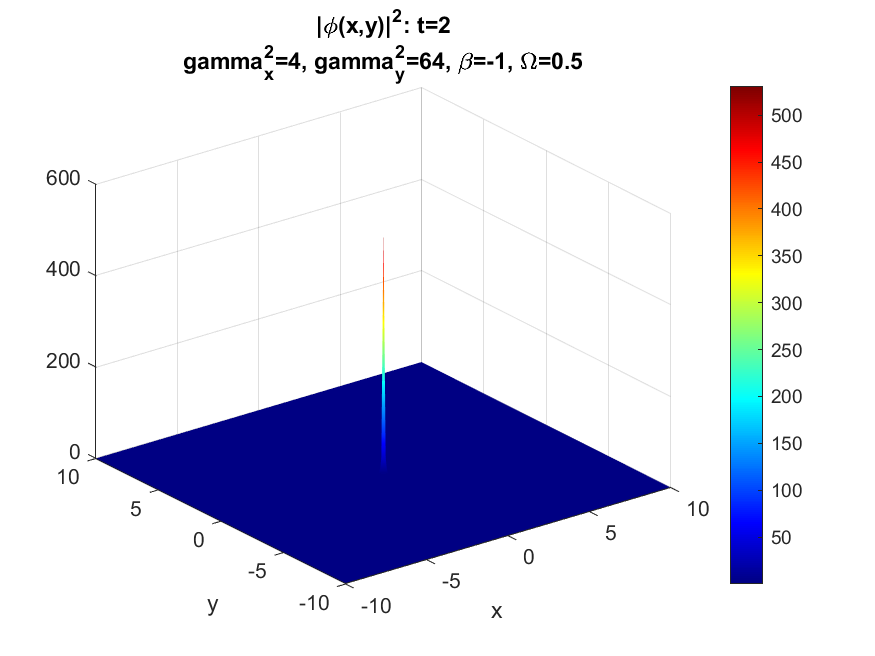}
\includegraphics[width=0.48\textwidth]{%(t=0,1,2)Om(pt8)V(1,sqrt2)Q_AV/
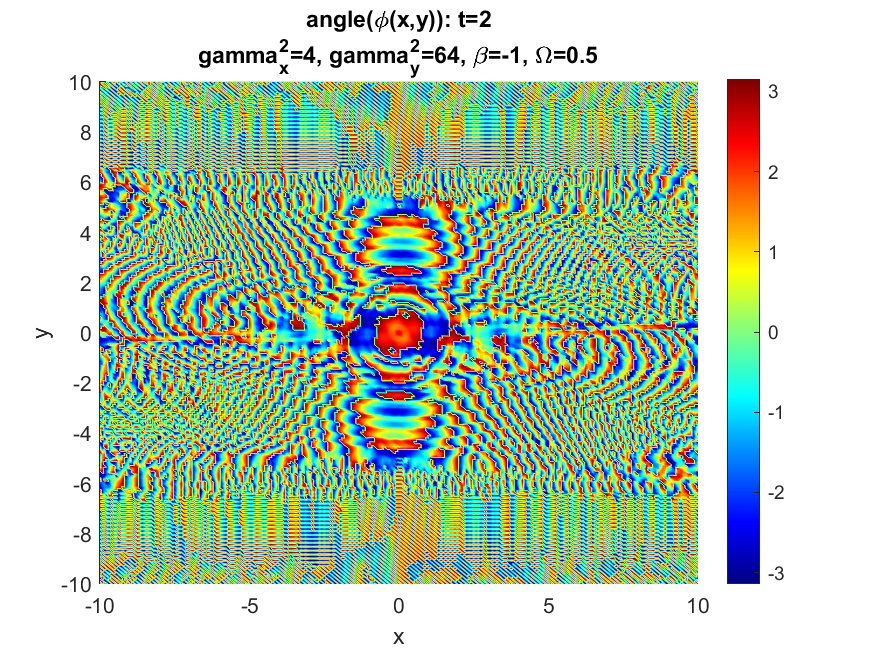}
\caption{Simulation of scaled ground state $\psi_0=2.515Q_{\Om,\ga}$  %at $t= 2.0  
with $\Omega=0.5$, $\ga_x=2, \ga_y=8$ 
}\label{f:2.515QAV}
% anisotropic quadratic potential $V_\ga$, $\ga=(2,8)$, Om=0.5 
\end{figure} 

\begin{figure}[H] %Om0.8-ga2(1,4)}
\includegraphics[width=0.48\textwidth]{%../(t=0,1,2)Om(pt8)V(1,sqrt2)Q_AV/
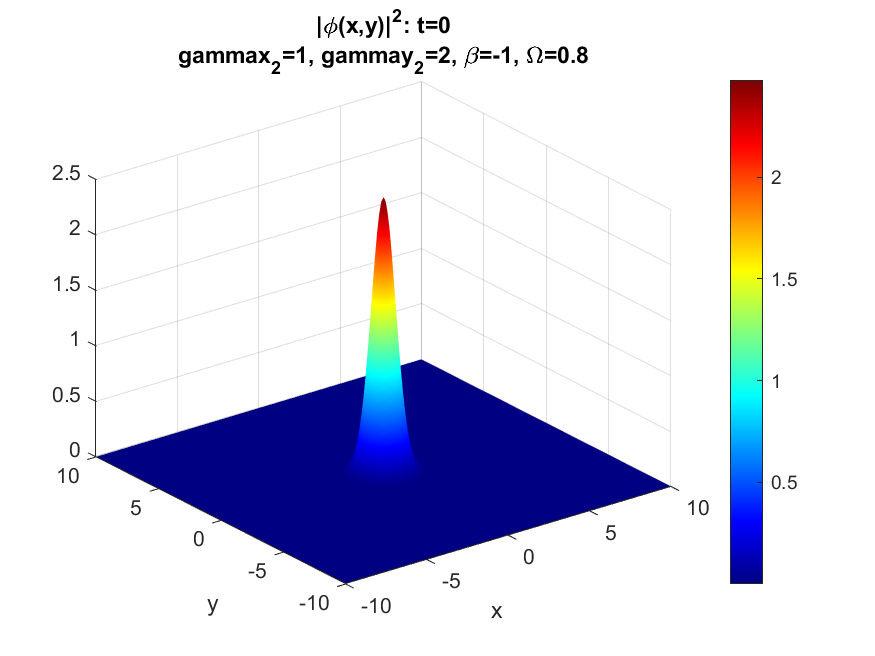}
%abs_gammax2=1_gammay2=4_beta=-1_Omega=0.8}
%imageQ/dynamicsQ/Om(pt5)ga(12)Q(pt98).png}
\includegraphics[width=0.48\textwidth]{%(t=0,1,2)Om(pt8)V(1,sqrt2)Q_AV/
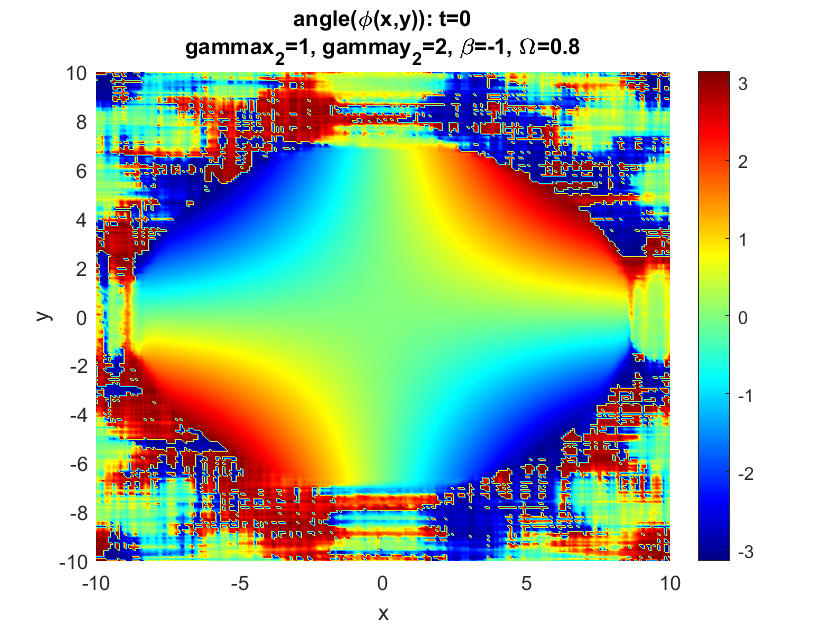}
%angle_gammax2=1_gammay2=4_beta=-1_Omega=0.8}
\caption{Square modular and phase of 
scaled g.s.s. $\psi_0=2.415Q_{\Om,\ga}$  %at $t= 0.0  
with $\Omega=0.8$, $\ga_x^2=1, \ga_y^2=2$ 
}\label{fig:2.415Q_AV}
% anisotropic quadratic potential $V_\ga$, $\ga=(1,4)$, Om=0.8 
\end{figure}

\begin{figure}[H] %Om0.8-ga2(1,2)}
\includegraphics[width=0.48\textwidth]{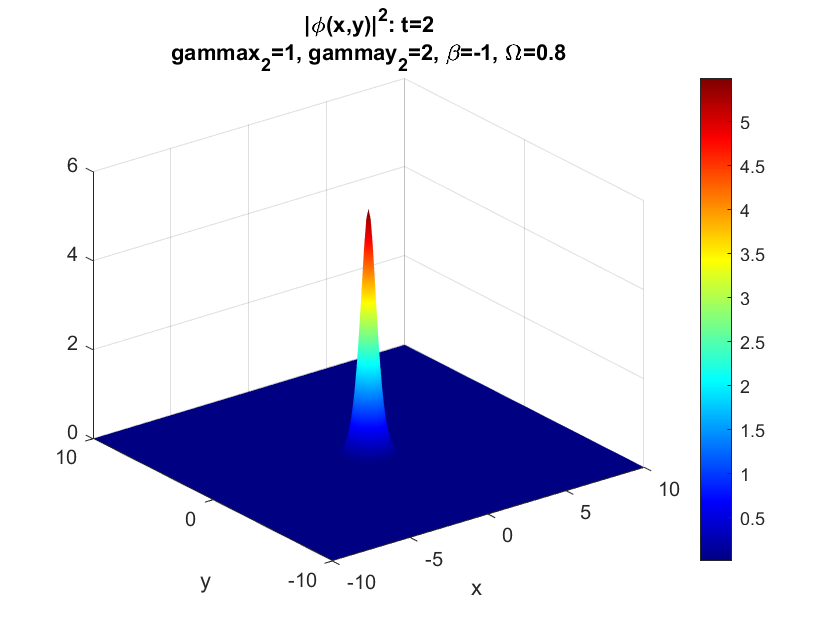}
%imageQ/dynamicsQ/Om(pt5)ga(12)Q(pt98).png}
\includegraphics[width=0.48\textwidth]{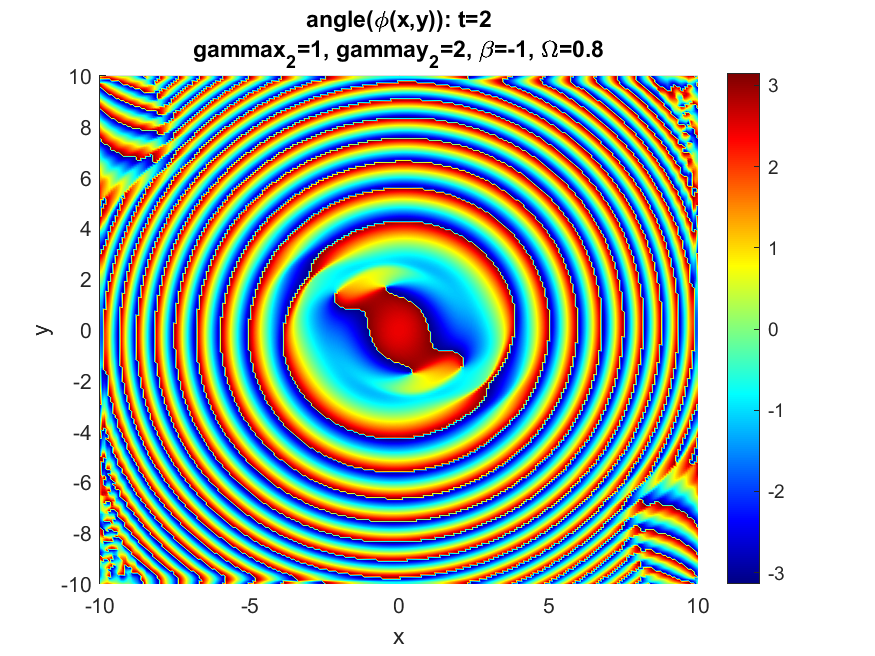}
%angle_gammax2=1_gammay2=4_beta=-1_Omega=0.8} 
\caption{Simulation for scaled g.s.s. $\psi_0=2.415Q_{\Om,\ga}$  at $t= 2$  
with $\Omega=0.8$, $\ga_x^2=1, \ga_y^2=2$ 
} % anisotropic quadratic potential $V_\ga$, $\ga=(1,\sqrt{2})$, Om=0.8 
\label{f:sim:2.415Q_AV}
\end{figure} 
%\end{document}

\begin{figure}[H] %Om0.8-ga2(1,2)}
\includegraphics[width=0.48\textwidth]{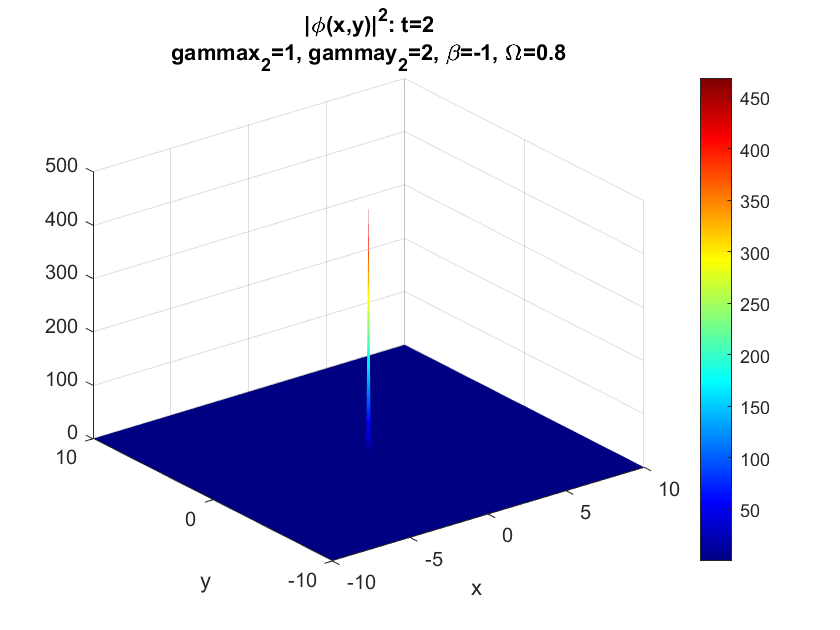}
%imageQ/dynamicsQ/Om(pt5)ga(12)Q(pt98)
\includegraphics[width=0.48\textwidth]{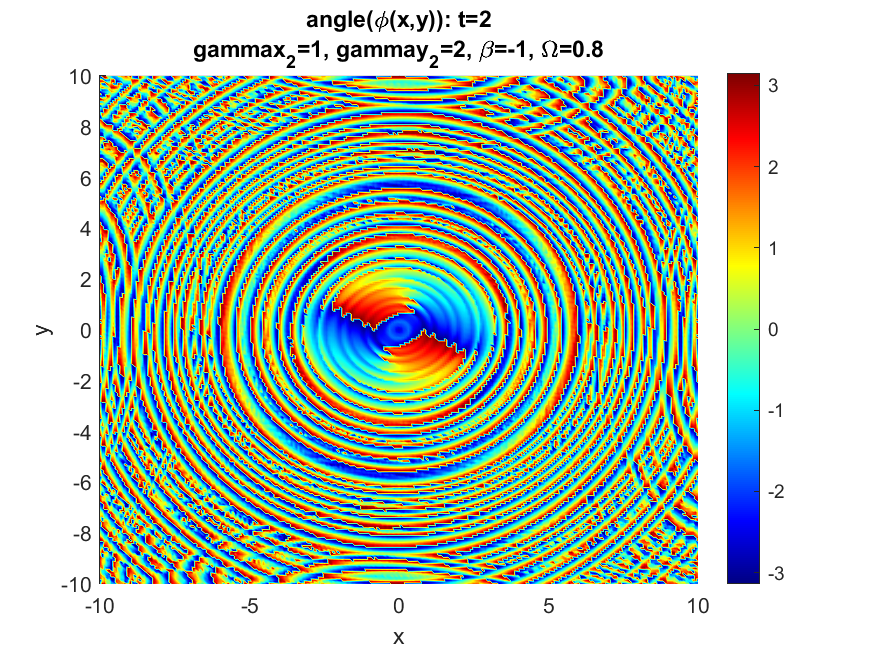}
%angle_gammax2=1_gammay2=4_beta=-1_Omega=0.8} 
\caption{Blowup for scaled g.s.s. $\psi_0=2.43 Q_{\Om,\ga}$  at $t= 2$  
with $\Omega=0.8$, $\ga_x^2=1, \ga_y^2=2$ 
} % anisotropic quadratic potential $V_\ga$, $\ga=(1,\sqrt{2})$, Om=0.8 
\label{sim:2.43QAV}
\end{figure} 

%\end{document} \clearpage 

From Figures \ref{f:sim:2.415Q_AV}-\ref{sim:2.43QAV}, we see that when the anisotropy $(\ga_1,\ga_2)$ is small, then the threshold mass is  close to $5.83043$ in view of the result on 
$(2.415)^2=5.832225$ versus $(2.430)^2=5.9049$.  
However, when the anisotropy $(\ga_1,\ga_2)$ is larger, then the threshold mass seems to turn somewhat different!  
% to $5.83043$ 
This can be observed by the following numerical result illustrated in Figures \ref{f:2.48QAV}-\ref{t2:2.495QAV}  
when the neighboring constant  
$(2.480)^2=6.1504$ and $(2.495)^2=6.225025$.  
%\end{document} 

\begin{figure}[H] %\label{f:2.48QAV}%Om0.8-ga2(1,4)} 
\includegraphics[width=0.48\textwidth]{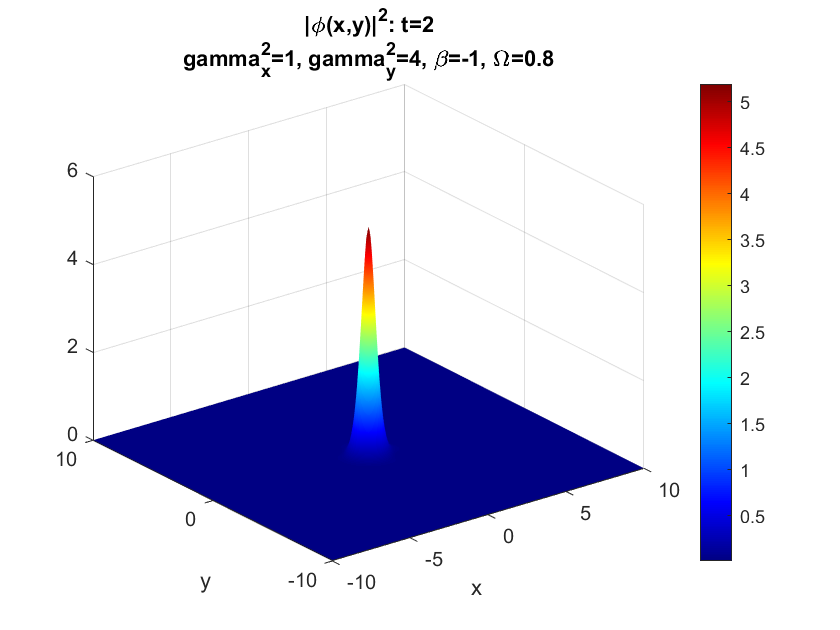}
\includegraphics[width=0.48\textwidth]{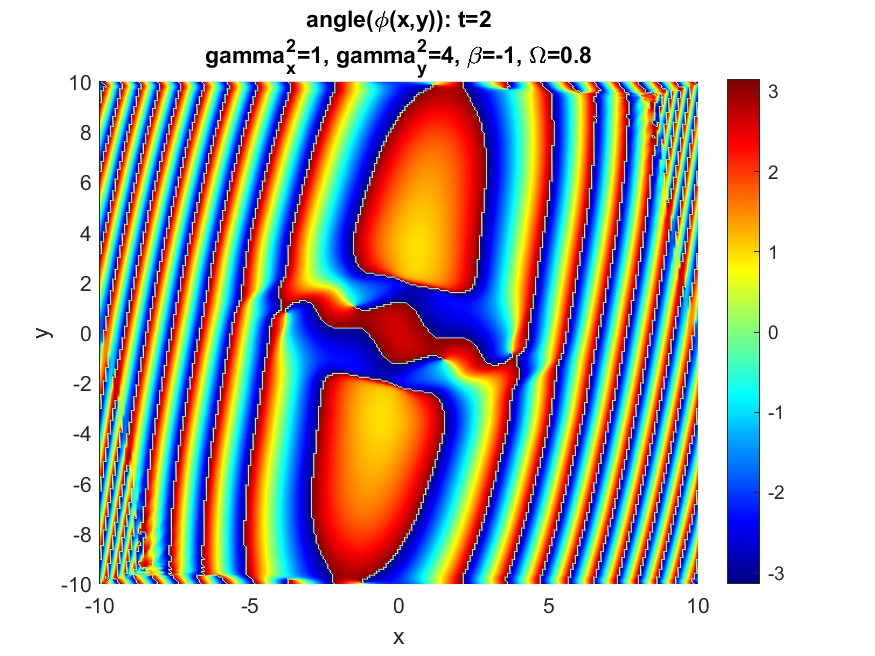}
\caption{Global existence for $\psi_0=c Q_{\Om,\ga}$  at $t= 2$  
with $\Omega=0.8$, $\ga_x^2=1, \ga_y^2=4$, $c=2.48$ 
}
\label{f:2.48QAV}
\end{figure}

\begin{figure}[H] % best [!htp]  Om0.8-ga2(1,4)} 
\includegraphics[width=0.48\textwidth]{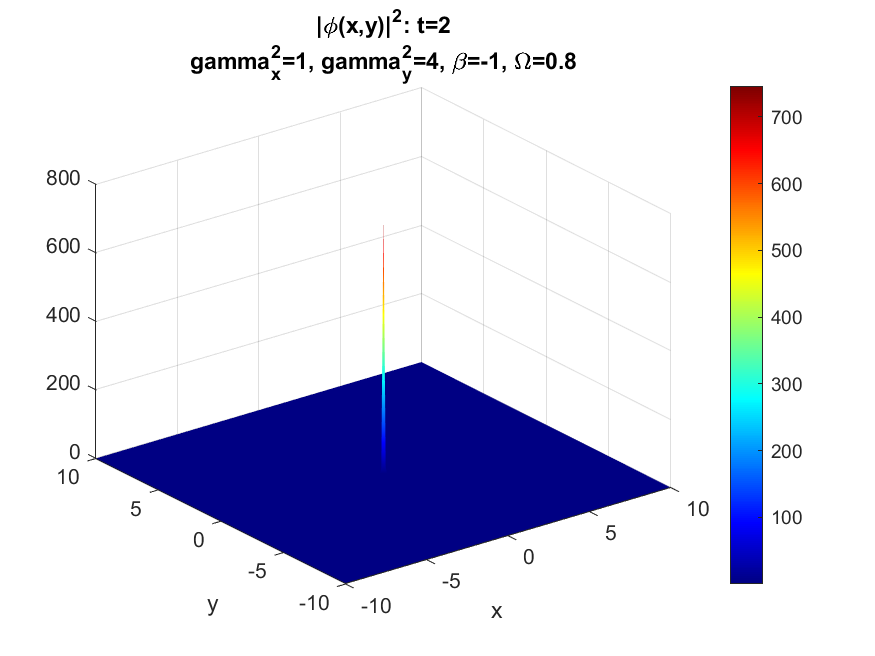}
\includegraphics[width=0.48\textwidth]{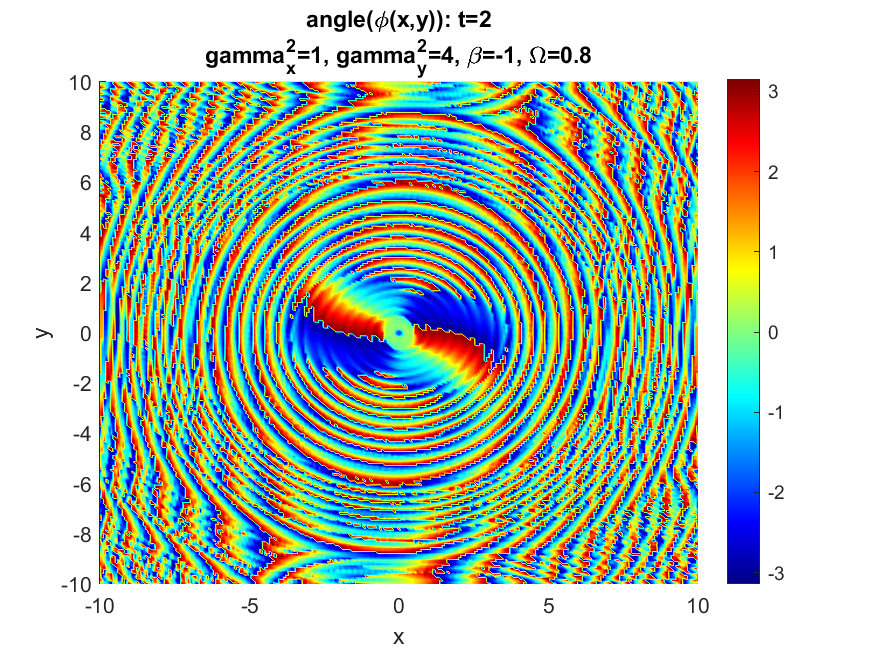}
\caption{Blowup for $\psi_0=c Q_{\Om,\ga}$  at $t= 2$  
with $\Omega=0.8$, $\ga_x^2=1, \ga_y^2=4$, $c=2.495$ 
} % anisotropic quadratic potential $V_\ga$, $\ga=(1, 2), Om=0.8 
\label{t2:2.495QAV}
\end{figure} 

%\end{document}

\subsection{Conclusion on the numerics}\label{ss:conclusion}  % two questions

%i) For  [$abs\_phi1\_t=0$] figure(s),  the plot is about $|u(x_1,x_2)|^2$. %\crr %confirmed Oct.29

%ii) Your code shows that is taking data $1.02*Q_0$, then G.S.S. exists for RNLS. How far we can go?  Namely, can we numerically estimate the sharp upper bound $\alpha_0 *Q_0$ such that, say, $\alpha_0$ can reach 1.2 or higher? 

The results in Subsection \ref{ss:Q_OmV:simulat} %\ref{ss:Q0-simulat} 
on the sharp value $c=c_{\Om,\ga}$ 
 provide a lower bound estimation about the mass of the g.s.s. $Q_{\Omega,\gamma}$.  
%The ground states computed by GPE labs for $\gamma_x = 1, \gamma_y = 2$, and 
% $|\Omega| = \{0, 0.5, 0.8\}$. 
% These are the ground states as defined in the GPELabs papers which minimize the energy 
% when $\Vert \phi\Vert=1  
The simulations for the existence problem of ground states of (\ref{eQ:OmGa}) show that, in consistence with 
Theorem \ref{T-Existence} and Theorem \ref{t3:non-E},   
\begin{enumerate} 
\item[(i)] $0<\gamma_1<\gamma_2$ and $0<\Omega < \gamma_1\To$ existence of g.s.s.  
\item[(ii)]  $\gamma_1<\Omega<\gamma_2\To$ non-existence of g.s.s.  
\end{enumerate} 
% Below are videos of global existenceÂ  for $|\psi | = 2.1 |\psi_gound |$, and blow up forÂ 
%$|\psi| = 2.4 |\psi_gound |$, but this  $\psi_gound$  is the ground state to a Schrodinger equation with trappingÂ potential.Â  

The simulations for the threshold dynamics   problem for  (\ref{psi:OmGa}) support the following statement that extends 
 \cite[Theorem 1.1]{BHHZ23t} to the case of anisotropic potential under the condition $|\Omega| < \uga$: %Th \ref{t3:non   
\begin{enumerate} 
\item[(i)] If $\norm{\psi_0}_2<\norm{Q_0}_2$,  then the solution flow $\psi_0\mapsto \psi(t)$ exists globally in time.   
\item[(ii)] Given any $c_0>\norm{Q_0}_2$,  there exists $\psi_0\in \Sigma$ with $\norm{\psi_0}_2=c_0$
such that the corresponding solution flow $\psi(t)$ of (\ref{psi:OmGa}) 
blows up in finite time.  
% then there $\gamma_1<\Omega<\gamma_2$ non-existence of g.s.s.  
\end{enumerate} 

% SZ comment 

Note that Figures \ref{fig:2.415Q_AV} to \ref{t2:2.495QAV} 
indicate that when $\Om=0.8$, the threshold value $c_{thresh}=c_{\Om,\ga}$ may vary 
with the anisotropy parameter $\ga=(1,\sqrt{2})$ and $\ga=(1,2)$, namely, 
$c_{thresh}=2.43$ and $2.495$.  

If examing the case $\Om=0.5$,  then $c_{thresh}=2.45$ if $\ga=(1,2)$
while $c_{thresh}=2.515$ if $\ga=(2,8)$ 
in view of Figures \ref{Q:t2:pt5Ga12}-\ref{QAV:t2:2.45blup}
and Figures \ref{fig:1QAV}-\ref{f:2.515QAV}, respectively.  

%\footnote{From $\Om=0.5, \ga=(2,16)$, $\beta=-1$, check if starting with ground state $\psi_0=c_0*Q_{\Om,\ga}$,then blowup occurs if $\psi_0=c_1*Q_{\Om,\ga}$ and global time existence for $\psi(t)$ if $\psi_0=c_0*Q_{\Om,\ga}$. % not have to include a figure  Describe the value of $c_0, c_1$ here. $\to ...$    % Dec.18   
 
 %./Om(pt5)Ga(12)
 \begin{figure}[H] %   
\includegraphics[width=0.48\textwidth]{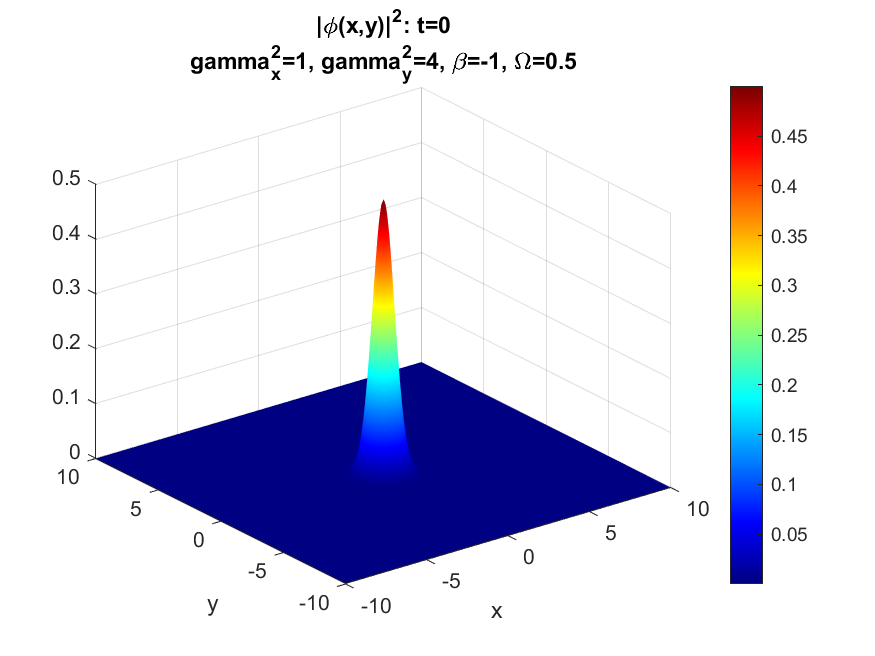}
\includegraphics[width=0.48\textwidth]{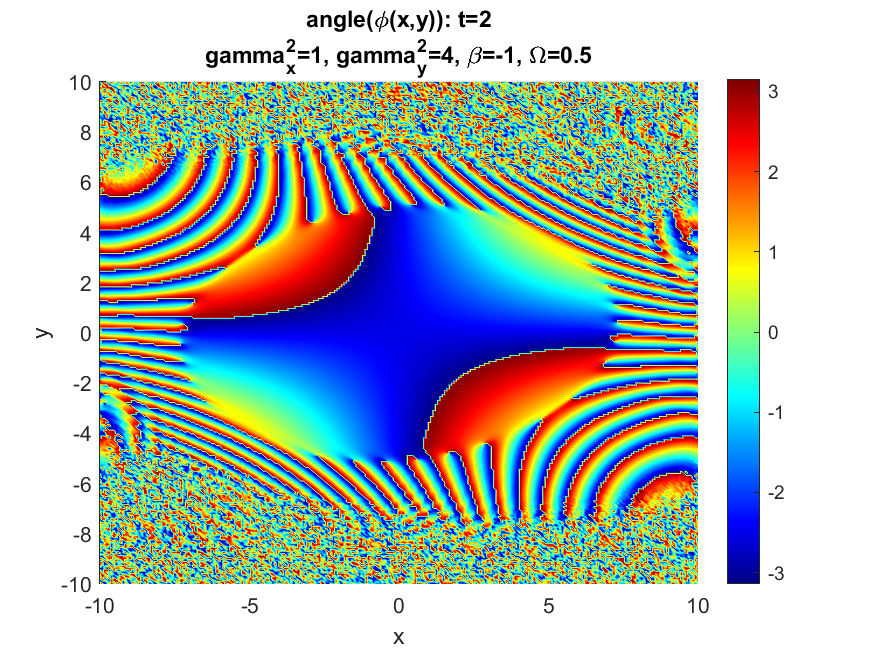}
\caption{square modular and phase  for unscaled g.s.s. $\psi_0= Q_{\Om,\ga}$  at $t= 2$  
with $\Omega=0.5$, $\ga_x=1, \ga_y=2$ %$c=1$ 
} % anisotropic  V_\ga, \ga=(1, 2), Om=0.5 
\label{Q:t2:pt5Ga12}
\end{figure} 
 
 \begin{figure}[H] % 
\includegraphics[width=0.48\textwidth]{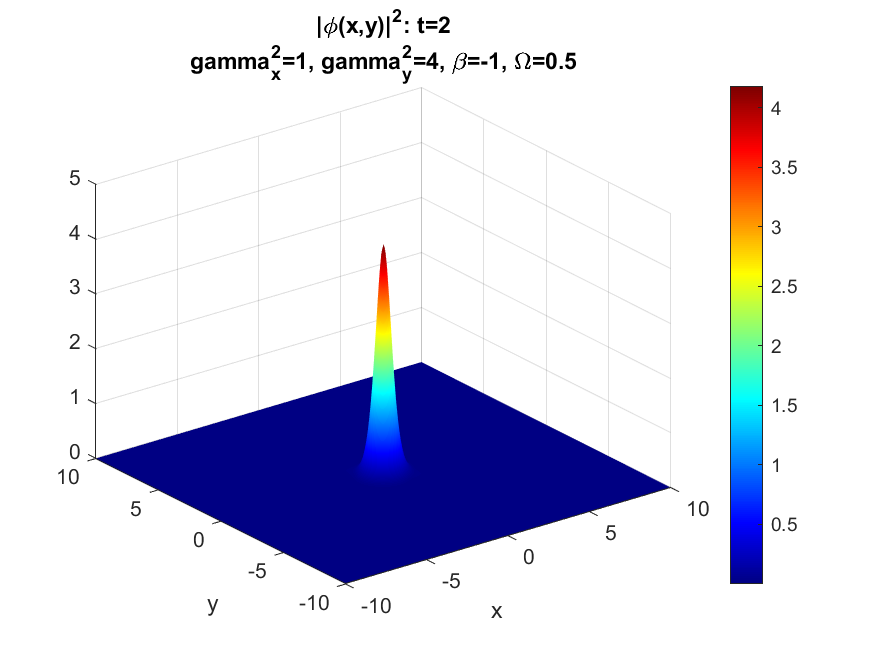}
\includegraphics[width=0.48\textwidth]{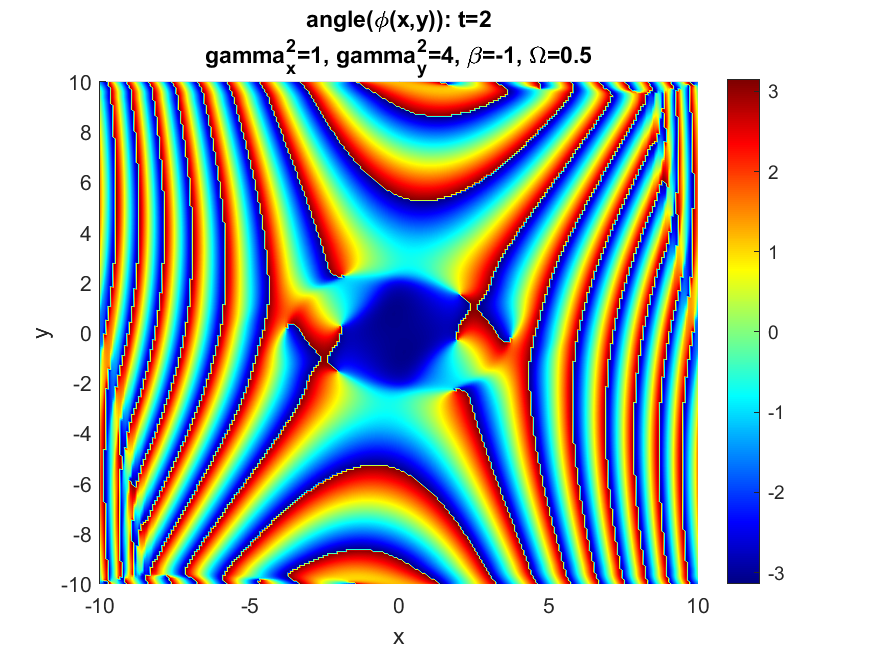}
\caption{Global existence for $\psi_0=c Q_{\Om,\ga}$  at $t= 2$  
with $\Omega=0.5$, $\ga_x=1, \ga_y=2$, $c=2.44$ 
} % \ga=(1, 2), Om=0.5 
\label{QAV:t2:2pt44}
\end{figure}

\begin{figure}[H] % 
\includegraphics[width=0.48\textwidth]{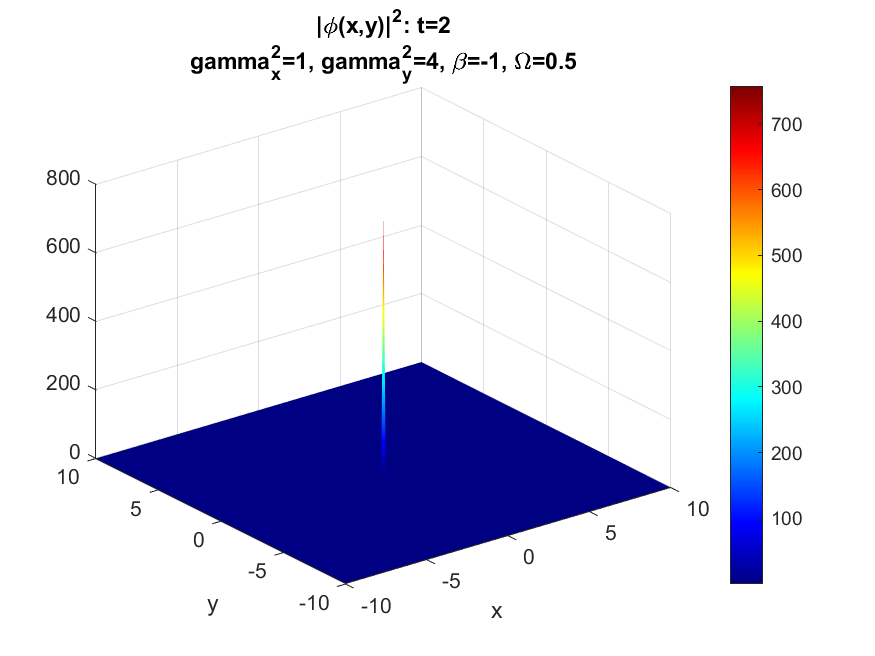}
\includegraphics[width=0.48\textwidth]{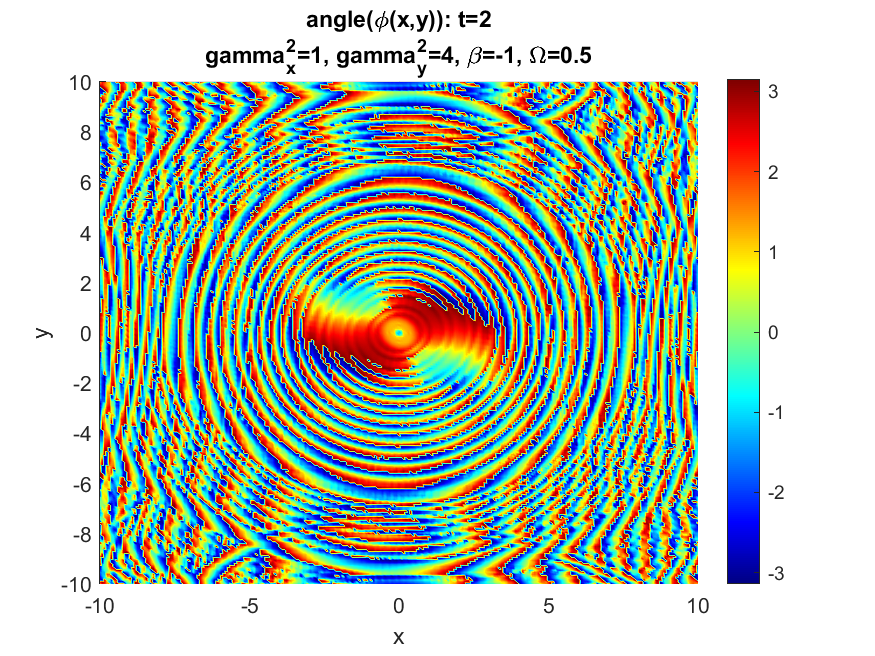}
\caption{Blowup for $\psi_0=c Q_{\Om,\ga}$  at $t= 2$  
with $\Omega=0.5$, $\ga_x=1, \ga_y=2$, $c=2.45$ 
} % ga=(1, 2), Om=0.5
\label{QAV:t2:2.45blup}
\end{figure} 

%\footnote{From $\Om=0.5, \ga=(2,16)$, $\beta=-1$, check if starting with ground state $\psi_0=c_0*Q_{\Om,\ga}$,then blowup occurs if $\psi_0=c_1*Q_{\Om,\ga}$ and global time existence for $\psi(t)$ if $\psi_0=c_0*Q_{\Om,\ga}$. % not have to include a figure Describe the value of $c_0, c_1$ here. $\to ...$    % Dec.18   }
 
 Concerning the construction and simulation for the ground states solution of (\ref{psi:OmGa}),  
 it would be of interest to further investigate the endpoint case $|\Om|=\uga$ which 
seems related to the partial confinement condition on $(A,V)$ for the RNLS. 
Such a problem might be a delicate and subtle issue in both theoretic and computational regimes.

\bigskip  \iffalse  {\bf get\_u.m}: 
 This function loads in ground\_state\_v.mat and grid\_r.mat,   calculates  
$u(r)=2^{\cor{-} 1/(1-p)}v(r)$ then return 
$U(x,y)$ by interpolating the values of  $u(r)$ 
(Note that for it to run $sqrt(x^2+y^2) \leq r_{max}$ for all $(x,y)$ and 
for my simulation I used $r_{max} = 15$). 

MyDynamics2.m: Simulates the dynamicsÂ of the NLS. In this file,
I call get\_u.m  
andÂ use $U$ as my initial condition. However, it is easy to change to another initial condition and 
still scale it wrt $Q$ (see $\phi_1$ commented in the code). 
\fi    %These files should be put in theÂ GPELab\Code 2D\ExamplesÂ path. To save the images and videos there should be the foldersÂ videos,Â Plots/photos, andÂ Plots/figuresÂ also in theÂ ExamplesÂ folder or youÂ will need to edit or comment out the saving functions in theÂ MyDynamics2.m file.''

%\end{document}

\mbox{} 
\bigskip 

%\appendix

\section{Appendix. Magnetic Sobolev space $\Sigma_{A,V}$} \label{magV:sobolev} 
% V satisfy  condition $(V_0)$, i.e., $V(x)\to \iy$ as $|x|\to\iy$.
Let $(A,V)$ be an electromagnetic potential such that 
  $ |A(x)|\le \al_0 (|V(x)|+1)^{1/2}$ for some $\al_0>0$.  % all x>R_0
An alternative characterization of the energy space $\Sigma=\sH^1$ for  $\cL=H_{A,V}$ is given by 
\begin{align*}
\Sigma_{A,V}\coloneqq\{u\in H^1(\R^d):  \nabla_A u\in L^2,\  |V|^{1/2} u\in L^2 \}
\end{align*}
equipped with the norm 
\begin{equation*}%\label{AVe:SigmaNorm}
        \norm{u}_{\Sigma_{A,V}} \coloneqq \int \left( |\nabla_A u|^2 +  |V|\, |u|^2+|u|^2  \right)^{1/2} \,,
\end{equation*}
which is exactly (\ref{eAV:norm}), consult \cite[Proposition 4.4]{Z12a} for the case where $V(x)\approx \la x\ra^2$.  
The norm equivalence $\norm{u}_{\Sigma_{A,V}}\approx \norm{u}_{\sH^1}$ follows easily from the relation
\begin{align*}
& \int |(\nabla -iA)u |^2=\int |\nabla u |^2+\int |A|^2 |u|^2
- i \int Au\nabla \bar{u}+ i \int A\bar{u} \nabla u\\
=& \int |\nabla u |^2+\int |A|^2 |u|^2-2\Im \int A\bar{u} \nabla u\, ,
\end{align*} 
from which we obtain $ \norm{u}_{\Sigma_{A,V}}\ge \eps_0  \norm{u}_{\sH^1}$ for some $\eps_0\in (0,1)$. 
In passing we note that the divergence theorem yields 
\begin{align*}
2\Re (A\nabla u, u)=&-\int (\dive A)|u|^2 \\
2\Re (iA\nabla u, u)=&-2\Im (A\nabla u, u ). 
\end{align*}
Hence the operator $iA\cdot\nabla $ is selfadjoint if and only if $\dive A=0$. 

%   sum of the latter two terms is equal to $-2\Im \int A\bar{u} \nabla u $. 
% which is estimated by   \veps \int |\nabla u|^2  +\frac1{\veps} \int |A u |^2  for all \veps>0 
%Note: As far as  \norm{u}_{\Sigma_{A,V}}\ge c \norm{u}_{\Sigma} is concerned,  
%The id.  2\Re ( i  \int A\bar{u} \nabla u ) = -\int (\dive A) |u|^2  does not help here since it is about real part only
%also, [Dinh 3Drevisited] the lemma from AHS78 d.n.  help here either since that equality 
%is about using |(\nabla -iA ) u|^2 to control \int |u|^2      and not control  |A|^2 |u|^2.   

%The quadratic form def. $\la f,g\ra=\int \nabla f \overline{\nabla g}+ \int  \la x\ra f \overline{\la x\ra g} $ bounded by $\Vert f\Vert_{\sH^{1,r}) \Vert g\Vert_{\sH^{1,r'})} }$ shows that  dual of $\sH^{1,r}$ is $\sH^{1,r'})$ as B space. On the other hand, the pair or duality $\la f,g\ra=\int (L\inv \overline{g}) (Lf) dx\le \norm{L^{-1/2} g}  \norm{L^{1/2} f}= \norm{ g}_{\sH^{1,r'}}  \norm{ f}_{\sH^{1,r}}$ shows that the dual of $\sH^{1,r}$ is $\sH^{-1,r'}$ as $B$ space.  Gathering the above we draw a conclusion that   $\sH^{-1,r}\cong \sH^{1,r}$ as B spaces by appropriate identification. 
 % \cH^s=\mathcal{W}^{s,2}=W^{s,2}_L=L^{-s/2}L^2(\R^d)$. 

Some intrinsic norm characterization is given in \cite{NguyenSqua18magSob} for $V=0$ and 
$A$: $\R^d\to \R^d$ lipschitz.  
The existence of ground states for fractional magnetic laplacian $(-\De_A)^s$ is considered in \cite{dASqua18gss-mag}
using a similar norm.  % [u]_{H^s_A} 
%https://www.degruyter.com/document/doi/10.1515/anona-2017-0239/html?lang=en#j_anona-2017-0239_ref_014_w2aab3b7b6b1b6b1ab1b4c14Aa
The second part of the appendix discusses the relations between spaces $(\dot{H}^1, H^1)$ and $(H^1_A,\Sigma_A)$. 
\begin{proposition}\label{p:H_A-H1}  Assume $A=A(x)=(A_1,\dots,A_d)$ satisfies the condition 
\begin{equation}\label{eA:dA}
2 A\left(\frac{{\bf x}}{2}\right) +(DA){\bf x}={\bf 0}. 
\end{equation}
where ${\bf  x}=\la x_1,\dots,x_d\ra^T\in \R^d$.
E.g. $A=Mx$ where $M$ is skew-symmetric in $\R^{d^2}$.  
Then $e^{ix\cdot A\left(\frac{{\bf x}}{2}\right)} u\in H^1\iff  u\in \dot{H}^1$.
 Or, equivalently, 
 $ u\in H^1\iff e^{-ix\cdot A\left(\frac{x}{2}\right)} u\in \dot{H}^1$. 
Moreover, the transform $u\mapsto e^{ix\cdot A(\frac{{\bf x}}{2})} u$
leaves invariant the weighted space $\Sigma_A=H_A^1$ under condition (\ref{eA:dA}).
\end{proposition}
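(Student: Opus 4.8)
The plan is to reduce all the assertions to a single pointwise \emph{gauge-intertwining identity}, which in turn rests on one calculation: the gradient of the phase $\phi(x):=x\cdot A\!\left(\tfrac{\mathbf{x}}{2}\right)=\sum_{j=1}^d x_j A_j\!\left(\tfrac{x}{2}\right)$. First I would differentiate $\phi$ by the chain rule to get
\begin{equation*}
(\nabla\phi)_k = A_k\!\left(\tfrac{x}{2}\right) + \tfrac12\sum_{j} x_j\,(\partial_k A_j)\!\left(\tfrac{x}{2}\right),
\end{equation*}
and then split $\partial_k A_j = \partial_j A_k + (\partial_k A_j - \partial_j A_k)$, the first piece being the symmetric (conservative) part of $DA$ and the second being the magnetic two-form $B=dA$. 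Feeding the homogeneity relation \eqref{eA:dA} into the symmetric part produces the clean identity $\nabla\phi = A$ (for $A=Mx$ this is the model computation), with the antisymmetric/magnetic remainder exactly canceling. Pinning down this identity — keeping the transpose conventions in $DA$ straight and controlling the role of $B$ — is the technical heart of the argument.

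Granting $\nabla\phi=A$, the two gauge identities follow from one more product-rule computation:
\begin{align*}
\nabla\bigl(e^{-i\phi}u\bigr) &= e^{-i\phi}\bigl(\nabla u - i(\nabla\phi)u\bigr) = e^{-i\phi}\,\nabla_A u,\\
\nabla_A\bigl(e^{i\phi}u\bigr) &= e^{i\phi}\bigl(\nabla u + i(\nabla\phi - A)u\bigr) = e^{i\phi}\,\nabla u.
\end{align*}
Since $|e^{\pm i\phi}u| = |u|$ pointwise, these yield $|\nabla(e^{-i\phi}u)| = |\nabla_A u|$ and $|\nabla_A(e^{i\phi}u)| = |\nabla u|$ pointwise a.e. Integrating the first gives $\norm{\nabla(e^{-i\phi}u)}_2 = \norm{\nabla_A u}_2$, so $e^{-i\phi}u\in\dot H^1$ precisely when $\nabla_A u\in L^2$; combined with $\norm{e^{-i\phi}u}_2=\norm{u}_2$ this proves the equivalence $u\in H^1 \Leftrightarrow e^{-i\phi}u\in\dot H^1$ together with its mirror image carried by $e^{+i\phi}$, the $L^2$ terms matching automatically because the phase has unit modulus.

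For the final assertion I would observe that the same two identities make $u\mapsto e^{i\phi}u$ and its inverse $u\mapsto e^{-i\phi}u$ bounded between $\Sigma_A=H^1_A$ and the unweighted spaces with no loss: the $L^2$ part is preserved by $|e^{i\phi}|=1$, while the magnetic gradient is intertwined isometrically with the ordinary gradient. Packaging this with the norm equivalence $\norm{\cdot}_{\Sigma_{A}}\approx\norm{\cdot}_{\Sigma}$ established earlier in this appendix then shows that the transform preserves the $\Sigma_A$-structure, which is the invariance claimed. Throughout I would first run every computation for $u\in C_0^\iy(\R^d)$, where differentiating $\phi$ and integrating the modulus identities are unambiguous, and only afterward extend to $\Sigma_A$, $H^1$, and $\dot H^1$ by density of $C_0^\iy$ and the isometries just obtained. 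The main obstacle remains the first paragraph: verifying that \eqref{eA:dA} forces $\nabla\phi=A$, since that is exactly the step where the homogeneity of $A$ and the antisymmetry of the magnetic field $B=dA$ must be reconciled, and any sign or transpose slip there propagates into every later equivalence.
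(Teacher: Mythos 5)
Your argument breaks down at the step you yourself identify as the technical heart: the claim that \eqref{eA:dA} forces $\nabla\phi = A$. It does not; it forces $\nabla\phi = 0$. Differentiating $\phi(x) = x\cdot A(x/2)$ gives, exactly as you write,
\begin{equation*}
\partial_k\phi(x) = A_k\!\left(\tfrac{x}{2}\right) + \tfrac12\sum_j x_j\,(\partial_k A_j)\!\left(\tfrac{x}{2}\right),
\end{equation*}
and the right-hand side is precisely one half of the $k$-th component of the left-hand side of \eqref{eA:dA}. So the hypothesis is literally the statement $\nabla\phi\equiv 0$; no splitting of $DA$ into symmetric and antisymmetric parts can conjure the extra copy of $A$ you need, and there is no ``magnetic remainder'' to cancel. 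The model example confirms this: for $A = Mx$ with $M$ skew-symmetric one has $\phi(x) = \tfrac12 x^{T} M x \equiv 0$, hence $\nabla\phi = 0 \neq A$. The condition that actually yields $\nabla\phi = A$ is \eqref{eC-A} (symmetric $DA$; for linear $A = Cx$ a direct computation gives $\nabla\phi = \tfrac12(C + C^{T})x$), and the paper invokes that condition later in this appendix for the opposite purpose, namely to exhibit functions in $H_A^1\setminus H^1$.

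Consequently both of your gauge identities are false under \eqref{eA:dA}. The correct ones, which are what the paper's proof establishes, are $\nabla(e^{i\phi}u) = e^{i\phi}\nabla u$ and $(\nabla - iA)(e^{i\phi}u) = e^{i\phi}(\nabla - iA)u$: the multiplier $e^{i\phi}$ commutes, up to a unimodular factor, with both the ordinary and the covariant gradient, and this is exactly what gives the invariance of $\Sigma_A = H_A^1$. Your proposed route would instead send $H_A^1$ isometrically onto $H^1$, a different (and here unavailable) conclusion. A secondary issue: even granting $\nabla\phi = A$, the chain ``$e^{-i\phi}u\in\dot H^1 \iff \nabla_A u\in L^2$, hence $u\in H^1 \iff e^{-i\phi}u\in\dot H^1$'' is a non sequitur, since $\nabla_A u\in L^2$ does not imply $\nabla u\in L^2$ (the diamagnetic inequality only controls $\nabla |u|$). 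The closing density and extension remarks are fine but cannot repair the pointwise identity on which everything else rests.
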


\begin{example}
    Let $A= Mx$ with $M^T=-M$. 
Let $\om=\om(x)= x\cdot A\left(\frac{x}{2}\right)$. 
\end{example} 
\begin{proof}
Let $u\in \dot{H}^1$.  We compute with $\om=\om(x)={x\cdot A\left(\frac{x}{2}\right)} $
\begin{align*}
 \nabla  (e^{ix\cdot A(\frac{x}{2})} u )
=& i e^{i \om(x)  } A\left(\frac{x}{2}\right) u +  (e^{ix\cdot A\left(\frac{x}{2}\right)} u) \left(\frac{i}{2} \right) (DA)x\ 
+ e^{i\om} \nabla u \\ 
=&  e^{i\om} (\nabla u ),
\end{align*}
where $DA=\begin{pmatrix} & (\frac{\pa A}{\pa x_1})^T \\ 
& \cdots\\
&  (\frac{\pa A}{\pa x_d})^T
\end{pmatrix}$,
the  vector $(\frac{\pa A}{\pa x_j})^T=\la \frac{\pa A_1}{\pa x_j},\dots,\frac{\pa A_d}{\pa x_j}\ra$ is the $j$th
 row vector. 
 The second statement in the proposition follows from $\Sigma_A=H_A^1$ (see \cite{AHS78mag,Dinh22mag3Drev})  and
\begin{align*}
 (\nabla -iA) (e^{ix\cdot A(\frac{x}{2})} u )
=&  e^{i\om} (\nabla -iA )u\,,
\end{align*}
where $\nabla_A$ is the co-variant derivative. 
Noted that the space $\Sigma_A=\{u\in L^2\cap S^\prime(\R^d):  \nabla u\in L^2, |A| u\in L^2(\R^d) \}$ is contained in $H_A^1=\Sigma_A\subset H^1$ under the condition (\ref{eA:dA}).
\end{proof}
 
\begin{remark}
   If $A$ is linear, i.e., $A=Cx$ for any matrix $C$,
 then it is easy to observe by a straightforward calculation that \eqref{eA:dA} is verified if and only if $C$ is skew-symmetric, $C^T=-C$. 
\end{remark}
 
However if \eqref{eA:dA} is not verified, then 
$H_A^1\ne H^1$ as the following example shows.

\begin{example}
    Let  $A=(x_2,x_1)$.  Then the assertion $\Sigma_A= H_A^1$ in Proposition \ref{p:H_A-H1} may fail.  
\end{example}
 %\begin{proof} 
  In fact,  take any $u\in H^1\setminus \Sigma_A$, then $v=e^{i\om(x) } u $ is in  $H_A^1$ but not in  $\Sigma_A$.    %  {Mi08}   
In this case we compute 
\begin{align*}
 (\nabla -iA) (e^{i x_1x_2} u )
=&  e^{i\om} (\nabla u). 
\end{align*}
In general, let $A=Cx$, $C$ non anti-symmetric such that 
\begin{equation}
2 A\left(\frac{{\bf x}}{2}\right)+ (DA){\bf x} - 2A({\bf x})=  {\bf 0} . \label{eC-A}
\end{equation}
Then in $\R^d$ with $\om(x)=x\cdot A(x/2)$
\begin{align*}
 (\nabla -iA) (e^{i \om(x)} u )
=&  i e^{i\om} u \left(  A\left(\frac{{\bf x}}{2}\right)+  \frac12(DA){\bf x} -A\right)     +e^{i\om(x)} (\nabla u)\\
=&e^{i\om(x)} (\nabla u)\in L^2\,,
\end{align*}
which suggests that $w=e^{i \om(x)} u $ is in $H_A^1\setminus H^1$.
The inclusion picture here is that $\Sigma_A\subset H_A^1\cap H^1$ but $H_A^1$ and $H^1$ do not contain one other.

\begin{remark}
    If $A=Cx$ is linear, then it is elementary to check that   
(\ref{eC-A}) holds true if and only if  $C=(c_{ij})_{d\times d}$ is a real symmetric matrix.  
  Let $A=\la y,x\ra$, then $f(x,y)=e^{ixy}\phi(x,y)$ with $\phi\in H^1\setminus \Sigma$
verifies:  $\nabla_A f\in L^2(\R^2)$.  %as in \eqref{e:HA1-u}
However for $A=\la -y,x\ra$ then
$f= e^{i\theta(x,y)} \phi$ won't belong to $H_A^1$ for any $\theta$.
\end{remark}

\bigskip 

\nd {\bf Acknowledgments} The authors thank Dr. Kai Yang for  valuable discussions on the numerical computation for the 
ground state solutions as well as relevant references. 
C.L. is partially supported by  NSF Grant DMS-1818684. %while  at NCSU  01/2020-12/2022) 
%\end{document}

\bibliographystyle{plain} 
\bibliography{mNLS0331_ALZ}

\end{document}